\numberwithin{equation}{section}
\newcommand{\eps}[0]{\varepsilon}
\newcommand{\ave}[1]{\langle #1\rangle}
\newcommand{\wt}[1]{\widetilde{#1}}
\newcommand{\QQ}{\mathbb{Q}}
\newcommand{\supp}{{\rm supp}}
\newcommand{\R}{\mathbb R}
\newtheorem{theorem}{Theorem}[section]
\newtheorem*{TheoremLetter}{Theorem A}
\newtheorem{remark}[theorem]{Remark}
\newtheorem{corollary}[theorem]{Corollary}
\newtheorem{proposition}[theorem]{Proposition}
\newtheorem{defi}{\hskip\parindent {Definition}}[section]
\newtheorem{lemma}[theorem]{Lemma}
\newcommand{\ct}[1]{\langle {#1}\rangle \lower.3ex\mathrm{$_{t}$}}
\newcommand{\lt}[1]{[ {#1}] \lower.3ex\mathrm{$_{t}$}}
\DeclareMathOperator{\BMO}{BMO}
\newcommand{\CC}{\mathbb{C}}
\numberwithin{equation}{section}
\begin{document}

	\title[Fractional Bloom boundedness of commutators]{Fractional Bloom boundedness of commutators in spaces of homogeneous type}
	\author{Zhenbing Gong}
	\address{Zhenbing Gong, Department of Applied Mathematics, School of Mathematics and Physics,
		Southwest University of Science and Technology,
		Sichuan 621010, China
	}
	\email{zhenbinggong@swust.edu.cn}
	
	\author{Ji Li}
	\address{Ji Li, Department of Mathematics, Macquarie University, NSW, 2109, Australia}
	\email{ji.li@mq.edu.au}
	
	\author{Jaakko Sinko}
	\address{Jaakko Sinko, Department of Mathematics and Statistics, P.O.B. 68 (Pietari Kalmin katu 5), FI-00014 University of Helsinki, Finland}
	\email{jaakko.sinko@helsinki.fi}

	\date{}

	\bigbreak

	\begin{center}
	\end{center}

	\renewcommand{\thefootnote}{}
	
%
%

	\begin{abstract}
		We aim to characterise boundedness of commutators $[b,T]$ of singular integrals $T$. Boundedness is studied between weighted Lebesgue spaces $L^p(X)$ and $L^q(X)$, $p\leq q$, when the underlying space $X$ is a space of homogeneous type. Commutator theory in spaces of homogeneous type already exist in literature, in particular boundedness results in the setting $p=q$. The purpose here is to extend the earlier results to the setting of $p< q$.
  
        Our methods extend those of Duong et al.\ \cite{DGKLWY2021} and Hyt\"{o}nen et al.\ \cite{HOS2023}. A novelty here is that in order to show the lower bound of the commutator norm, we demonstrate that the approximate weak factorisation of Hyt\"{o}nen can be used when the underlying setting is a space of homogeneous type and not only in the Euclidean setting. The strength of the approximate weak factorisation is that (when compared to the so-called median method) it readily allows complex-valued $b$ in addition to real-valued ones. However, the median method has been previously successfully applied to iterated commutators and thus has its own strengths. We also present a proof based on that method.
		
	\end{abstract}
	
	\maketitle

	\arraycolsep=1pt

    \tableofcontents
    
\section{Introduction}

In the early 1970s, Coifman and Weiss introduced the spaces of homogeneous type \cite{CW1971,CW1977}, which generalize the Euclidean space as a setting for harmonic analysis.
We recall that $(X, d, \mu)$ is a space of homogeneous type if $d\colon X\times X\to [0,\infty)$ is a quasi-metric on $X$, namely,
\begin{itemize}
    \item[(i)] for all $x, y \in X, d(x, y) = d(y, x)$,
    \item[(ii)] for all $x, y \in X, d(x, y) \geq 0$ and $d(x, y) = 0$ if and only if $x = y$,
    \item[(iii)] there exists a constant $A_0 \ge 1$ such that $d(x, y) \le A_0(d(x,z) + d(z, y))$ for all $ x, y,z \in X$,
\end{itemize}
and if $\mu$ is a nonnegative measure defined on the Borel sets of $X$ that satisfies the doubling condition, that is, there exists a
constant $A_1 \ge 1$ such that for all $x\in X,r > 0$,
\begin{align}\label{Bxy}
0<\mu(B(x, 2 r)) \leq A_1 \mu(B(x, r))<\infty,
\end{align}
where $B(x, r):=\{y \in X: d(x, y)<r\}$ is a ball with center $x$ and radius $r$. A subset $U\subset X$ is declared open if for every $x\in U$ there exists $r>0$ such that $B(x,r)\subset U$; this defines the topology (and Borel sets) in $X$.
If $C_{\mu}$ is the smallest constant such that \eqref{Bxy} holds, then we call $\mathbb {Q}:= \log_{2} C_{\mu}$ (the upper dimension of $\mu$) the doubling
order of $\mu$. In fact, \eqref{Bxy} implies that for all $x \in X, \lambda \ge 1$ and $r > 0$,
$$\mu(B(x, \lambda r)) \le A_1 \lambda^\mathbb{Q}\mu(B(x,r)).$$
Throughout this paper we assume that $\mu(X) = \infty$ and that $\mu(\{x_0\}) = 0$ for every $x_0\in X$.

We spare a moment here for a note on the $\mu$-measurable sets. Naturally, for \eqref{Bxy} to make sense, at least all balls $B(x,r)$ have to be measurable. Hence, in our definition of a space of homogeneous type we especially require that all balls are Borel sets. It is very likely that our main results remain valid even if this regularity assumption is somewhat weakened. We postpone elaborating on this topic to Section \ref{discussion}.

Research on the boundedness of commutators of singular integral operators on spaces of homogeneous type will be traced back to Krantz and Li's 2001 paper \cite{KL2001}.  
However, on Euclidean spaces, a great deal of research has been done.
Let $X$ be a space of homogeneous type. For a function $f$ on $X$, the commutator $[b, T]$ of the singular integral operator $T$ with a symbol
$b$, which is defined by
\begin{align*}
[b, T]f(x) = b(x)Tf(x) - T(bf)(x),
\end{align*}
has played a vital role in harmonic analysis, complex analysis, and partial
differential equations. 
It is well-known that Coifman, Rochberg and Weiss \cite{CRW1976} provided a seminal characterization of the boundedness of the commutator $\left[b, R_i\right]$ acting on Lebesgue spaces, where $R_j=\frac{\partial}{\partial x_j} \Delta^{-1 / 2}$ denotes the $j$-th Riesz transform on the Euclidean space $\mathbb{R}^n$. This characterization was established in terms of BMO, extending Nehari's work \cite{Ne1957} on Hankel operators from the complex setting to the real setting of $\mathbb{R}^n$.
See \cite{CHDD2020,CYZ2022,J1978,LLO2024} for some recent and not-so-recent research
development of the commutator $[b,T]$ on the Euclidean space $\mathbb{R}^n$ and \cite{DY2003,PS2007} on the spaces of homogeneous
type.

 In 1985, Bloom \cite{B1985} proves a two-weight extension of the Nehari \cite{Ne1957} result in one dimension. This extension entailed the characterization of weighted BMO  in terms of boundedness of commutators $[b, H]$ in the two-weight setting, where $H$ represents the Hilbert transform on $\mathbb{R}$. Bloom's result has been extended by many scholars. 
In the study commonly referred to as ``of Bloom type'', Holmes--Lacey--Wick \cite{HLW2016,HLW2017} have made significant advancements, providing a characterization of weighted BMO space on $\mathbb{R}^n$ in terms of the boundedness of commutators of Riesz transforms in the two weight setting, by using the methods established by Petermichl \cite{P2000} for the
Hilbert transform, and by Hyt\"{o}nen \cite{H2012} for general Calder\'{o}n--Zygmund operators.
Later, Lerner--Ombrosi--Rivera-R\'{\i}os \cite{LOR2017,LOR2019} characterized the boundedness of commutators of Calder\'{o}n--Zygmund operators with homogeneous kernels $\Omega\left(\frac{x}{|x|}\right) \frac{1}{|x|^n}$  in the two-weight setting. To complete the picture, recently K. Li \cite{L2022} extended the recently well-studied two-weight commutator estimates to the multilinear setting. Further, the second author and his collaborators \cite{DGKLWY2021} considered commutators in the two-weight setting on spaces of homogeneous type. We refer the reader to \cite{DGKLWY2021} for several examples of singular integral operators and their commutators whose natural settings are examples of spaces of homogeneous type beyond the Euclidean setting of $\R^n$ equipped with the Euclidean distance and Lebesgue measure.

Recently, the third author and his collaborators \cite{HOS2023} established the fractional ($p\leq q$) Bloom boundedness of commutators on $\mathbb{R}^n$. Findings are as follows (for notation, see \cite{HOS2023} and Section \ref{section 2} below):
\begin{TheoremLetter}[\cite{HOS2023}]
Let $T$ be a non-degenerate Calder\'{o}n-Zygmund operator with $\omega$ satisfying the Dini condition and let $b \in L_{\mathrm{loc}}^1(\mathbb{R}^n)$. Let $1<p \leq q<\infty$, let $\alpha / n=1 / p-1 / q$, let $\lambda_1 \in A_{p, p}$ and $\lambda_2 \in A_{q, q}$ and let $\nu=\nu_{p, q}$ be the fractional Bloom weight as in Definition \ref{defi1.3}. Then, it holds that
$$
\|[b, T]\|_{L_{\lambda_1}^p(\R^n) \rightarrow L_{\lambda_2}^q(\R^n)} \approx \|b\|_{\mathrm{BMO}_\nu^\alpha(\R^n)}
$$
and
$$
[b, T] \colon L_{\lambda_1}^p(\R^n) \to  L_{\lambda_2}^p(\R^n) \text{ is compact} \quad \text { iff } \quad b \in \mathrm{VMO}_\nu^\alpha(\R^n).
$$
\end{TheoremLetter}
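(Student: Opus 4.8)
\emph{Overall strategy and upper bound.} The plan is to establish the norm equivalence by proving the two inequalities separately and then to read off the compactness dichotomy from a quantitative form of the upper bound together with an approximate-weak-factorisation argument for necessity; the usual a priori integrability reductions that let one assume $b\in\mathrm{BMO}^\alpha_\nu$ (resp.\ $b\in\VMO^\alpha_\nu$) in the ``$\lesssim$'' directions are routine. For the upper bound $\|[b,T]\|_{L^p_{\lambda_1}\to L^q_{\lambda_2}}\lesssim\|b\|_{\mathrm{BMO}^\alpha_\nu}$ I would invoke the standard pointwise sparse domination of commutators with Dini-continuous kernels: for each $f$ there is a sparse family $\mathcal S$ with
$$
|[b,T]f(x)|\ \lesssim\ \sum_{Q\in\mathcal S}\Big(|b(x)-\langle b\rangle_Q|\,\langle|f|\rangle_Q\ +\ \big\langle|b-\langle b\rangle_Q|\,|f|\big\rangle_Q\Big)\mathbf 1_Q(x),
$$
so it suffices to bound the two resulting sparse bilinear forms on $L^p_{\lambda_1}\times(L^q_{\lambda_2})'$. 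Splitting $|b-\langle b\rangle_Q|$ against the fractional Bloom weight $\nu=\nu_{p,q}$ and using that $\langle|b-\langle b\rangle_Q|\rangle_Q\lesssim\|b\|_{\mathrm{BMO}^\alpha_\nu}\langle\nu\rangle_Q$, one reduces to a purely weighted fractional sparse estimate in which $\lambda_1\in A_{p,p}$, $\lambda_2\in A_{q,q}$ and the scaling relation $\alpha/n=1/p-1/q$ combine to give the needed compatibility, by a weighted Carleson-embedding / fractional-integral bound for sparse operators.

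\emph{Lower bound.} Fix a cube $Q$. By non-degeneracy of $T$ there is a companion cube $\widetilde Q$ with $\ell(\widetilde Q)\approx\ell(Q)$ and $\mathrm{dist}(Q,\widetilde Q)\approx\ell(Q)$ on which the kernel keeps a fixed sign and satisfies $|K(x,y)|\gtrsim\ell(Q)^{-n}$ for $(x,y)\in Q\times\widetilde Q$. The approximate weak factorisation then yields, for a suitable bounded $h$ adapted to $Q$, a representation $\mathbf 1_Q(h-\langle h\rangle_Q)=\sum_j g_j\,Tf_j+\mathcal E$ in which the $f_j,g_j$ are essentially normalised bumps on $Q\cup\widetilde Q$ and the error $\mathcal E$ has strictly smaller oscillation, so the construction can be iterated and summed geometrically. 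Pairing with $b$ and choosing $h$ so as to extract the modulus, $\int_Q|b-\langle b\rangle_Q|\,d\mu$ becomes a finite sum of terms $|\langle g_j,[b,T]f_j\rangle|\le\|[b,T]\|_{L^p_{\lambda_1}\to L^q_{\lambda_2}}\|f_j\|_{L^p_{\lambda_1}}\|g_j\|_{(L^q_{\lambda_2})'}$; estimating $\|f_j\|_{L^p_{\lambda_1}}$ and $\|g_j\|_{(L^q_{\lambda_2})'}$ with the help of $\lambda_1\in A_{p,p}$ and $\lambda_2\in A_{q,q}$ produces exactly $\nu(Q)$, up to the correct power of $|Q|$, whence $\nu(Q)^{-1}\int_Q|b-\langle b\rangle_Q|\,d\mu\lesssim\|[b,T]\|$ and therefore $\|b\|_{\mathrm{BMO}^\alpha_\nu}\lesssim\|[b,T]\|$. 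Unlike the median method, this argument never uses an ordering on the values of $b$, so it applies verbatim to complex-valued $b$.

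\emph{Compactness and the main difficulty.} For sufficiency I would first show that $C_c^\infty$ functions are dense in $\VMO^\alpha_\nu$, and that for such $b$ the operator $[b,T]$ is compact from $L^p_{\lambda_1}$ to $L^q_{\lambda_2}$ via a weighted Riesz--Kolmogorov criterion (uniform smallness of the tails at infinity, equicontinuity under small translations, and uniform control near the diagonal, using the kernel regularity and the $A$-properties of $\lambda_1,\lambda_2$); then the quantitative upper bound gives $\|[b-b_k,T]\|\lesssim\|b-b_k\|_{\mathrm{BMO}^\alpha_\nu}\to0$, so $[b,T]$ is a norm limit of compact operators. For necessity, if $b\notin\VMO^\alpha_\nu$ there are $\varepsilon>0$ and cubes $(Q_k)$ realising one of the three failure modes of $\VMO$ --- $\ell(Q_k)\to0$, $\ell(Q_k)\to\infty$, or centres escaping to infinity --- with $\nu(Q_k)^{-1}\int_{Q_k}|b-\langle b\rangle_{Q_k}|\,d\mu\ge\varepsilon$; feeding these into the approximate weak factorisation produces a bounded sequence in $L^p_{\lambda_1}$ whose $[b,T]$-images are $\gtrsim\varepsilon$-separated and eventually escape every fixed compact subset of $L^q_{\lambda_2}$, contradicting compactness. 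I expect the lower bound to be the crux --- carrying out the approximate weak factorisation with controlled errors and tracking the exact cancellation of the $A_{p,p}$, $A_{q,q}$ and $\nu_{p,q}$ factors through the fractional scaling --- together with the large-cube and cube-at-infinity regimes in the necessity of $\VMO$, where one must again exploit the structure of $\nu_{p,q}$ and the weight classes to keep the test functions bounded in $L^p_{\lambda_1}$.
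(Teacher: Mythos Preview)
This statement is Theorem~A, which the paper \emph{cites} from \cite{HOS2023} rather than proves; the paper's own contributions are Theorems~\ref{thm1} and~\ref{thm2}, which extend the boundedness half of Theorem~A to spaces of homogeneous type, while the compactness half is explicitly deferred to a follow-up paper. So strictly speaking there is no ``paper's own proof'' of Theorem~A to compare against.

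That said, if one reads Sections~\ref{section 3} and~\ref{section 4} as a proof of the boundedness equivalence in the special case $X=\R^n$, your plan for that part is essentially the same as the paper's. Upper bound: sparse domination of $[b,T]$ (Lemma~\ref{lemmaA2.1}), then a reduction via Lemma~\ref{LemmaA2.2} and Lemma~\ref{lemmaS2} to the fractional sparse form $\mathcal{A}_{\lambda_1,\lambda_2}^{p,q}$, bounded by duality and $\ell^q\subset\ell^p$. Lower bound: approximate weak factorisation exactly as you describe, with one difference worth noting. You propose to iterate the factorisation and sum geometrically; the paper instead iterates \emph{exactly twice} (Lemma~\ref{lemma:hytonenlemma2.5xdmu} for $K$, then once more for $K^*$) so that the residual $\tilde{\tilde f}$ lands back in $L^\infty_0(E)$ and can be absorbed into the left-hand oscillation in one step (Proposition~\ref{prop:oikarisawfformulationforcalderonzygmundxdmu}). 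Both approaches work; the two-step version avoids tracking a convergent series and gives only two pairing terms. Also, your claim that the weighted norms of the test bumps ``produce exactly $\nu(Q)$'' should read $\nu(Q)^{1+\alpha/n}$: one first gets $\lambda_1^p(\widetilde B)^{1/p}\lambda_2^{-q'}(B)^{1/q'}$, then uses the $A_{p,p}$ doubling to replace $\widetilde B$ by $B$, and finally Lemma~\ref{lemmaS2} to pass to $\nu(B)^{1+\alpha/n}$.

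Your compactness outline (density of smooth $b$ in $\VMO_\nu^\alpha$, weighted Riesz--Kolmogorov for sufficiency, three failure modes plus factorisation-based separated sequences for necessity) is the standard route and matches what \cite{HOS2023} does, but there is nothing in the present paper to compare it to.
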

Then, it is natural to study the following question: Can one establish the characterisation
of boundedness of commutators in the fractional Bloom setting in terms of the related weighted
BMO space for Calder\'{o}n-Zygmund operators $T$ in spaces of homogeneous type? In this paper, we provide a affirmative response to the unresolved issue and the primary outcome can be articulated as follows:

\begin{theorem}\label{thm1}
 Let $1<p \leq q<\infty$, $\frac{\alpha}{\mathbb Q}=\frac{1}{p}-\frac{1}{q}$, $\lambda_1 \in A_{p, p}$ and $\lambda_2 \in A_{q, q}$ and let $\nu=\nu_{p, q}$ be the fractional Bloom weight as in Definition \ref{defi1.3}. Suppose $b\in \BMO_\nu^\alpha(X)$. Then for any Calder\'{o}n-Zygmund operator $T$ as in Definition \ref{defi1.2}
with $\omega$ satisfying the Dini condition, there exists a positive constant $C$ such that
$$
\|[b, T]\|_{L_{\lambda_1}^p(X) \rightarrow L_{\lambda_2}^q(X)} \leq C \|b\|_{\BMO_\nu^\alpha(X)}.
$$
\end{theorem}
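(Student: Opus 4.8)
The plan is to run the sparse-domination argument, transplanting the Euclidean scheme of \cite{HOS2023} (and the $p=q$ homogeneous-type scheme of \cite{DGKLWY2021}) to a general space of homogeneous type. Since $\|b\|_{\BMO_\nu^\alpha(X)}<\infty$ by hypothesis and since bounded functions of bounded support are dense in both weighted Lebesgue spaces, it suffices to estimate $|\langle [b,T]f,g\rangle|$ for such $f,g$. First I would equip $X$ with the Hyt\"onen--Kairema family of adjacent dyadic systems $\mathcal D^1,\dots,\mathcal D^N$, $N=N(A_0,A_1)$, for which every ball of $X$ is contained in some cube $Q\in\mathcal D^t$ with $\mu(Q)\lesssim\mu(B)$. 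Using the Dini condition on $\omega$ and Lerner's formula, exactly as in \cite{DGKLWY2021,HOS2023}, one produces sparse subfamilies $\mathcal S_t\subset\mathcal D^t$ (depending on $f,g$) with
$$
|\langle [b,T]f,g\rangle|\ \lesssim\ \sum_{t=1}^{N}\sum_{Q\in\mathcal S_t}\mu(Q)\Big(\langle|b-\langle b\rangle_Q|\,|f|\rangle_Q\,\langle|g|\rangle_Q\ +\ \langle|f|\rangle_Q\,\langle|b-\langle b\rangle_Q|\,|g|\rangle_Q\Big),
$$
where $\langle h\rangle_Q=\mu(Q)^{-1}\int_Q h\,d\mu$. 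Everything thus reduces to bounding each of the two bilinear sparse forms, uniformly over sparse families, by $C\|b\|_{\BMO_\nu^\alpha(X)}\,\|f\|_{L^p_{\lambda_1}(X)}$ times the norm of $g$ in the dual of $L^q_{\lambda_2}(X)$.

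For one form, say $B_{\mathcal S}(f,g)=\sum_{Q\in\mathcal S}\mu(Q)\,\langle|b-\langle b\rangle_Q|\,|f|\rangle_Q\,\langle|g|\rangle_Q$, the next step is to absorb the oscillation of $b$. The Bloom weight $\nu=\nu_{p,q}$ of Definition \ref{defi1.3} lies in $A_\infty$ --- a consequence of $\lambda_1\in A_{p,p}$, $\lambda_2\in A_{q,q}$, $\alpha/\mathbb Q=1/p-1/q$ and the defining relation for $\nu$, via a H\"older-inequality computation as in the Euclidean case --- so a weighted John--Nirenberg inequality on $X$ together with a generalized H\"older inequality in Orlicz ($\exp L$--$L\log L$) spaces gives
$$
\langle|b-\langle b\rangle_Q|\,|f|\rangle_Q\ \lesssim\ \|b\|_{\BMO_\nu^\alpha(X)}\,\mu(Q)^{\alpha/\mathbb Q}\,\langle\nu\rangle_Q\,\|f\|_{L\log L,Q}.
$$
Substituting, $B_{\mathcal S}(f,g)$ is controlled by $\|b\|_{\BMO_\nu^\alpha(X)}$ times the purely fractional sparse form $\sum_{Q\in\mathcal S}\mu(Q)^{1+\alpha/\mathbb Q}\,\langle\nu\rangle_Q\,\|f\|_{L\log L,Q}\,\langle|g|\rangle_Q$, and the remaining task is the two-weight estimate for this form, which is exactly where the hypotheses $\alpha/\mathbb Q=1/p-1/q$, $\lambda_1\in A_{p,p}$, $\lambda_2\in A_{q,q}$ are used: one runs the standard Carleson-embedding / stopping-time argument for fractional sparse operators in the Bloom setting (the Euclidean computation of \cite{HOS2023} transfers once cubes are replaced by Hyt\"onen--Kairema cubes, Lebesgue measure by $\mu$, and the volume powers $|Q|^{\gamma}$ by $\mu(Q)^{\gamma}$ via the doubling order $\mathbb Q$; the $L\log L$ averaging is handled by an $M^2$-type maximal bound). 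The second bilinear form is treated identically, with the roles of the $L^p_{\lambda_1}$ and the dual-$L^q_{\lambda_2}$ sides interchanged and the oscillation of $b$ now landing on $g$. Combining and summing over $t=1,\dots,N$ yields the asserted bound.

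The genuinely new point --- and the step I expect to demand the most care --- is not the arithmetic of the sparse estimates but the verification that every Euclidean ingredient above has a faithful analogue on an \emph{arbitrary} space of homogeneous type under only the standing assumptions ($\mu(X)=\infty$, $\mu(\{x_0\})=0$, balls Borel) and a genuine quasi-metric: the construction and covering properties of the adjacent dyadic systems, the equivalence of the ball-defined and dyadic-defined versions of $\|\cdot\|_{\BMO_\nu^\alpha(X)}$, the weighted John--Nirenberg self-improvement on $X$, the membership $\nu_{p,q}\in A_\infty$ with the attendant reverse-H\"older/Fujii--Wilson facts driving the Carleson embedding, and the survival of the Lerner sparse-domination step for $[b,T]$ itself in this generality (essentially carried out for $p=q$ in \cite{DGKLWY2021}). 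A secondary delicacy is the bookkeeping of exponents, so that the $L\log L$ average of $f$ and the reverse-H\"older exponent of $\nu$ remain compatible with mapping $L^p_{\lambda_1}(X)$; this is where one must lean on the precise structure of the Bloom weight rather than on $\nu\in A_\infty$ alone. Each ingredient is available in the literature in some form; assembling them into one coherent chain is the substantive content, after which Theorem \ref{thm1} follows from the estimates above.
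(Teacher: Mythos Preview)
Your overall framework---sparse pointwise domination of $[b,T]$ via \cite{DGKLWY2021}, followed by a two-weight estimate of the resulting bilinear sparse forms---is exactly the paper's route. The substantive difference lies in how you propose to handle the oscillation factor $\langle|b-\langle b\rangle_Q|\,|f|\rangle_Q$.

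The paper does \emph{not} invoke weighted John--Nirenberg or any Orlicz-space H\"older inequality. Instead, it applies a second sparse-domination lemma (also from \cite{DGKLWY2021}): for an enlarged sparse family $\tilde{\mathscr S}\supset\mathscr S$ one has $|b(x)-\langle b\rangle_Q|\lesssim\sum_{P\in\tilde{\mathscr S},\,P\subset Q}\Omega(b,P)\chi_P(x)$ pointwise a.e.\ on $Q$. Combined with the elementary per-cube bound $\Omega(b,P)\,\mu(P)\lesssim\|b\|_{\BMO_\nu^\alpha}\,\lambda_1^p(P)^{1/p}\lambda_2^{-q'}(P)^{1/q'}$ (Lemma~\ref{lemmaS2}), this reduces everything to the fractional sparse operator
\[
\mathcal A_{\lambda_1,\lambda_2}^{p,q}(f;\tilde{\mathscr S})=\sum_{P\in\tilde{\mathscr S}}\frac{\lambda_1^p(P)^{1/p}\lambda_2^{-q'}(P)^{1/q'}}{\mu(P)}\langle|f|\rangle_P\chi_P,
\]
whose $L^p_{\lambda_1}\to L^q_{\lambda_2}$ bound follows from one line of duality, H\"older, $\|\cdot\|_{\ell^q}\le\|\cdot\|_{\ell^p}$, and the standard Carleson embedding. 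No $L\log L$ averages, no $M^2$, no John--Nirenberg self-improvement.

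Your route is in principle workable but is both longer and has a slip: the weighted John--Nirenberg argument you sketch (localise to $Q$, observe that $b\in\BMO_\nu^\alpha$ implies the local $\BMO_\nu$-constant on $Q$ is at most $\nu(Q)^{\alpha/\mathbb Q}\|b\|_{\BMO_\nu^\alpha}$, then apply the $A_\infty$ John--Nirenberg for $\BMO_\nu$) actually yields
\[
\langle|b-\langle b\rangle_Q|\,|f|\rangle_Q\ \lesssim\ \|b\|_{\BMO_\nu^\alpha}\,\langle\nu\rangle_Q^{\,1+\alpha/\mathbb Q}\,\mu(Q)^{\alpha/\mathbb Q}\,\|f\|_{L\log L,Q},
\]
with exponent $1+\alpha/\mathbb Q$ on $\langle\nu\rangle_Q$, not $1$ as you wrote. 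With this correction the coefficient collapses to $\nu(Q)^{1+\alpha/\mathbb Q}/\mu(Q)$ and, via Lemma~\ref{lemmaS2}, to the same weight factor $\lambda_1^p(Q)^{1/p}\lambda_2^{-q'}(Q)^{1/q'}/\mu(Q)$ that the paper reaches directly---except that you now carry an $L\log L$ average of $f$ and must bound an iterated sparse/maximal object, which is exactly the ``secondary delicacy'' you flag. The paper's sparse-expansion-of-oscillation device sidesteps this entirely and makes the final estimate elementary; it is worth knowing as the cleaner mechanism here.
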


\begin{theorem}\label{thm2}  Suppose  $b\in L^1_{loc}(X)$. Suppose $T$ is a Calder\'{o}n-Zygmund operator as in Definition \ref{defi1.2} with $\omega$ satisfying the Dini condition and suppose $T$ satisfies the ``non-degenerate" condition \eqref{eq-1.1}.
 Let $1<p \leq q<\infty$, $\frac{\alpha}{\mathbb Q}=\frac{1}{p}-\frac{1}{q}$, $\lambda_1 \in A_{p, p}$ and $\lambda_2 \in A_{q, q}$ and let $\nu=\nu_{p, q}$ be the fractional Bloom weight as in Definition \ref{defi1.3}. If $[b,T]$ is a bounded operator from $L_{\lambda_1}^p(X)$ to $L_{\lambda_2}^q(X)$, then $b\in \BMO_\nu^\alpha(X)$ and there exists a positive constant $C$ such that
$$
\|b\|_{\BMO_\nu^\alpha(X)}\leq C\|[b, T]\|_{L_{\lambda_1}^p(X) \rightarrow L_{\lambda_2}^q(X)}.
$$
\end{theorem}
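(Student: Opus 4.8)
The aim is Theorem \ref{thm2}, the lower bound. Write $\mathcal N:=\|[b,T]\|_{L^p_{\lambda_1}(X)\to L^q_{\lambda_2}(X)}$ and assume $\mathcal N<\infty$. Since $\|b\|_{\BMO_\nu^\alpha(X)}$ is the supremum over balls $B$ of the local oscillations $\frac{1}{\mu(B)^{\alpha/\mathbb Q}\,\nu(B)}\int_B|b-\ave{b}_B|\,d\mu$ (Definition \ref{defi1.3}), and since $\int_B|b-\ave{b}_B|\,d\mu\le 2\int_B|b-c|\,d\mu$ for every constant $c$, it suffices to produce, for each fixed $B=B(x_0,r)$, a constant $c_B$ with $\int_B|b-c_B|\,d\mu\lesssim \mathcal N\,\mu(B)^{\alpha/\mathbb Q}\,\nu(B)$; we take $c_B=\ave{b}_{\wt B}$ for a companion ball $\wt B$. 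For this, first invoke the non-degeneracy hypothesis \eqref{eq-1.1}: there is $\wt B=B(\bar x,r)$ with $d(x_0,\bar x)\approx r$ on which the Calder\'on--Zygmund kernel $K$ of $T$ is bounded below and of constant phase, in the precise sense of \eqref{eq-1.1}, for $x\in B$ and $y\in\wt B$ (roughly $|K(x,y)|\gtrsim\mu(B)^{-1}$). As $\mu(X)=\infty$ such a ball exists, $B$ and $\wt B$ are disjoint and engulfed in a fixed dilate of $B$, and doubling gives $\mu(\wt B)\approx\mu(B)$, $\lambda_i(\wt B)\approx\lambda_i(B)$.

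The core step, and the one demanding work beyond \cite{HOS2023} (whose Euclidean argument rests on translations, dilations and smooth bumps), is to carry over Hyt\"onen's approximate weak factorisation to $(X,d,\mu)$ using only doubling, the quasi-triangle inequality with constant $A_0$, and the Dini modulus $\omega$. The outcome we need is: there are $\mu$-measurable $g$ supported in $\wt B$ and $h$ supported in $B$, each bounded by $1$ in modulus, so that pairing reproduces the oscillation,
$$
\int_B\big(b-\ave{b}_{\wt B}\big)\,\overline{\operatorname{sgn}\!\big(b-\ave{b}_{\wt B}\big)}\,d\mu \;=\; \frac{1}{c_0}\,\ave{[b,T]g,\,h} \;+\; \mathrm{(Err)},
$$
with $c_0\neq0$ structural and $\mathrm{(Err)}$ controlled by $\mathcal N$ against the same weighted norms as the main term, so that $\mathrm{(Err)}$ is handled rather than absorbed into $\|b\|$. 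Here $g$ is built from $\operatorname{sgn}(\overline{b-\ave{b}_{\wt B}})$ on $B$ transported to $\wt B$ through $K$; the constant-phase lower bound makes $\mathbf 1_B\cdot Tg$ pointwise comparable to a fixed nonzero multiple of $\mu(B)^{-1}$, and the ``approximate'' mechanism — an iteration in which the residual of $\int_{\wt B}K(x,\cdot)\,d\mu$ about its leading value at each stage is re-expanded, the tail summing geometrically thanks to the kernel regularity — removes the obstruction that this integral is not exactly constant in $x\in B$. The points to check by hand in the homogeneous setting are: (a) the bumps can be taken $\mu$-measurable with the stated support and size; (b) their $T$-images admit the required pointwise control on $B$; (c) the iteration closes, i.e.\ the residual terms form a convergent series of commutator pairings. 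Points (b)--(c), where the absence of translations and dilations forces every auxiliary ball to be taken with room to spare and the Dini estimate to be applied at each scale, are the main obstacle.

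From the displayed identity, $\int_B|b-\ave{b}_{\wt B}|\,d\mu\lesssim|\ave{[b,T]g,h}|+|\mathrm{Err}|\lesssim\mathcal N\,\|g\|_{L^p_{\lambda_1}(X)}\,\|h\|_{L^{q'}_{\lambda_2^{-1}}(X)}$. Since $g,h$ are unit-bounded bumps on $\wt B$, $B$, one has $\|g\|_{L^p_{\lambda_1}}\lesssim(\int_B\lambda_1^{p})^{1/p}$ and $\|h\|_{L^{q'}_{\lambda_2^{-1}}}\lesssim(\int_B\lambda_2^{-q'})^{1/q'}$, so it remains to verify the purely quantitative inequality
$$
\Big(\int_B\lambda_1^{p}\Big)^{1/p}\Big(\int_B\lambda_2^{-q'}\Big)^{1/q'} \;\lesssim\; \mu(B)^{\alpha/\mathbb Q}\,\nu(B),
$$
which is exactly what the definition of the fractional Bloom weight $\nu=\nu_{p,q}$, the memberships $\lambda_1\in A_{p,p}$, $\lambda_2\in A_{q,q}$, and the balance $\alpha/\mathbb Q=1/p-1/q$ are arranged to yield — a H\"older-type computation identical in form to its Euclidean counterpart in \cite{HOS2023}. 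Combining, $\int_B|b-\ave{b}_{\wt B}|\,d\mu\lesssim\mathcal N\,\mu(B)^{\alpha/\mathbb Q}\,\nu(B)$, hence $\int_B|b-\ave{b}_B|\,d\mu\lesssim\mathcal N\,\mu(B)^{\alpha/\mathbb Q}\,\nu(B)$, and taking the supremum over $B$ gives $\|b\|_{\BMO_\nu^\alpha(X)}\lesssim\mathcal N$.

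Finally, as the paper advertises, we also record the route via the median method: when $b$ is real-valued one bypasses the factorisation lemma by splitting $B$ along a median of $b$, exploiting the constant phase of $K$ on $B\times\wt B$ on each half to obtain one-sided pointwise control of $[b,T]$, and estimating directly. This argument is the one that extends to iterated commutators, and we include it as a second proof.
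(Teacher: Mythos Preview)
Your overall architecture matches the paper's: approximate weak factorisation as the main route, the median method as a second proof for real-valued $b$, and the final reduction to a weight inequality via Lemma~\ref{lemmaS2}. However, there is a concrete error and a structural imprecision.

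\textbf{The concrete error.} You misstate the $\BMO_\nu^\alpha$ seminorm: it is
\[
\|b\|_{\BMO_\nu^\alpha(X)}=\sup_B\frac{1}{\nu(B)^{1+\alpha/\QQ}}\int_B|b-\ave{b}_B|\,d\mu,
\]
with $\nu(B)^{\alpha/\QQ}$, not $\mu(B)^{\alpha/\QQ}$, in the normalisation. Consequently, the ``purely quantitative inequality'' you say must be checked,
\[
\lambda_1^p(B)^{1/p}\lambda_2^{-q'}(B)^{1/q'}\lesssim \mu(B)^{\alpha/\QQ}\,\nu(B),
\]
is false in general (it would force $\nu(B)\lesssim\mu(B)$). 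The correct statement, and what Lemma~\ref{lemmaS2} actually gives, is $\lambda_1^p(B)^{1/p}\lambda_2^{-q'}(B)^{1/q'}\lesssim_{[\lambda_1],[\lambda_2]}\nu(B)^{1+\alpha/\QQ}$. With this fix the argument closes as you intend.

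\textbf{The structural imprecision.} Your description of the factorisation --- a single pairing $\langle[b,T]g,h\rangle$ plus an error ``controlled by $\mathcal N$'', with residuals ``re-expanded'' into a \emph{convergent series} of commutator pairings --- is not what the paper does and is somewhat garbled. In the paper one writes $f=gTh-hT^*g+\tilde f$ once (Lemma~\ref{lemma:hytonenlemma2.5xdmu}), then applies the same step to $\tilde f$ with $K^*$ in place of $K$ and the roles of $B,\wt B$ swapped, producing a second commutator pairing and a remainder $\tilde{\tilde f}\in L_0^\infty(E)$ with $\|\tilde{\tilde f}\|_{L^\infty}\lesssim\eps\|f\|_{L^\infty}$. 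The error term $\langle b,\tilde{\tilde f}\rangle=\langle b-\ave{b}_E,\tilde{\tilde f}\rangle$ is then \emph{absorbed into the left-hand oscillation} by choosing $\eps$ small; it is not bounded by $\mathcal N$. The upshot is exactly two pairings (Proposition~\ref{prop:awfformulationforinvnondegcalderonzygmund}), not one and not infinitely many. Your description of $g$ as ``built from $\operatorname{sgn}(\overline{b-\ave{b}_{\wt B}})$ transported through $K$'' is also off: in the paper $g_1=\chi_{\wt E}$ is a plain indicator, and it is $h_1=-f/T^*g_1$ that carries the dependence on $b$ through $f$. An infinite-series version can be made to work, but it is not the argument here, and your sketch does not make clear how the series of pairings would be summed against the weighted norms.
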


In a follow-up paper, we will characterize the compactness of $[b,T]\colon L_{\lambda_1}^p(X) \rightarrow L_{\lambda_2}^q(X)$. This will complete the generalisation of the main results of \cite{HOS2023} to the setting of spaces of homogeneous type.

In the proof of Theorem A in \cite{HOS2023}, the constant $C$ for the bound 
\[
\|[b, T]\|_{L_{\lambda_1}^p(\R^n) \rightarrow L_{\lambda_2}^q(\R^n)} \leq C\|b\|_{\mathrm{BMO}_\nu^\alpha(\R^n)}
\]
may be tracked. We refer the reader to \cite{HLS2023} for a constant $C$ whose dependence on the weight characteristics $[\lambda_1]_{A_{p,p}}$ and $[\lambda_2]_{A_{q,q}}$ is an improvement over that of \cite{HOS2023}.

This paper is organised as follows. In Section \ref{section 2}, we give some notations and recall the necessary preliminaries
on spaces of homogeneous type. In Section \ref{section 3}, we obtain the upper bound of the commutator on spaces of homogeneous type, i.e., Theorem \ref{thm1}, by using the sparse pointwise domination. In Section \ref{section 4}, we provide the lower bound of the commutator, i.e., Theorem \ref{thm2}, by using the median method or approximate weak factorisation. 

Throughout the paper, we denote by $C$ positive constants that may vary from line to line and depend at most on the parameters that are considered fixed. If $f \le Cg$ or $f\ge Cg$,we then write $f\lesssim g$ or $f\gtrsim g$; and if $f\lesssim g \lesssim f$, we write $f\approx g$. If we want to emphasise the parameters that the constant $C$ depends at most on, we write $f\lesssim_{A_0} g$ or $f\lesssim C_{A_0} g$, for example.

\section{Notation and preliminaries on spaces of homogeneous type}\label{section 2}

\subsection{Notation}The most used notation is summarized in the following table:

\begin{tabular}{c p{0.7\textwidth}}
$X$ & A space of homogeneous type. \\
$d$ & The quasi-metric of the space of homogeneous type $X$. \\
$\mu$ & The doubling measure of the space of homogeneous type $X$. \\
$p'$ & Conjugate exponent of $p\in(1,\infty)$: $\frac{1}{p}+\frac{1}{p'}:=1$.\\
$\alpha$ & Exponent defined by $\frac{\alpha}{\QQ}:=\frac{1}{p}-\frac{1}{q}$.\\
$\lambda_1$ & An $A_{p,p}$ weight in $X$.  \\
$\lambda_2$ & An $A_{q,q}$ weight in $X$.  \\
$\nu$ & Bloom weight defined by
$
\nu := (\lambda_1 / \lambda_2)^\frac{1}{1/p+1/q'}.
$ \\

$\chi_E$ & Indicator function of the set $E\subset X$. \\
$\ave{f}_E$ & Average: $\ave{f}_E:=\frac{1}{\mu(E)}\int_E f(x) d \mu(x).$ \\
$\langle f,g \rangle$ & Integral pairing: $\langle f,g \rangle:=\int_X f(x)g(x) d \mu(x)$. \\
$w(E)$ & $w(E):=\langle w, \chi_E\rangle$. \\
$\|b\|_{\BMO_{\nu}^{\alpha}(X)}$ & $\|b\|_{\BMO_{\nu}^{\alpha}(X)}:=\sup_{B}\frac{1}{\nu(B)^{\frac{\alpha}{\mathbb Q}}}\bigg(\frac{1}{\nu(B)}\int_{B}|b(x)-\ave{b}_{B}|d\mu(x) \bigg)$ \\
$\|f\|_{L^p_w(X)}$ & $\|f\|_{L^p_w(X)}:=\left(\int_{X} |f(x)w(x)|^p d \mu(x) \right)^{1/p}$. \\
$K^*$ & $K^*(x,y):=K(y,x)$ \\
\end{tabular}

\bigskip

\noindent For a $\mu$-measurable set $E\subset X$, we denote 
\begin{align*}
&L^0(E):=\{f\colon E\to \CC : f \text{ is measurable}\}, \\
&L^1(X):=\{f\in L^0(X) : \int_{X}|f| d \mu<\infty\}, \\
&L^1_\mathrm{loc}(X):=\{f\in L^0(X) : \int_{B}|f| d \mu<\infty \text{ for all balls } B \text{ in } X\}, \\
&L^\infty(X):=\{f\in L^0(X) : f \text{ is bounded} \}, \\
&L^\infty(E):=\{f\in L^\infty(X) : f=\chi_Ef\}, \\
&L_0^\infty(E):=\{f\in L^\infty(E)\cap L^1(X) : \int_E f d \mu = 0\}, \\
&L_+^\infty(E):=\{f\in L^\infty(E) : f(x)\geq 0 \text{ for every } x\in X\}.
\end{align*}
If $f\in L^\infty(X)$, we write 
\[
\|f\|_{L^\infty(X)}:=\sup_{x\in X}|f(x)|
\]
Furthermore, we set 
\begin{align*}
&L^\infty_{\rm bs}(X):=\{f\in L^\infty(X) : \supp(f) \text{ is bounded}\} \\
&L^1_{\rm bs}(X):=\{f\in L^1(X) : \supp(f) \text{ is bounded}\}.
\end{align*}

\subsection{On quasi-metrics}
Suppose for this subsection, that we consider just a non-empty quasi-metric space $(X,d)$ (a set $X$ equipped with a quasi-metric), that does not necessarily have any extra structure required of a space of homogeneous type.

If $A$ and $B$ are non-empty subsets of $X$, we set $d(A,B):=\inf\{d(a,b) : a\in A, b\in B\}$. A subset $A\subset X$ is called \emph{bounded}, if it is included in some ball. 

The quasi-metric $d$ induces a topology of $X$ when we define: $U\subset X$ is \emph{open} if for every $x\in U$ there exists $r>0$ such that $B(x,r)\subset U$. We note that a ball of a quasi-metric space may in general fail to be open. A \emph{closure} of a subset $A\subset X$ is the closure of $A$ in the topology induced to $X$ by the quasi-metric, that is, the closure of $A$ is the set of points in $X$ whose every open neighbourhood in $X$ intersects $A$. The \emph{support} of a function $f\colon X \to \CC$ is then defined to be the closure of the set $\{x\in X : f(x)\neq 0\}$.

A result of Mac{\'\i}as and Segovia (\cite[Theorem 2]{MS1979} says that each quasi-metric $d$ on $X$ is \emph{equivalent} (refer to \cite{MS1979}) to a quasi-metric $\rho$ on $X$ which has the property that balls with respect to $\rho$ are open. (Note that the topologies induced by $d$ and $\rho$ are the same.) Shifting our focus back to the original space $(X,d)$, we get the immediate corollary that the center point $x$ of each $d$-ball $B(x,r)$ is an interior point of that ball. This fact in turn has the following natural lemmas as corollaries (that are possibly also provable ``directly'' from the definition of a quasi-metric):

\begin{lemma}\label{lemma:closureisballclosure}
Let $A\subset X$. The closure of $A$ is 
\[
\{x\in X \mid \forall r>0\colon B(x,r)\cap A\neq\emptyset\}.
\]
\end{lemma}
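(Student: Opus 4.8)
The plan is to prove that the closure of $A$ (in the topology induced by the quasi-metric $d$) coincides with the set $S:=\{x\in X \mid \forall r>0\colon B(x,r)\cap A\neq\emptyset\}$ by a standard double inclusion, using only the one nontrivial input recalled just above the statement: that the center of every $d$-ball is an interior point of that ball (equivalently, via Mac\'\i as--Segovia, that $d$ is equivalent to a quasi-metric whose balls are open, inducing the same topology).

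\emph{Inclusion $S\subset \overline{A}$.} Let $x\in S$ and let $U$ be any open neighbourhood of $x$. By definition of the topology, there is $r>0$ with $B(x,r)\subset U$. Since $x\in S$, $B(x,r)\cap A\neq\emptyset$, hence $U\cap A\neq\emptyset$. As $U$ was arbitrary, $x$ lies in the closure of $A$.

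\emph{Inclusion $\overline{A}\subset S$.} Let $x\in\overline{A}$ and let $r>0$. Here is the only place where the Mac\'\i as--Segovia-type fact is needed: a ball need not itself be open, but the center $x$ is an interior point of $B(x,r)$, so there is an open set $U$ with $x\in U\subset B(x,r)$. Since $x$ is in the closure of $A$, every open neighbourhood of $x$ meets $A$, so $U\cap A\neq\emptyset$, whence $B(x,r)\cap A\neq\emptyset$. As $r>0$ was arbitrary, $x\in S$.

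Combining the two inclusions gives $\overline{A}=S$, which is the assertion. I do not anticipate a genuine obstacle here: the only subtlety — and the reason the lemma is stated at all rather than treated as a triviality — is that in a general quasi-metric space balls may fail to be open, so one cannot directly use "$B(x,r)$ is an open neighbourhood of $x$"; the fix is precisely the quoted corollary that $x$ is always an interior point of $B(x,r)$, which lets one interpolate an honest open set. Everything else is the textbook argument that a point is in the closure iff every basic neighbourhood meets the set.
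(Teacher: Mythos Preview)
Your proof is correct and follows exactly the route the paper indicates: the paper does not spell out a proof but presents the lemma as an immediate corollary of the Mac\'\i as--Segovia consequence that the center of every $d$-ball is an interior point of that ball, and this is precisely the single nontrivial input you invoke in the inclusion $\overline{A}\subset S$. The other inclusion is, as you note, just the definition of the quasi-metric topology.
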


\begin{lemma}\label{lemma:posdistanceimpliesseparation}
Suppose $\emptyset\neq A,B\subset X$. If $d(A,B)>0$, then the closures of $A$ and $B$ are disjoint.
\end{lemma}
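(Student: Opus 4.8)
The plan is to argue by contradiction, using the explicit description of the closure furnished by Lemma \ref{lemma:closureisballclosure}. Write $\delta := d(A,B)$, which is strictly positive by hypothesis, and let $A_0\ge 1$ be the quasi-triangle inequality constant from the definition of the quasi-metric $d$. Suppose, for contradiction, that some point $x\in X$ lies simultaneously in the closure of $A$ and in the closure of $B$.

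Next I would choose a radius $r>0$ small enough to absorb the factor $A_0$; concretely, take $r:=\delta/(2A_0)$, so that $2A_0 r\le\delta$. Since $x$ is in the closure of $A$, Lemma \ref{lemma:closureisballclosure} lets me pick a point $a\in B(x,r)\cap A$; since $x$ is in the closure of $B$, the same lemma lets me pick $b\in B(x,r)\cap B$. Then $d(a,x)<r$ and $d(x,b)<r$, so by the quasi-triangle inequality and the symmetry of $d$,
\[
d(a,b)\le A_0\bigl(d(a,x)+d(x,b)\bigr)<2A_0 r\le\delta=d(A,B).
\]
But $a\in A$ and $b\in B$, so by the very definition of $d(A,B)$ as an infimum we have $d(a,b)\ge d(A,B)$, a contradiction. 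Hence no point lies in both closures, i.e.\ the closures of $A$ and $B$ are disjoint.

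I do not expect any real obstacle here: the argument is a short contradiction. The only point requiring a little care is the choice of $r$ — in a genuine metric space one could take $r=\delta/2$, but the quasi-triangle inequality forces the extra factor $A_0$ and hence $r=\delta/(2A_0)$. The proof also relies on Lemma \ref{lemma:closureisballclosure}, which itself rests on the Mac\'ias--Segovia theorem guaranteeing a topologically equivalent quasi-metric with open balls; once that lemma is available, the present statement follows immediately.
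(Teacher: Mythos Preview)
Your proof is correct and follows exactly the route the paper indicates: the paper states this lemma without detailed proof, merely noting it is a corollary of Lemma~\ref{lemma:closureisballclosure} (itself a consequence of the Mac\'ias--Segovia regularisation), which is precisely the input you use. Your handling of the quasi-triangle constant via $r=\delta/(2A_0)$ is the only subtlety, and you treat it correctly.
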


\begin{lemma}\label{lemma:closureofboundedisbounded}
If $A\subset X$ is bounded, then the closure of $A$ is bounded.
\end{lemma}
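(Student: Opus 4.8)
The plan is to reduce the statement to the quasi-triangle inequality, using the concrete description of the closure furnished by Lemma \ref{lemma:closureisballclosure}. Since $A$ is bounded, fix a point $x_0\in X$ and a radius $r>0$ with $A\subset B(x_0,r)$; the goal is then simply to exhibit a single (slightly larger) ball, centred at $x_0$, that contains the closure of $A$.

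Let $x$ be any point of the closure of $A$. By Lemma \ref{lemma:closureisballclosure} the ball $B(x,r)$ meets $A$, so we may choose $a\in B(x,r)\cap A$. Then $d(x,a)<r$, and since $a\in A\subset B(x_0,r)$ we also have $d(x_0,a)<r$. The quasi-triangle inequality (with constant $A_0$) now yields
\[
d(x_0,x)\le A_0\bigl(d(x_0,a)+d(a,x)\bigr)<2A_0 r,
\]
so $x\in B(x_0,2A_0 r)$. As $x$ was an arbitrary point of the closure of $A$, this closure is contained in $B(x_0,2A_0 r)$ and is therefore bounded.

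I expect no real obstacle here. The only point that requires any care — that balls in a quasi-metric space need not be open, so that a priori the closure of $A$ could be strictly larger than one naively expects — has already been absorbed into Lemma \ref{lemma:closureisballclosure}, which is exactly what lets us find points of $A$ inside any ball around a boundary point of $A$. The dilation factor $2A_0$ is forced solely by the weakening of the triangle inequality to the quasi-triangle inequality; over a genuine metric space one could take the ball $B(x_0,r')$ for any $r'>r$.
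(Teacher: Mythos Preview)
Your proof is correct and entirely in the spirit of the paper. The paper does not give an explicit argument for this lemma; it simply records it as one of several immediate corollaries of the fact (via Mac{\'\i}as--Segovia) that every centre is an interior point of its ball, and notes that these lemmas are ``possibly also provable `directly' from the definition of a quasi-metric.'' Your argument is precisely such a direct proof, routed through Lemma~\ref{lemma:closureisballclosure} and the quasi-triangle inequality, so there is nothing to add.
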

Note how the first lemma says in particular that even though it is possible that some balls are not open, the balls and open sets are interchangeable in some contexts.

\subsection{Geometric doubling property}
It is well-known that the doubling property of the measure for spaces of homogeneous type implies the following geometric doubling property of the quasi-metric (see e.g. \cite[p. 67]{CW1971} for a related deduction):
\begin{lemma}\label{lemma:geometricdoubling}
    Let $X$ be a space of homogeneous type. Then there exists a constant $N$ such that each ball $B(x,r)$ can be covered by at most $N$ balls of radius $r/2$.
\end{lemma}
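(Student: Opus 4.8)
The plan is to run the classical volume-packing (Vitali-type) argument, carefully tracking the quasi-metric constant $A_0$. Fix a ball $B(x,r)$. First I would select a subset $\{y_1,\dots,y_M\}\subset B(x,r)$ that is \emph{maximal} among all $(r/2)$-separated subsets of $B(x,r)$, i.e. $d(y_i,y_j)\ge r/2$ whenever $i\neq j$, with $M$ as large as possible. That such a maximal (indeed maximum-cardinality) set exists is guaranteed by the counting bound obtained below, which applies to \emph{any} $(r/2)$-separated subset of $B(x,r)$ and shows that all of them are finite of size at most $N$. Maximality then gives the covering for free: for every $z\in B(x,r)$ there must be an index $i$ with $d(z,y_i)<r/2$ (otherwise $\{y_1,\dots,y_M,z\}$ would be a larger $(r/2)$-separated subset), hence $B(x,r)\subset\bigcup_{i=1}^{M}B(y_i,r/2)$.

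It remains to bound $M$ by a constant depending only on $A_0$, $A_1$ and $\mathbb{Q}$. The key observation is that the shrunken balls $B(y_i,r/(4A_0))$ are pairwise disjoint: if a point lay in both $B(y_i,r/(4A_0))$ and $B(y_j,r/(4A_0))$, the quasi-triangle inequality would force $d(y_i,y_j)< A_0\bigl(r/(4A_0)+r/(4A_0)\bigr)=r/2$, contradicting separation. Since $A_0\ge 1$ and $y_i\in B(x,r)$, each $B(y_i,r/(4A_0))$ is contained in $B(x,2A_0 r)$, while conversely $B(x,2A_0 r)\subset B(y_i,3A_0^2 r)$. Applying the iterated doubling estimate from \eqref{Bxy} with dilation factor $12A_0^3=(3A_0^2 r)/(r/(4A_0))\ge 1$ yields $\mu\bigl(B(y_i,3A_0^2 r)\bigr)\le A_1(12A_0^3)^{\mathbb{Q}}\,\mu\bigl(B(y_i,r/(4A_0))\bigr)$, and therefore $\mu\bigl(B(y_i,r/(4A_0))\bigr)\ge A_1^{-1}(12A_0^3)^{-\mathbb{Q}}\,\mu\bigl(B(x,2A_0 r)\bigr)$ for every $i$. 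Summing over the disjoint balls and using $\bigsqcup_i B(y_i,r/(4A_0))\subset B(x,2A_0 r)$ gives $M\,A_1^{-1}(12A_0^3)^{-\mathbb{Q}}\,\mu\bigl(B(x,2A_0 r)\bigr)\le \mu\bigl(B(x,2A_0 r)\bigr)$; since \eqref{Bxy} forces $0<\mu\bigl(B(x,2A_0 r)\bigr)<\infty$, we may divide and conclude $M\le A_1(12A_0^3)^{\mathbb{Q}}=:N$.

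There is no genuine obstacle here; the only points requiring attention are the bookkeeping of $A_0$ in the chain of ball inclusions (each step using only $A_0\ge 1$) and the remark that \eqref{Bxy} implies every ball has strictly positive and finite measure, so that the volume comparison is not vacuous. I would close by noting that this is essentially the deduction of Coifman and Weiss \cite[p.~67]{CW1971}.
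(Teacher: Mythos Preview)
Your proof is correct. The volume-packing argument is carried out cleanly: the disjointness of the balls $B(y_i,r/(4A_0))$, the inclusions $B(y_i,r/(4A_0))\subset B(x,2A_0 r)\subset B(y_i,3A_0^2 r)$, and the doubling bound are all verified with the right bookkeeping of $A_0$, and your remark that the counting bound applies to \emph{any} $(r/2)$-separated subset (not just a maximal one) resolves any apparent circularity in the existence of the maximal set.

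As for comparison with the paper: the paper does not actually supply a proof of this lemma. It simply records the statement as well known and points to \cite[p.~67]{CW1971} for a related deduction. Your argument is precisely the classical one alluded to there, so you have filled in what the paper left to the reference.
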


\subsection{(Non-degenerate) $\omega$-Calder\'{o}n-Zygmund kernels on spaces of homogeneous type}

\bigskip

\begin{defi}\label{defi1.1} A function $K\colon (X \times X) \setminus \{(x,x): x \in X\} \to \CC$ is called a $\omega$-Calder\'{o}n-Zygmund kernel if it satisfies the following estimates: for all $x \neq y$,
\begin{equation}\label{eq:kernelsizeassumption}
|K(x, y)| \leq \frac{c_K}{V(x, y)}
\end{equation}
and for $d\left(x, x^{\prime}\right) < \left(2 A_0\right)^{-1} d(x, y)$,
$$
\left|K(x, y)-K\left(x^{\prime}, y\right)\right|+\left|K(y, x)-K\left(y, x^{\prime}\right)\right| \leq \frac{1}{V(x, y)} \omega\left(\frac{d\left(x, x^{\prime}\right)}{d(x, y)}\right)
$$
where $V(x, y)=\mu(B(x, d(x, y)))$ and $\omega\colon [0,1] \rightarrow[0, \infty)$ is continuous, increasing, subadditive, $\omega(0)=0$. By the doubling condition we have that $V(x, y) \approx V(y, x)$.
\end{defi}

We say that $\omega$ satisfies the Dini condition if
\begin{align}
\|\omega\|_{Dini}=\int_{0}^{1}\omega(t)\frac{dt}{t}<\infty.
\end{align}

A kernel $K$ is said to be non-degenerate, if there are positive constants $c_0$ and $\bar{C}$ such that for every $x \in X$ and $r>0$, there exists $y \in B(x, \bar{C} r) \backslash B(x, r)$ satisfying
\begin{align}\label{eq-1.1}
|K(x, y)| \geq \frac{1}{c_0 \mu(B(x, r))}.
\end{align}

\subsection{The singular integral operator on spaces of homogeneous type}

 \begin{defi}\label{defi1.2} We say that $T$ is a Calder\'{o}n-Zygmund operator on $(X, d, \mu)$ if $T$ is bounded on $L^2(X)$ and has the $\omega$-Calder\'{o}n-Zygmund kernel $K(x, y)$ such that 
 \begin{equation}\label{eq:defofsingularintegral}
 Tf(x)=\int_X K(x, y) f(y) d \mu(y)
 \end{equation}
 for any $x\notin \supp(f)$.

\end{defi}

Note that for $f\in L^2(X)$, the integral representation \eqref{eq:defofsingularintegral} is absolutely convergent for any $x\notin \supp(f)$. This follows from the kernel size assumption \eqref{eq:kernelsizeassumption}. Namely, given such an element $x$ there exists a radius $r>0$ such that $B(x,r)\subset f^{-1}(\{0\})$ (note Lemma \ref{lemma:closureisballclosure}). Then $|K(x,y)|\leq c_K V(x,y)^{-1}\leq c_K \mu(B(x,r))^{-1}=:\beta$ for all $y\notin B(x,r)$. We have
\begin{align}
    \int_X|K(x,y)f(y)| d \mu(y) &\leq \|f\|_{L^2(X)}\Big(\int_{X\setminus B(x,r)} |K(x,y)|^2 d \mu(y) \Big)^{1/2} \\
    &=\|f\|_{L^2(X)}\Big(2 \cdot \int_{0}^{\beta} t \cdot \mu(\{y\in X\setminus B(x,r) : |K(x,y)| > t\}) dt \Big)^{1/2}.
\end{align}
Given that $\beta<\infty$, the following weak-type inequality for $K$ (in a single variable) finishes the proof of the convergence of the integral:
\[
d_K(t) := \mu\big(\{y\in X\setminus\{x\} : |K(x,y)|>t \}\big)\lesssim_{c_K,C_\mu} t^{-1} \quad \text{for } t>0.
\]
The general idea for a proof of this inequality is contained in the proof of \cite[Theorem 3]{MS1979}. (For the proof, we may assume that $d_K(t)>0$.) However, a simplification of that idea suffices for our purposes. Namely: Because $|K(x,y)|>t$ implies that $\mu(B(x,d(x,y)))=V(x,y) < c_K/t$ and because $V(x,y)\to \mu(X)=\infty$ as $d(x,y)\to \infty$, it must hold that the set of such $y$ is included in a ball centred at $x$ with some radius. If $R=R(c_K,t)$ is the infimum of such radii, then after choosing some $y(t)$ relatively close to the boundary of this smallest ball (say $d(x,y(t)) > R/2$; compare with \cite{MS1979}), it follows that all $y$ in consideration are contained in a ball centred at $x$ with radius $\approx d(x,y(t))$. The doubling property finishes the proof because
\[
\mu(B(x,cd(x,y(t))))\lesssim_{C_\mu} V(x,y(t))\lesssim_{c_K}|K(x,y(t))|^{-1}<\frac{1}{t}.
\]
In fact, with similar components, one can prove that for $f\in \cup_{1\leq p<\infty}L^p(X)$ and $x\notin \supp(f)$, it holds that
\begin{equation}\label{eq:regularityofkernel}
    \int_{X}|K(x,y)f(y)| d \mu(y)<\infty,
\end{equation}
as a consequence of the kernel size assumption.

\subsection{Commutators}\label{commutsdefinition}
The space $L^\infty_{\rm bs}(X)$ of bounded functions that have bounded support is dense in $L^p(w)$ for any non-negative $w\in L^1_\mathrm{loc}(X)$ and $1<p<\infty$.

Let us note that a Calder\'{o}n-Zygmund operator $T\colon L^2(X)\to L^2(X)$ with $\|\omega\|_{Dini}<\infty$ has a unique bounded continuation $T\colon L^1(X)\to L^{1,\infty}(X)$ that satisfies
\[
\|T\|_{L^1(x)\to L^{1,\infty}(X)} \lesssim_{C_\mu,A_0} (\|T\|_{L^1(x)\to L^{1,\infty}(X)}+\|\omega\|_{Dini}).
\]
Also, for $f\in L^1(X)$ and $x\notin \supp(f)$, it holds that
\[
Tf(x) = \int_X K(x,y) f(y) d \mu(y),
\]
where $K$ is the kernel associated with $T\colon L^2(X)\to L^2(X)$.

A few words about the proof of the existence of this continuation: Because the measurable sets in $X$ are exactly the Borel sets, the standard proof with a Whitney decomposition of the level set of the maximal function combined with the Calder\'{o}n-Zygmund decomposition of a function gives the proof. We refer to the book \cite[Chapter I]{Ste1993}. However, a space of homogeneous type equipped with a quasi-metric with some non-open balls do not directly fall under these methods. (For example: Is the level set of the maximal function open then?) Anyway, one circumvents this problem, for example, by proving the weak-type $(1,1)$ of $T$ first when the quasi-metric has open balls and then using the regularization result of \cite[Theorem 2]{MS1979} to obtain the weak-type $(1,1)$ of $T$ for all spaces. Here it is noteworthy that if $T$ is a Calder\'{o}n-Zygmund operator with $\|\omega\|_{Dini}<\infty$ in a given space of homogeneous type, then substituting the quasi-metric with an equivalent one with open balls preserves this property of $T$ in the resulting new space of homogeneous type.

For $f\in L^\infty_{\rm bs}(X)$ and $b\in L^1_\mathrm{loc}(X)$, it holds that $bf\in L^1(X)$. Therefore we make the following definition of the commutator $[b,T]$ in the dense set $L^\infty_{\rm bs}(X)$:

\begin{defi}\label{defi:czcommutator}
Suppose $T$ is a Calder\'{o}n-Zygmund operator on $(X,d,\mu)$ with $\|\omega\|_{Dini}<\infty$. Let $b\in L^1_\mathrm{loc}(X)$. We define the \emph{commutator} $[b,T]$ on $f\in L^\infty_{\rm bs}(X)$ by setting
\[
[b,T]f := bTf-T(bf).
\]
\end{defi}

\subsection{A System of Dyadic Cubes}
In a space of homogeneous type  $(X, d, \mu)$, a countable family $\mathscr{D}:=\cup_{k \in \mathbb{Z}} \mathscr{D}_k, \mathscr{D}_k:=\left\{Q_\alpha^k: \alpha \in \mathscr{A}_k\right\}$, of Borel sets $Q_\alpha^k \subseteq X$ is called a system of dyadic cubes with parameters $\delta \in(0,1)$ and $0<a_1 \leq A_1<\infty$ if it has the following properties:
$$
X=\bigcup_{\alpha \in \mathscr{A}_k} Q_\alpha^k \quad \text { (disjoint union) for all } k \in \mathbb{Z} \text {; }
$$
if $\ell \geq k$, then either $Q_\beta^{\ell} \subseteq Q_\alpha^k$ or $Q_\alpha^k \cap Q_\beta^{\ell}=\emptyset$;
for each $(k, \alpha)$ and each $\ell \leq k$, there exists a unique $\beta$ such that $Q_\alpha^k \subseteq Q_\beta^{\ell}$;
for each $(k, \alpha)$ there exists at most $M$ (a fixed geometric constant) $\beta$ such that
$$
\begin{array}{r}
Q_\beta^{k+1} \subseteq Q_\alpha^k, \text { and } Q_\alpha^k=\underset{Q \in \mathscr{D}_{k+1}, Q \subseteq Q_\alpha^k}{\bigcup} Q ; \\
B\left(x_\alpha^k, a_1 \delta^k\right) \subseteq Q_\alpha^k \subseteq B\left(x_\alpha^k, A_1 \delta^k\right)=: B\left(Q_\alpha^k\right) ; \\
\text { if } \ell \geq k \text { and } Q_\beta^{\ell} \subseteq Q_\alpha^k \text {, then } B\left(Q_\beta^{\ell}\right) \subseteq B\left(Q_\alpha^k\right) .
\end{array}
$$
The set $Q_\alpha^k$ is called a dyadic cube of generation $k$ with centre point $x_\alpha^k \in Q_\alpha^k$ and sidelength $\delta^k$.
From the properties of the dyadic system above and from the doubling measure, we can deduce that there exists a constant $C_{\mu, 0}$ depending only on $C_\mu$ as in (1.2) and $a_1, A_1$ as above, such that for any $Q_\alpha^k$ and $Q_\beta^{k+1}$ with $Q_\beta^{k+1} \subset Q_\alpha^k$,
$$
\mu\left(Q_\beta^{k+1}\right) \leq \mu\left(Q_\alpha^k\right) \leq C_{\mu, 0} \mu\left(Q_\beta^{k+1}\right) .
$$

\subsection{Sparse Operators on Spaces of Homogeneous Type}
 Let $\mathscr {D}$ be a system of dyadic cubes on $X$. Given $0 < \eta< 1$, a collection $\mathscr{S} \subset \mathscr {D}$ of dyadic cubes is said to be $\eta$-sparse
provided that for every $Q \in \mathscr{S}$, there is a measurable subset $E_Q \subset Q$ such that $\mu(E_Q) \ge \eta\mu(Q)$
and the sets $\{E_Q\}_{Q\in \mathscr{S}}$ have only finite overlap.
	
 \begin{defi}\label{defi2.1} 
 Given $0<\eta<1$ and an $\eta$-sparse family $\mathscr{S}\subset \mathscr{D}$ of dyadic cubes. The sparse operator $\mathcal{A}_{\mathscr{S}}f(x)$ is defined by 
 \begin{align}  \mathcal{A}_{\mathscr{S}}f(x):=\sum_{Q\in\mathscr{S}}\langle f\rangle_{Q}\chi_{Q}(x).
 \end{align}
At the same time, the two weight fractional sparse operator is defined by 
$$\mathcal{A}_{\lambda_1, \lambda_2}^{p, q}(f ; \mathscr{S})=\sum_{P \in \mathscr{S}} \frac{\lambda_1^p(P)^{\frac{1}{p}} \lambda_2^{-q^{\prime}}(P)^{\frac{1}{q^{\prime}}}}{\mu(P)}\langle f\rangle_P \chi_P$$
 
\end{defi}

\subsection{Muckenhoupt weights}

\begin{defi}\label{defi:apweightsmuckenhoupt}
Let $w\colon X \to (0,\infty)$ be a locally integrable function and $1<p<\infty$.

We say that $w$ is an $A_p$ weight, written $w\in A_p$, if 
\[
[w]_{A_p}:=\sup_B \frac{\big(\int_B w d \mu\big) \big(\int_B w^{-\frac{1}{p-1}} d \mu \big)^{p-1}}{\mu(B)^p} < \infty,
\]
where the supremum is taken over all balls $B\subset X$.

We say that $w$ is an $A_{p,p}$ weight, written $w\in A_{p,p}$, if $w^p\in A_p$. Then we write $[w]_{A_{p,p}}:=[w^p]_{A_p}^{1/p}$.
\end{defi}

It is a direct consequence of the above definition of an $A_p$ weight $w$ and H\"{o}lder's inequality that whenever $B$ is a ball and $E\subset B$ satisfies $\mu(E)>0$, then
\[
w(B) \leq [w]_{A_p}\bigg(\frac{\mu(B)}{\mu(E)}\bigg)^p w(E)
\]
In particular, if $x\in X$, $r>0$, $\lambda > 1$, then
\[
w(B(x,\lambda r)) \leq [w]_{A_p}\bigg(\frac{\mu(B(x,\lambda r)}{\mu(B(x,r))}\bigg)^p w(B(x,r)) \leq [w]_{A_p} C_\mu^p \lambda^{\QQ p} w(B(x,r)).
\]
Thus the doubling of $\mu$ induces doubling of any $A_p$ weight $w$ in the above sense.

\subsection{On the fractional Bloom weight}
\begin{defi}\label{defi1.3}
Given two weights $\lambda_1, \lambda_2$ and exponents $1<p, q<\infty$, we define the Bloom weight
$$
\nu=\nu_{p, q}=(\lambda_1 / \lambda_2)^{\frac{1}{1 / p+1 / q^{\prime}}}=(\lambda_1 / \lambda_2)^{\frac{1}{1+\alpha / \mathbb Q}}, \quad \text{ where} \quad \frac{\alpha}{\mathbb Q}=\frac{1}{p}-\frac{1}{q}.
$$
\end{defi}

\subsection{The space of functions of weighted fractional bounded mean oscillation $\BMO^{\alpha}_{w}$}

\begin{defi}
For a positive weight $w$ and a parameter $\alpha\in \mathbb{R}$,  a function $b\in L^1_{\mathrm{loc}}(X)$ belongs to $\BMO_{w}^{\alpha}(X)$ if
\begin{align}
\|b\|_{\BMO_{w}^{\alpha}(X)}:=\sup_{B} \mathcal{M}_{w}^{\alpha}(b,B):=\sup_{B}\frac{1}{w(B)^{\frac{\alpha}{\mathbb Q}}}\bigg(\frac{1}{w(B)}\int_{B}|b(x)-\langle b \rangle_{B}|d\mu(x) \bigg) < \infty,
\end{align}
where $\langle b \rangle_{B}:=\frac{1}{\mu(B)}\int_{B}b(x)d\mu(x)$ and the supremum is taken over all balls $B\subset X$.
\end{defi}

\begin{defi}\label{def2.2}
Let $0<\eta<1$. Let $\mathscr{D}$ be a dyadic system in $X$ and let $\mathscr{S} \subset \mathscr{D}$ be a $\eta$-sparse family. Let $\lambda_1, \lambda_2$ be weights and $b \in L_{loc}^1\left(X\right)$ and define
$$
\|b\|_{\BMO_{\lambda_1, \lambda_2}^{p, q}(\mathscr{S})}=\sup _{Q \in \mathscr{S}} \frac{1}{\lambda_1^p(Q)^{\frac{1}{p}} \lambda_2^{-q^{\prime}}(Q)^{\frac{1}{q'}}} \int_Q\left|b-\langle b\rangle_Q\right| d \mu
$$
\end{defi} 

According to the proof of \cite[Proposition 3.1]{HOS2023}, we have the following  relationship between $\BMO_{\lambda_1, \lambda_2}^{p, q}(\mathscr{S})$ and $\BMO_{\nu}^{\alpha}$ functions on X.

\begin{lemma}\label{lemmaS2}
Suppose that $1<p, q<\infty$ and $\lambda_1 \in A_{p, p}, \lambda_2 \in A_{q, q}$ and let $\alpha / \mathbb{Q}=1 / p-1 / q$. Let $\nu$ be the fractional Bloom weight as in Definition \ref{defi1.3}. Then, it holds that
\[
1\leq \frac{\lambda_1^p(B)^{1/p}\lambda_2^{-q'}(B)^{1/q'}}{\nu(B)^{1+\alpha/\QQ}}\leq [\lambda_1]_{A_{p,p}}[\lambda_2]_{A_{q,q}}
\]
for all balls $B\subset X$. Also, $\nu\in A_s$, where
\[
s := 1 + \frac{1/{p'}+1/q}{1/p+1/{q'}} = \frac{2}{1+\alpha/{\QQ}}.
\]
and we have $[\nu]_{A_s}^{1/p+1/{q'}}\leq [\lambda_1]_{A_{p,p}}[\lambda_2]_{A_{q,q}}$.

If additionally $0<\eta<1$, $\mathscr{D}$ is a dyadic system in $X$, $\mathscr{S} \subset \mathscr{D}$ is an $\eta$-sparse family, and $b \in L_{\mathrm{loc}}^1\left(X\right)$, then
$$
\|b\|_{\BMO_\nu^\alpha(X)} \gtrsim\|b\|_{\BMO_{\lambda_1, \lambda_2}^{p, q}(\mathscr{S})} .
$$
\end{lemma}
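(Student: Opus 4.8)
The plan is to run the computation behind the proof of \cite[Proposition 3.1]{HOS2023}, checking that every step uses only H\"older's inequality together with the definitions of the $A_{p,p}$ and $A_{q,q}$ conditions and of the doubling property, each of which is available verbatim in a space of homogeneous type. Put $\beta:=(1/p+1/q')^{-1}$, so that $\nu=\lambda_1^{\beta}\lambda_2^{-\beta}$ and $1+\alpha/\QQ=1/p+1/q'=1/\beta$, and $\gamma:=(1/p'+1/q)^{-1}$, so that $s-1=\tfrac{1-\alpha/\QQ}{1+\alpha/\QQ}=\beta/\gamma$ and $s=\tfrac{2}{1+\alpha/\QQ}$. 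Since $\mu(B)^{1/p+1/q'}=\mu(B)^{1+\alpha/\QQ}$, the two-sided estimate is, after cancelling this common power of $\mu(B)$, equivalent to
\[
1\ \le\ \ave{\lambda_1^p}_B^{1/p}\,\ave{\lambda_2^{-q'}}_B^{1/q'}\,\ave{\nu}_B^{-(1+\alpha/\QQ)}\ \le\ [\lambda_1]_{A_{p,p}}[\lambda_2]_{A_{q,q}},
\]
where $\ave{\cdot}_B$ denotes the $\mu$-average over $B$. For the left inequality I would write $\nu=(\lambda_1^p)^{\beta/p}(\lambda_2^{-q'})^{\beta/q'}$, note $\beta/p+\beta/q'=1$, apply H\"older, and raise to the power $1/\beta$. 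For the right inequality I would first record, from the definitions of the $A_{p,p}$ and $A_{q,q}$ constants, that $\ave{\lambda_1^p}_B^{1/p}\le[\lambda_1]_{A_{p,p}}\ave{\lambda_1^{-p'}}_B^{-1/p'}$ and $\ave{\lambda_2^{-q'}}_B^{1/q'}\le[\lambda_2]_{A_{q,q}}\ave{\lambda_2^q}_B^{-1/q}$, and then apply H\"older to the identity $1=\nu^{1/s}(\lambda_1^{-p'})^{\beta/(sp')}(\lambda_2^{q})^{\beta/(sq)}$ (whose exponents sum to $1$ because $1+\beta/p'+\beta/q=s$); raising the outcome to the powers $s$ and then $1/\beta$ gives $\ave{\lambda_1^{-p'}}_B^{-1/p'}\ave{\lambda_2^{q}}_B^{-1/q}\le\ave{\nu}_B^{1/\beta}$, and combining the displays closes the case. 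In particular $\ave{\nu}_B<\infty$, so $\nu$ is a genuine weight.

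For $\nu\in A_s$ and the associated norm bound I would use the same mechanism. Write $\nu=(\lambda_1^p)^{\beta/p}(\lambda_2^{-q'})^{\beta/q'}$ and $\nu^{-1/(s-1)}=\lambda_2^{\gamma}\lambda_1^{-\gamma}=(\lambda_2^{q})^{\gamma/q}(\lambda_1^{-p'})^{\gamma/p'}$, the exponents summing to $1$ in both cases. Applying H\"older to $\ave{\nu}_B$ and to $\ave{\nu^{-1/(s-1)}}_B$ and using $s-1=\beta/\gamma$ (hence $(s-1)\gamma/p'=\beta/p'$ and $(s-1)\gamma/q=\beta/q$) yields
\[
\ave{\nu}_B\,\big(\ave{\nu^{-1/(s-1)}}_B\big)^{s-1}\ \le\ \big(\ave{\lambda_1^p}_B^{1/p}\ave{\lambda_1^{-p'}}_B^{1/p'}\big)^{\beta}\big(\ave{\lambda_2^q}_B^{1/q}\ave{\lambda_2^{-q'}}_B^{1/q'}\big)^{\beta},
\]
and the two factors on the right are at most $[\lambda_1]_{A_{p,p}}$ and $[\lambda_2]_{A_{q,q}}$ by definition; taking the supremum over balls $B$ gives $[\nu]_{A_s}\le([\lambda_1]_{A_{p,p}}[\lambda_2]_{A_{q,q}})^{\beta}<\infty$, and raising to the power $1/\beta=1/p+1/q'$ is the claimed bound.

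Finally, for the $\BMO$-seminorm comparison only the left inequality of the first part is needed, and since it rests on H\"older alone it holds with $B$ replaced by any $\mu$-measurable set, in particular by a cube $Q\in\mathscr{S}$, giving $\nu(Q)^{1+\alpha/\QQ}\le\lambda_1^p(Q)^{1/p}\lambda_2^{-q'}(Q)^{1/q'}$. Hence for $Q\in\mathscr{S}$ the quotient in Definition \ref{def2.2} is dominated by $\nu(Q)^{-(1+\alpha/\QQ)}\int_Q|b-\ave{b}_Q|\,d\mu$, and it remains to pass from $Q$ to the ball $B(Q)=B(x_\alpha^k,A_1\delta^k)\supseteq Q$. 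Using $\int_Q|b-\ave{b}_Q|\,d\mu\le 2\int_Q|b-\ave{b}_{B(Q)}|\,d\mu\le 2\int_{B(Q)}|b-\ave{b}_{B(Q)}|\,d\mu\le 2\,\nu(B(Q))^{1+\alpha/\QQ}\|b\|_{\BMO_\nu^\alpha(X)}$, the inclusions $B(x_\alpha^k,a_1\delta^k)\subseteq Q\subseteq B(Q)$, and the doubling of the $A_s$ weight $\nu$ (the estimate recorded after Definition \ref{defi:apweightsmuckenhoupt}), which gives $\nu(B(Q))\le[\nu]_{A_s}C_\mu^{s}(A_1/a_1)^{\QQ s}\nu(Q)$, one obtains $\int_Q|b-\ave{b}_Q|\,d\mu\lesssim\nu(Q)^{1+\alpha/\QQ}\|b\|_{\BMO_\nu^\alpha(X)}$; dividing by $\lambda_1^p(Q)^{1/p}\lambda_2^{-q'}(Q)^{1/q'}$ and taking the supremum over $Q\in\mathscr{S}$ finishes the proof. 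I expect no genuine difficulty here: the only points requiring care are the exponent bookkeeping — checking that each application of H\"older involves conjugate exponents and that the identities $1+\alpha/\QQ=1/\beta$ and $s-1=\beta/\gamma$ make all powers cancel — and the routine ball/cube comparison, which is precisely why one establishes $\nu\in A_s$, and hence its doubling, before the $\BMO$ comparison. Note also that the sparseness of $\mathscr{S}$ is not used in this direction.
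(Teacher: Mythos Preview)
Your proof is correct and follows essentially the same route the paper intends: the paper does not give its own argument but simply says ``According to the proof of \cite[Proposition 3.1]{HOS2023}'', and you have reproduced that computation, checking that every step (H\"older, the $A_{p,p}$/$A_{q,q}$ definitions, doubling of $A_s$ weights, the ball/cube comparison via $B(x_\alpha^k,a_1\delta^k)\subseteq Q\subseteq B(Q)$) carries over verbatim to a space of homogeneous type. The exponent bookkeeping is right, and your observation that only the left inequality (pure H\"older) is needed for the $\BMO$ comparison, together with the doubling of $\nu$ obtained from $\nu\in A_s$, is exactly the intended mechanism.
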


\section{Proof of Theorem \ref{thm1}}\label{section 3}

In order to obtain the proof of Theorem \ref{thm1}. we recall that Duong et. al. (\cite{DGKLWY2021}) have obtained the sparse pointwise domination of the commutator on a space of homogeneous type.

\begin{lemma}[\cite{DGKLWY2021}]\label{lemmaA2.1}
Let $T$ be the Calder\'on-Zygmund operator as in Definition \ref{defi1.2} with $\omega$ satisfying the Dini condition and let $b\in L^1_\mathrm{loc}(X)$. For every $f\in L^{\infty}(X)$ with bounded support, there exist $\mathcal{T}$ dyadic systems $\mathscr{D}^{t}$, $t=1,2,\ldots,\mathcal{T}$ and $\eta$-sparse families $\mathscr{S}_{t}\subset \mathscr{D}^{t}$ such that for a.e. $x\in X$,
\begin{align}
|[b,T]f(x)|&\le C\bigg(\sum_{t=1}^{\mathcal{T}}\sum_{Q\in \mathscr{S}_t}|b(x)-\langle b \rangle_Q|\bigg(\frac{1}{\mu(Q)}\int_{Q}|f(z)|d \mu(z)\bigg)\chi_{Q}(x)\\&\quad+ \sum_{t=1}^{\mathcal{T}}\sum_{Q\in \mathscr{S}_t}\bigg(\frac{1}{\mu(Q)}\int_{Q}|b(z)-\langle b\rangle_{Q}||f(z)|d\mu(z)\bigg)\chi_{Q}(x)\bigg)\nonumber\\
&:=C \sum_{t=1}^{\mathcal{T}}\left(\mathcal{A}_{b,\mathscr{S}_t}|f|(x)+\mathcal{A}^*_{b,\mathscr{S}_t}|f|(x) \right).\nonumber   
\end{align}
\end{lemma}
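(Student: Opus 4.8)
The plan is to adapt the sparse-domination scheme of Lerner to the setting of a space of homogeneous type, which is exactly the argument carried out in \cite{DGKLWY2021}.

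The first step is a reduction to a \emph{local} estimate. Fix one of finitely many adjacent dyadic systems $\mathscr D^t$ and a cube $Q_0\in\mathscr D^t$. It suffices to produce a $\tfrac12$-sparse family $\mathscr F\subseteq\{Q\in\mathscr D^t:Q\subseteq Q_0\}$ such that, for a.e.\ $x\in Q_0$,
\[
\bigl|[b,T]\bigl(f\chi_{B(Q_0)}\bigr)(x)\bigr|\ \lesssim\ \sum_{Q\in\mathscr F}\Bigl(\,|b(x)-\ave{b}_{B(Q)}|\,\ave{|f|}_{B(Q)}+\ave{|b-\ave{b}_{B(Q)}|\,|f|}_{B(Q)}\Bigr)\,\chi_Q(x).
\]
Granting this, the global estimate follows: since $f$ has bounded support, $\supp f$ lies in a single dyadic cube, so $f=f\chi_{B(Q_0)}$ on $X$ once $Q_0$ is taken large enough; applying the local estimate along the increasing chain of dyadic ancestors of that cube and using $\mu(X)=\infty$ (so that $\ave{|f|}_{B(Q_0)}\to0$ as $Q_0$ grows, whence the stopping families stabilise) yields a single sparse family valid a.e.\ on $X$. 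Finally, each enclosing ball $B(Q)$ on the right is contained in a comparable dyadic cube of one of the adjacent systems $\mathscr D^1,\dots,\mathscr D^{\mathcal T}$; relabelling accordingly and regrouping the cubes by the system they come from produces exactly the $\mathcal T$ sparse families and the two sparse operators $\mathcal A_{b,\mathscr S_t}$, $\mathcal A^*_{b,\mathscr S_t}$ in the statement, now with averages over honest dyadic cubes $Q$.

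For the local estimate the two analytic inputs are (i) the weak-type $(1,1)$ bound for $T$ under $\|\omega\|_{Dini}<\infty$, already recorded in Section~\ref{section 2}, and (ii) the weak-type $(1,1)$ bound, normalised on $Q_0$, for the grand maximal truncated operator
\[
\mathcal M_{T,Q_0}g(x):=\sup_{\substack{Q\in\mathscr D^t,\ Q\ni x\\ Q\subseteq Q_0}}\ \operatorname*{ess\,sup}_{\xi\in Q}\ \bigl|T\bigl(g\,\chi_{B(Q_0)\setminus P(Q)}\bigr)(\xi)\bigr|,
\]
where $P(Q)$ is a fixed bounded number of dyadic generations above $Q$, chosen (in terms of $A_0$ and the dyadic constants $a_1,A_1,\delta$) so that $d(x,X\setminus P(Q))$ is comparable to the radius of $Q$ whenever $x\in Q$. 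Input (ii) follows from (i) together with the Dini regularity of $K$, which upgrades to a H\"ormander-type condition in a single variable exactly as over $\R$, the quasi-metric constant $A_0$ entering only the implied constants. With (i)--(ii) in hand I run the usual recursion on $Q_0$. Set
\[
E:=\bigl\{\mathcal M_{T,Q_0}f>C_1\ave{|f|}_{B(Q_0)}\bigr\}\cup\bigl\{\mathcal M_{T,Q_0}\bigl((b-\ave{b}_{B(Q_0)})f\bigr)>C_2\ave{|b-\ave{b}_{B(Q_0)}||f|}_{B(Q_0)}\bigr\}
\]
together with the level set of $|T((b-\ave{b}_{B(Q_0)})f\chi_{B(Q_0)})|$ above $C_3\ave{|b-\ave{b}_{B(Q_0)}||f|}_{B(Q_0)}$, all intersected with $Q_0$; for $C_1,C_2,C_3$ large enough one gets $\mu(E)\le\tfrac1{2C_{\mu,0}}\mu(Q_0)$ by (i)--(ii), and the maximal dyadic cubes $\{P_j\}$ covering $E$ are pairwise disjoint with $\sum_j\mu(P_j)\le\tfrac12\mu(Q_0)$, giving sparseness upon iteration. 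For $x\in Q_0\setminus\bigcup_jP_j$ the bound is read off from $[b,T](f\chi_{B(Q_0)})=(b-\ave{b}_{B(Q_0)})\,T(f\chi_{B(Q_0)})-T((b-\ave{b}_{B(Q_0)})f\chi_{B(Q_0)})$ and the definition of $E$. For $x\in P_j$ one splits the kernel integration into the part over $P(P_j)$, which reassembles modulo already-controlled errors into $[b,T](f\chi_{B(Q_0)\cap P(P_j)})(x)$ and is handled by recursion, and the part over $B(Q_0)\setminus P(P_j)$, which is dominated by evaluating $T$ at a good point $\xi\in P_j\setminus\bigcup_kP_k$, using kernel regularity, and invoking $\mathcal M_{T,Q_0}f(\xi)\lesssim\ave{|f|}_{B(Q_0)}$ and $\mathcal M_{T,Q_0}((b-\ave{b}_{B(Q_0)})f)(\xi)\lesssim\ave{|b-\ave{b}_{B(Q_0)}||f|}_{B(Q_0)}$, with the factor $|b(x)-\ave{b}_{B(Q_0)}|$ retained where it occurs. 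Summing the resulting geometric series over generations gives the local estimate.

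The main obstacle is ingredient (ii): establishing the weak-type $(1,1)$ bound for $\mathcal M_{T,Q_0}$ and its $b$-twisted counterpart from only the Dini condition is the technical core of \cite{DGKLWY2021}. Running alongside it is the geometric bookkeeping forced by the quasi-metric: the dilation constant $2A_0$ in Definition~\ref{defi1.1} must be tracked through the separation estimate ``$B(Q_0)\setminus P(P_j)$ is far from $x\in P_j$'' and through the replacement of enclosing balls by comparable dyadic cubes of the adjacent systems, and one must check that there are only boundedly many $P_j$ with a given ancestor $P(P_j)$ and that the whole construction runs uniformly in $t=1,\dots,\mathcal T$, so that each output family $\mathscr S_t$ is genuinely $\eta$-sparse for a single $\eta\in(0,1)$. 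Everything else --- the reduction to the local statement and the stopping-time recursion --- is a routine adaptation of the Euclidean argument.
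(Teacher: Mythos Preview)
The paper does not prove this lemma; it is quoted verbatim from \cite{DGKLWY2021} and used as a black box, so there is no in-paper proof to compare against. Your outline is, as you say, the argument of \cite{DGKLWY2021} (itself an adaptation of the Lerner--Ombrosi--Rivera-R\'{\i}os scheme \cite{LOR2017} to the homogeneous-type setting), and the ingredients you list --- the weak-type $(1,1)$ bound for the grand maximal truncation $\mathcal{M}_{T,Q_0}$ under the Dini condition, the stopping-time recursion, and the passage from containing balls $B(Q)$ to genuine dyadic cubes via the finitely many adjacent systems --- are the correct ones. Your identification of the weak-type bound for $\mathcal{M}_{T,Q_0}$ as the technical crux is accurate; the rest is bookkeeping.
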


By Lemma \ref{lemmaA2.1}, we need to show the following two propositions.
\begin{proposition}\label{prop3.2} Suppose $b\in \BMO_\nu^\alpha(X)$.
 Let $1<p \leq q<\infty$, $\frac{\alpha}{\mathbb Q}=\frac{1}{p}-\frac{1}{q}$, $\lambda_1 \in A_{p, p}$ and $\lambda_2 \in A_{q, q}$ and let $\nu=\nu_{p, q}$ be the fractional Bloom weight as in Definition \ref{defi1.3}. Then for the operator $\mathcal{A}_{b,\mathscr{S}}$ as in Lemma \ref{lemmaA2.1}, there exists a positive constant $C$ such that
$$
\left\|\mathcal{A}_{b,\mathscr{S}}|f|\right\|_{L_{\lambda_2}^q(X)} \leq C \|b\|_{\BMO_\nu^\alpha(X)}
\|f\|_{L_{\lambda_1}^p(X)}. $$
\end{proposition}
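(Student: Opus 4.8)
The plan is to prove Proposition~\ref{prop3.2} by weighted duality, to strip the $b$-oscillation from the resulting sparse form by means of Lemma~\ref{lemmaS2}, and thereby reduce the whole estimate to the $L^p_{\lambda_1}(X)\to L^q_{\lambda_2}(X)$ boundedness of the two-weight fractional sparse operator $\mathcal{A}_{\lambda_1,\lambda_2}^{p,q}(\,\cdot\,;\mathscr{S})$, uniformly over sparse families. \emph{Step 1 (duality).} Since $\lambda_2$ and $\lambda_2^{-1}$ are locally integrable (note $\lambda_2^q\in A_q$, hence $\lambda_2\in L^q_{\mathrm{loc}}$), for $h\ge 0$ one has $\|h\|_{L^q_{\lambda_2}(X)}=\sup\{\langle h,g\rangle : g\ge 0,\ \|g\lambda_2^{-1}\|_{L^{q'}(X)}\le 1\}$. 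Applying this to $h=\mathcal{A}_{b,\mathscr{S}}|f|$ and recalling its definition from Lemma~\ref{lemmaA2.1} (and replacing $f$ by $|f|$), it suffices to bound, for $f,g\ge 0$ with $\|f\|_{L^p_{\lambda_1}(X)}\le 1$ and $\|g\lambda_2^{-1}\|_{L^{q'}(X)}\le 1$, the bilinear form
\[
\sum_{Q\in\mathscr{S}}\langle f\rangle_Q\int_Q |b-\langle b\rangle_Q|\,g\,d\mu
\]
by $C\|b\|_{\BMO_\nu^\alpha(X)}$.

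\emph{Step 2 (removing the $b$-oscillation).} The pointwise oscillation $|b(x)-\langle b\rangle_Q|$ occurring under the integral is treated by the usual route of the sparse-domination arguments of \cite{HOS2023,DGKLWY2021,LOR2017}: localize $g$ inside each $Q\in\mathscr{S}$ by a generalized H\"older inequality together with a Calder\'on--Zygmund stopping-time decomposition relative to the dyadic system $\mathscr{D}^t$ containing $Q$, apply the Lebesgue differentiation theorem (valid on $(X,d,\mu)$) on the residual sets, and telescope the terms $|\langle b\rangle_R-\langle b\rangle_Q|$ that are produced. After absorbing everything into a single sparse family $\mathscr{S}'$ (with sparseness constant depending only on $\eta$ and the geometric constants of $X$), the form above is controlled, up to standard manipulations, by a fractional sparse bilinear form whose coefficients carry the factor $\frac{1}{\mu(R)}\int_R|b-\langle b\rangle_R|\,d\mu$. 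By Definition~\ref{def2.2} and Lemma~\ref{lemmaS2},
\[
\frac{1}{\mu(R)}\int_R |b-\langle b\rangle_R|\,d\mu
\le \|b\|_{\BMO_{\lambda_1,\lambda_2}^{p,q}(\mathscr{S}')}\,\frac{\lambda_1^{p}(R)^{1/p}\lambda_2^{-q'}(R)^{1/q'}}{\mu(R)}
\lesssim \|b\|_{\BMO_\nu^\alpha(X)}\,\frac{\lambda_1^{p}(R)^{1/p}\lambda_2^{-q'}(R)^{1/q'}}{\mu(R)},
\]
so that, taking the supremum over $g$, everything reduces to the uniform estimate $\big\|\mathcal{A}_{\lambda_1,\lambda_2}^{p,q}(|f|;\mathscr{S}')\big\|_{L^q_{\lambda_2}(X)}\lesssim\|f\|_{L^p_{\lambda_1}(X)}$.

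\emph{Step 3 (the fractional two-weight sparse bound).} This is the space-of-homogeneous-type, $p\le q$ analogue of the two-weight fractional sparse estimates in \cite{DGKLWY2021} (cf.\ \cite{HOS2023}): dualizing once more, one estimates $\sum_{P\in\mathscr{S}'}\lambda_1^{p}(P)^{1/p}\lambda_2^{-q'}(P)^{1/q'}\langle f\rangle_P\langle g\rangle_P$ by the standard principal-cube (stopping-time) decomposition of the pair $(f,g)$, replacing the averages by the stopped values on each principal cube and then summing by H\"older's inequality. The identity $\alpha/\QQ=1/p-1/q$ together with the $A_s$-membership of $\nu$ recorded in Lemma~\ref{lemmaS2} (equivalently, $\lambda_1\in A_{p,p}$ and $\lambda_2\in A_{q,q}$) is exactly what makes the resulting series converge; since $\mu(X)=\infty$ there is no largest dyadic cube, so the principal cubes are constructed by exhaustion followed by a routine limiting argument.

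\emph{Main obstacle.} The two genuinely substantive points are Steps 2 and 3. In Step 2, the transfer of the Euclidean stopping-time argument to a space of homogeneous type must be carried out with the measure-theoretic care emphasized in Section~\ref{section 2}: the stopping time lives in the dyadic systems $\mathscr{D}^t$, and one must verify that the Lebesgue differentiation theorem and the attendant bookkeeping survive under our regularity hypotheses (for instance when some balls fail to be open). Step 3 is the quantitative heart and is where $p<q$ genuinely differs from the case $p=q$ of \cite{DGKLWY2021}: one cannot collapse to the single Bloom weight $\nu$ at the outset but must keep the factors $\lambda_1^{p}(P)^{1/p}$ and $\lambda_2^{-q'}(P)^{1/q'}$ separate until the concluding H\"older step, where the relation $\alpha/\QQ=1/p-1/q$ and the $A_s$ bound for $\nu$ are combined.
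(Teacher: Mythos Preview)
Your overall strategy---dualize, strip the $b$-oscillation via a sparse expansion combined with Lemma~\ref{lemmaS2}, and then bound the two-weight fractional sparse operator $\mathcal{A}_{\lambda_1,\lambda_2}^{p,q}$---is exactly the paper's. Two streamlinings are worth pointing out. First, your Step~2 re-derives by hand what the paper simply quotes as Lemma~\ref{LemmaA2.2} (from \cite{DGKLWY2021}): the pointwise bound $|b(x)-\langle b\rangle_Q|\lesssim \sum_{P\in\tilde{\mathscr{S}},\,P\subset Q}\Omega(b,P)\chi_P(x)$ on an enlarged sparse family $\tilde{\mathscr{S}}\supset\mathscr{S}$, which plugs directly into Lemma~\ref{lemmaS2}; no Lebesgue differentiation or telescoping needs to be written out. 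Second, and more significantly, your Step~3 is more elaborate than necessary. The paper does not run a principal-cube/stopping argument at all: after dualizing it writes
\[
\sum_{Q\in\tilde{\mathscr{S}}}\lambda_1^p(Q)^{1/p}\langle|f|\rangle_Q\,\lambda_2^{-q'}(Q)^{1/q'}\langle|g|\rangle_Q
\le \Big(\sum_Q\langle|f|\rangle_Q^q\,\lambda_1^p(Q)^{q/p}\Big)^{1/q}\Big(\sum_Q\langle|g|\rangle_Q^{q'}\,\lambda_2^{-q'}(Q)\Big)^{1/q'},
\]
applies $\|\cdot\|_{\ell^q}\le\|\cdot\|_{\ell^p}$ to the first factor (this is precisely where $p\le q$ enters), and closes with the standard weighted Carleson embeddings $\sum_Q\langle|f|\rangle_Q^p\,\lambda_1^p(Q)\lesssim\|f\|_{L^p_{\lambda_1}}^p$ and $\sum_Q\langle|g|\rangle_Q^{q'}\,\lambda_2^{-q'}(Q)\lesssim\|g\|_{L^{q'}_{\lambda_2^{-1}}}^{q'}$, which hold because $\tilde{\mathscr{S}}$ is sparse and $\lambda_1^p\in A_p$, $\lambda_2^{-q'}\in A_{q'}$. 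No exhaustion or limiting is needed, and the $A_s$-membership of $\nu$ is not invoked here---only $\lambda_1\in A_{p,p}$ and $\lambda_2\in A_{q,q}$ enter. Your principal-cube route would also work, but the paper's argument is shorter and gives explicit weight-constant dependence.
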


\begin{proposition}\label{prop3.3} Suppose $b\in \BMO_\nu^\alpha(X)$.
 Let $1<p \leq q<\infty$, $\frac{\alpha}{\mathbb Q}=\frac{1}{p}-\frac{1}{q}$, $\lambda_1 \in A_{p, p}$ and $\lambda_2 \in A_{q, q}$ and let $\nu=\nu_{p, q}$ be the fractional Bloom weight as in Definition \ref{defi1.3}. Then for the operator $\mathcal{A}^*_{b,\mathscr{S}}$
as in Lemma \ref{lemmaA2.1}, there exists a positive constant $C$ such that
$$
\left\|\mathcal{A}^*_{b,\mathscr{S}}|f|\right\|_{ L_{\lambda_2}^q(X)} \leq C \|b\|_{\BMO_\nu^\alpha(X)}
\|f\|_{L_{\lambda_1}^p(X) }.$$
\end{proposition}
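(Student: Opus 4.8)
The plan is to deduce Proposition \ref{prop3.3} from Proposition \ref{prop3.2} by a weighted duality argument, exploiting the fact that $\mathcal{A}^*_{b,\mathscr{S}}$ is the formal adjoint of $\mathcal{A}_{b,\mathscr{S}}$; this way essentially all of the analytic content is already contained in Proposition \ref{prop3.2}. First I would invoke the weighted $L^q$--$L^{q'}$ duality (legitimate since $\mu$ is $\sigma$-finite and $1<q<\infty$): because $\mathcal{A}^*_{b,\mathscr{S}}|f|\ge 0$, it suffices to restrict to non-negative test functions, so
\[
\big\|\mathcal{A}^*_{b,\mathscr{S}}|f|\big\|_{L^q_{\lambda_2}(X)} = \sup_{\substack{0\le g\in L^0(X)\\ \|g\lambda_2^{-1}\|_{L^{q'}(X)}\le 1}} \int_X \mathcal{A}^*_{b,\mathscr{S}}|f|\cdot g\, d\mu .
\]

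Next, expanding the sparse sums and using Tonelli (all integrands are non-negative), for each such $g$ I would record the adjointness identity
\[
\int_X \mathcal{A}^*_{b,\mathscr{S}}|f|\cdot g\,d\mu = \sum_{Q\in\mathscr{S}}\langle|b-\langle b\rangle_Q|\,|f|\rangle_Q\,\langle g\rangle_Q\,\mu(Q) = \int_X |f|\cdot\mathcal{A}_{b,\mathscr{S}}g\,d\mu ,
\]
and then apply H\"older's inequality with the weight $\lambda_1$ inserted:
\[
\int_X \mathcal{A}^*_{b,\mathscr{S}}|f|\cdot g\,d\mu \le \|f\|_{L^p_{\lambda_1}(X)}\,\big\|\mathcal{A}_{b,\mathscr{S}}g\big\|_{L^{p'}_{\lambda_1^{-1}}(X)} .
\]

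The last step is to bound $\|\mathcal{A}_{b,\mathscr{S}}g\|_{L^{p'}_{\lambda_1^{-1}}(X)}$ by Proposition \ref{prop3.2} applied with the \emph{dual data}
\[
(\tilde p,\tilde q,\tilde\lambda_1,\tilde\lambda_2):=(q',\,p',\,\lambda_2^{-1},\,\lambda_1^{-1}).
\]
Here I would check that these data satisfy the hypotheses of Proposition \ref{prop3.2}: $\tilde p\le\tilde q$ since $p\le q$; the standard $A_r$--$A_{r'}$ duality for Muckenhoupt weights (using $pp'=p+p'$ and $qq'=q+q'$) gives $\lambda_2^{-1}\in A_{q',q'}$ and $\lambda_1^{-1}\in A_{p',p'}$; the fractional exponent is unchanged, $\tilde\alpha/\QQ=1/q'-1/p'=1/p-1/q=\alpha/\QQ$; and the Bloom weight is unchanged,
\[
\nu_{\tilde p,\tilde q}=\big(\tilde\lambda_1/\tilde\lambda_2\big)^{\frac{1}{1/\tilde p+1/\tilde q'}}=\big(\lambda_1/\lambda_2\big)^{\frac{1}{1/q'+1/p}}=\nu .
\]
Since $g=|g|\ge 0$, Proposition \ref{prop3.2} then yields $\|\mathcal{A}_{b,\mathscr{S}}g\|_{L^{p'}_{\lambda_1^{-1}}(X)}\lesssim\|b\|_{\BMO^\alpha_\nu(X)}\|g\|_{L^{q'}_{\lambda_2^{-1}}(X)}\le\|b\|_{\BMO^\alpha_\nu(X)}$, and combining the displays and taking the supremum over $g$ finishes the proof.

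With this route there is no serious obstacle: the only genuinely analytic input is Proposition \ref{prop3.2}, the rest is the bookkeeping above together with the routine a priori reduction to finite sparse subfamilies of $\mathscr{S}$ (with constant independent of the subfamily) so that $\|\mathcal{A}^*_{b,\mathscr{S}}|f|\|_{L^q_{\lambda_2}(X)}$ is finite and monotone convergence applies. The one place where a small slip is easy is the verification that $\lambda_2^{-1},\lambda_1^{-1}$ remain in the correct $A_{\cdot,\cdot}$ classes and that $\nu,\alpha$ are invariant under the duality, which is the short computation just indicated. If one instead wanted a direct proof, the hard part would be the cubewise estimate of $\langle|b-\langle b\rangle_Q|\,|f|\rangle_Q$: a generalized H\"older inequality $\langle|b-\langle b\rangle_Q|\,|f|\rangle_Q\lesssim\|b-\langle b\rangle_Q\|_{\exp L,Q}\,\|f\|_{L\log L,Q}$ is not immediately useful, since in the fractional weighted setting $\|b-\langle b\rangle_Q\|_{\exp L,Q}$ is not uniformly controlled by $\|b\|_{\BMO^\alpha_\nu(X)}$, so one would be forced into a stopping-time/principal-cube decomposition reducing matters to the two-weight fractional sparse operator $\mathcal{A}_{\lambda_1,\lambda_2}^{p,q}$ of Definition \ref{defi2.1} and its known $L^p_{\lambda_1}(X)\to L^q_{\lambda_2}(X)$ bound, which is considerably more work.
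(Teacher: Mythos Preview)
Your proposal is correct but takes a genuinely different route from the paper. The paper argues directly: it invokes Lemma \ref{LemmaA2.2} to expand $|b-\langle b\rangle_Q|$ pointwise as a sum of local oscillations over a larger sparse family $\tilde{\mathscr{S}}\supset\mathscr{S}$, then applies Lemma \ref{lemmaS2} to replace each oscillation $\Omega(b,P)$ by $\lambda_1^p(P)^{1/p}\lambda_2^{-q'}(P)^{1/q'}/\mu(P)$, thereby dominating $\mathcal{A}^*_{b,\mathscr{S}}|f|$ by the two-weight fractional sparse operator $\mathcal{A}_{\lambda_1,\lambda_2}^{p,q}(|f|;\tilde{\mathscr{S}})$; the $L^p_{\lambda_1}\to L^q_{\lambda_2}$ bound for the latter is then established by duality, H\"older, and the embedding $\ell^p\hookrightarrow\ell^q$. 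This is exactly the ``stopping-time reducing to $\mathcal{A}_{\lambda_1,\lambda_2}^{p,q}$'' route that you anticipate in your final paragraph. Your approach is shorter and more structural: you exploit that $\mathcal{A}^*_{b,\mathscr{S}}$ is the formal adjoint of $\mathcal{A}_{b,\mathscr{S}}$ together with the self-duality of the Bloom data $(p,q,\lambda_1,\lambda_2)\leftrightarrow(q',p',\lambda_2^{-1},\lambda_1^{-1})$ to reduce everything to Proposition \ref{prop3.2}. The cost is a dependence on Proposition \ref{prop3.2} being proved independently; note that the paper's sentence ``Thus, it is enough to show Proposition \ref{prop3.3}'' indicates that within the paper Proposition \ref{prop3.2} is being deduced from (the argument for) Proposition \ref{prop3.3}, so you should guard against circularity---for instance by observing that the same Lemma \ref{LemmaA2.2}/Lemma \ref{lemmaS2} machinery yields Proposition \ref{prop3.2} directly, without appeal to Proposition \ref{prop3.3}.
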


In order to obtain the above two propositions, we need the following lemma.
\begin{lemma}[\cite{DGKLWY2021}]\label{LemmaA2.2}
Let $0<\eta<1$. Let $\mathscr{D}$ be a dyadic system in $X$ and let $\mathscr{S} \subset \mathscr{D}$ be a $\eta$-sparse family. Assume that $b\in L^1_\mathrm{loc}(X)$. Then there exists a $ \frac{\eta}{2(\eta+1)}$-sparse family $\tilde{\mathscr{S}} \subset \mathscr{D}$ such that $\mathscr{S} \subset \tilde{\mathscr{S}}$ and for every cube $Q \in \tilde{\mathscr{S}}$,
$$
\left|b(x)-\langle b \rangle_Q\right| \leq C \sum_{P \in \tilde{\mathscr{S}}, P \subset Q} \Omega(b, P) \chi_P(x),
$$
for a.e. $x \in Q$, where $$\Omega(b,P):=\frac{1}{\mu(P)}\int_{P}|b(x)-\langle b \rangle_P|d\mu(x).$$
\end{lemma}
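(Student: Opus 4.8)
The plan is to prove this by a Calder\'on--Zygmund stopping time construction inside each cube, in the spirit of Lerner's local mean oscillation estimate: the family $\tilde{\mathscr{S}}$ will be obtained by enlarging $\mathscr{S}$ with all the stopping cubes that the procedure produces.

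Fix first a single cube $Q$ and abbreviate $\Omega_Q:=\Omega(b,Q)$; we may assume $\Omega_Q>0$, else $b$ is a.e.\ constant on $Q$ and there is nothing to prove. Inside $Q$ I run a stopping time relative to the average $\langle b\rangle_Q$: let $\operatorname{ch}(Q)$ be the collection of maximal dyadic cubes $P\subsetneq Q$ with $\frac{1}{\mu(P)}\int_P|b-\langle b\rangle_Q|\,d\mu>2\,\Omega_Q$, repeat the same selection inside each $P\in\operatorname{ch}(Q)$ with $Q$ replaced by $P$, and iterate; write $\mathscr{G}_Q=\{Q\}\cup\operatorname{ch}(Q)\cup\operatorname{ch}(\operatorname{ch}(Q))\cup\cdots$ for the resulting stopping tree. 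Two elementary facts drive everything. First, Chebyshev's inequality and $\int_R|b-\langle b\rangle_R|\,d\mu=\mu(R)\Omega_R$ give $\sum_{P\in\operatorname{ch}(R)}\mu(P)\le\frac{1}{2\Omega_R}\int_R|b-\langle b\rangle_R|\,d\mu=\tfrac12\mu(R)$ for every $R\in\mathscr{G}_Q$; hence $E_R:=R\setminus\bigcup_{P\in\operatorname{ch}(R)}P$ satisfies $\mu(E_R)\ge\tfrac12\mu(R)$, and since every element of $\mathscr{G}_Q$ strictly below $R$ lies inside some cube of $\operatorname{ch}(R)$, the sets $\{E_R\}_{R\in\mathscr{G}_Q}$ are pairwise disjoint, so $\mathscr{G}_Q$ is $\tfrac12$-sparse. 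Second, maximality of $P\in\operatorname{ch}(R)$ forces the dyadic parent $\widehat{P}$ to satisfy $\frac{1}{\mu(\widehat{P})}\int_{\widehat{P}}|b-\langle b\rangle_R|\,d\mu\le 2\Omega_R$ (or $\widehat{P}=R$), and the doubling constant $C_{\mu,0}$ of the dyadic system upgrades this to $\frac{1}{\mu(P)}\int_P|b-\langle b\rangle_R|\,d\mu\le 2C_{\mu,0}\Omega_R$, so in particular $|\langle b\rangle_P-\langle b\rangle_R|\le 2C_{\mu,0}\Omega_R$.

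The pointwise estimate follows by telescoping. Since $\mu(P)\le\tfrac12\mu(R)$ whenever $P\in\operatorname{ch}(R)$, for a.e.\ $x\in Q$ the chain $Q=P_0\supsetneq P_1\supsetneq\cdots\supsetneq P_m$ of cubes of $\mathscr{G}_Q$ containing $x$ is finite and $x$ lies in no cube of $\operatorname{ch}(P_m)$; for a.e.\ such $x$ every dyadic cube between $P_m$ and $x$ fails the stopping criterion, so dyadic Lebesgue differentiation (legitimate because dyadic cubes are Borel and $b\in L^1_{\mathrm{loc}}(X)$) gives $|b(x)-\langle b\rangle_{P_m}|\le 2\Omega_{P_m}$. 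Combining this with the jump bound $|\langle b\rangle_{P_j}-\langle b\rangle_{P_{j-1}}|\le 2C_{\mu,0}\Omega_{P_{j-1}}$ and telescoping,
\[
|b(x)-\langle b\rangle_Q|\le 2\Omega_{P_m}+\sum_{j=1}^{m}2C_{\mu,0}\Omega_{P_{j-1}}\le 2C_{\mu,0}\sum_{j=0}^{m}\Omega(b,P_j)=2C_{\mu,0}\sum_{P\in\mathscr{G}_Q}\Omega(b,P)\chi_P(x),
\]
which is the asserted bound, the sum running over $P\in\mathscr{G}_Q$, a subcollection of $\{P\in\tilde{\mathscr{S}}:P\subseteq Q\}$.

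It remains to assemble a single family $\tilde{\mathscr{S}}\supset\mathscr{S}$ for which the conclusion holds at \emph{every} $Q\in\tilde{\mathscr{S}}$. I would enumerate $\mathscr{S}$ from coarser to finer cubes and, when processing $Q\in\mathscr{S}$, run the stopping time inside $Q$ exactly as above but also declaring a cube terminal the moment it already belongs to the portion of $\tilde{\mathscr{S}}$ built so far; all newly created stopping cubes, together with $Q$ itself, are added to $\tilde{\mathscr{S}}$. Because the stopping criterion for the children of a cube $R$ refers only to $R$ and its subcubes, the subtree of $\mathscr{G}_{Q}$ rooted at any selected cube is itself a valid stopping tree lying in $\tilde{\mathscr{S}}$, so the pointwise bound just proved holds verbatim at every $Q\in\tilde{\mathscr{S}}$. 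The sparseness is the only delicate point: one must verify that the sets $E_R$ attached to the stopping trees, taken together with the sets $E_Q$ witnessing the $\eta$-sparseness of $\mathscr{S}$, still have finite overlap, and carrying out this overlap accounting --- using that terminating at previously chosen cubes prevents the trees from re-entering regions already accounted for --- is precisely what yields the exponent $\tfrac{\eta}{2(\eta+1)}$. I expect this overlap bookkeeping, i.e.\ reconciling the $\tfrac12$-sparseness of the individual stopping trees with the $\eta$-sparseness of $\mathscr{S}$ without disturbing the pointwise estimate, to be the main obstacle; the stopping-time estimate and the telescoping are routine once the threshold $2\Omega_Q$, and its doubled companion $2C_{\mu,0}\Omega_Q$ at the parent cube, are correctly calibrated.
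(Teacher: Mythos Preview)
The paper does not prove this lemma: it is quoted verbatim from \cite{DGKLWY2021} (which in turn adapts the Euclidean argument of Lerner--Ombrosi--Rivera-R\'{\i}os \cite{LOR2017}) and used as a black box. So there is no ``paper's own proof'' to compare against; your task reduces to whether the sketch you wrote is a correct proof of the cited result.

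Your stopping-time construction inside a single cube $Q$ and the telescoping estimate are exactly the standard argument, and both steps are correct as written. The genuine gap is the one you yourself flag: the assembly of the global family $\tilde{\mathscr{S}}$ and the verification of its $\frac{\eta}{2(\eta+1)}$-sparseness are not carried out, only described in outline. In the standard proof (see \cite[Lemma~5.1]{LOR2017} and its adaptation in \cite{DGKLWY2021}) one does \emph{not} use your device of ``terminating at previously chosen cubes''. Instead one simply sets $\tilde{\mathscr{S}}=\bigcup_{Q\in\mathscr{S}}\mathscr{G}_Q$ and appeals to the equivalence between $\eta$-sparseness and the $\Lambda$-Carleson packing condition ($\Lambda=1/\eta$): since $\mathscr{S}$ is $\frac{1}{\eta}$-Carleson and each $\mathscr{G}_Q$ is $2$-Carleson inside $Q$, the union $\tilde{\mathscr{S}}$ is $\frac{2}{\eta}\cdot\eta+2=2+\frac{2}{\eta}\cdot\eta$---more precisely, a short computation gives the Carleson constant $\frac{2(\eta+1)}{\eta}$, which translates back to $\frac{\eta}{2(\eta+1)}$-sparseness. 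This Carleson-packing route sidesteps the overlap bookkeeping you anticipate and also makes it automatic that the pointwise estimate holds at every $Q\in\tilde{\mathscr{S}}$, because every such $Q$ is the root of some stopping tree $\mathscr{G}_Q\subset\tilde{\mathscr{S}}$. Your termination variant could be made to work, but it is more delicate and you have not completed it; if you want a clean proof, switch to the Carleson-condition argument.
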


  By Lemma \ref{LemmaA2.2} and Lemma \ref{lemmaS2}, we have
$$
\begin{aligned}
\left\langle\left|b-\langle b\rangle_Q\right||f|\right\rangle_Q & \lesssim\Bigg\langle\sum_{\substack{P \in \tilde{\mathscr{S}} \\
P \subset Q}}\left\langle\left|b-\langle b\rangle_P\right|\right\rangle_P|f| \chi_P\Bigg\rangle_Q=\Bigg\langle\sum_{\substack{P \in \tilde{\mathscr{S}} \\
P \subset Q}}\left\langle\left|b-\langle b\rangle_P\right|\right\rangle_P\langle|f|\rangle_P \chi_P\Bigg\rangle_Q \\
& \lesssim C_{[\lambda_1]_{A_{p, p}},[\lambda_2]_{A_{q, q}}}\|b\|_{\BMO_\nu^\alpha}\Bigg\langle\sum_{\substack{P \in \tilde{\mathscr{S}} \\
P \subset Q}} \frac{\lambda_1^p(P)^{\frac{1}{p}} \lambda_2^{-q^{\prime}}(P)^{\frac{1}{q^{\prime}}}}{\mu(P)}\langle|f|\rangle_P \chi_P\Bigg\rangle_Q
\end{aligned}.
$$
Thus, it is enough to show Proposition \ref{prop3.3}. By duality and H\"{o}lder's inequality, we have
$$
\begin{aligned}
\left\|\mathcal{A}_{\lambda_1, \lambda_2}^{p, q}(f ; \tilde{\mathscr{S}})\right\|_{L_{\lambda_2}^q(X)}&=\sup_{g:\|g\|_{L^{q^{\prime}}_{{\lambda_2}^{-1}}(X)}=1}\left|\left\langle\mathcal{A}_{\lambda_1, \lambda_2}^{p, q}(f ; \tilde{\mathscr{S}}), g\right\rangle\right|\\ & \leq \sup_{g:\|g\|_{L^{q^{\prime}}_{\lambda_2^{-1}}(X)}=1}\sum_{Q \in \tilde{\mathscr{S}}} {\lambda_1}^p(Q)^{\frac{1}{p}}\langle|f|\rangle_Q \lambda_2^{-q^{\prime}}(Q)^{\frac{1}{q^{\prime}}}\langle|g|\rangle_Q \\
& \leq\sup_{g:\|g\|_{L^{q^{\prime}}_{\lambda_2^{-1}}(X)}=1}\left(\sum_{Q \in \tilde{\mathscr{S}}}\langle|f|\rangle_Q^q {\lambda_1}^p(Q)^{\frac{q}{p}}\right)^{\frac{1}{q}}\left(\sum_{Q \in \tilde{\mathscr{S}}}\langle|g|\rangle_Q^{q^{\prime}} \lambda_2^{-q^{\prime}}(Q)\right)^{\frac{1}{q^{\prime}}} \\
& \leq\sup_{g:\|g\|_{L^{q^{\prime}}_{\lambda_2^{-1}}(X)}=1}\left(\sum_{Q \in \tilde{\mathscr{S}}}\langle|f|\rangle_Q^p {\lambda_1}^p(Q)\right)^{\frac{1}{p}}\left(\sum_{Q \in \tilde{\mathscr{S}}}\langle|g|\rangle_Q^{q^{\prime}} \lambda_2^{-q^{\prime}}(Q)\right)^{\frac{1}{q^{\prime}}(X)} \\
& \lesssim\sup_{g:\|g\|_{L^{q^{\prime}}_{\lambda_2^{-1}}(X)}=1}[\lambda_1]_{A_{p, p}}^{p^{\prime}}\left[\lambda_2^{-1}\right]_{A_{q^{\prime}, q^{\prime}}}^q\|f\|_{L_{\lambda_1}^p(X)}\|g\|_{L_{\lambda_2^{-1}}^{q^{\prime}}(X)}=[\lambda_1]_{A_{p, p}}^{p^{\prime}}[\lambda_2]_{A_{q, q}}^q\|f\|_{L_{\lambda_1}^p(X)},
\end{aligned}
$$
where we use  $\|\cdot\|_{\ell^q} \leq\|\cdot\|_{\ell^p}$ (by $p \leq q$ ).

Hence, by the above inequality, we have
\begin{align*}
    \left\|\mathcal{A}_{b, \mathscr{S}}^*f\right\|_{L_{\lambda_2}^q(X)} &\lesssim C_{[\lambda_1]_{A_{p, p}},[\lambda_2]_{A_{q, q}}}\|b\|_{\BMO_\nu^\alpha(X)}\left\|\mathcal{A}_{\lambda_1, \lambda_2}^{p, q}(f ; \tilde{\mathscr{S}})\right\|_{L_{\lambda_2}^q(X)}\\&\lesssim C_{[\lambda_1]_{A_{p, p}},[\lambda_2]_{A_{q, q}}}\|b\|_{\BMO_\nu^\alpha(X)}\|f\|_{L_{\lambda_1}^p(X) }.
\end{align*}
The proof of Theorem \ref{thm1} is complete.

\section{Proof of Theorem \ref{thm2}}\label{section 4}

\subsection{A proof using the median method}

If $b$ is a real-valued measurable function, we can use the approach and method in \cite{DGKLWY2021} (``median method''). We recall the following kernel condition from \cite{DGKLWY2021}.

\begin{defi}\label{defi4,1}
    There exist positive constants $3\le A_1\le A_2$ such that for any ball $B:=B(x_0,r)\subset X$, there exists a ball $\widetilde{B}:=B(y_0,r)$ such that $A_1r\le d(x_0,y_0)\le A_2r$, and for all $(x,y)\in B\times \widetilde{B}$, $K(x,y)$ does not change sign and 
    \begin{align}\label{eq-4.2}
        |K(x,y)|\gtrsim \frac{1}{\mu(B)}.
    \end{align}
\end{defi}

\begin{lemma}[\cite{DGKLWY2021}]
    Let $T$ be a Calder\'{o}n-Zygmund operator with kernel $K$ as Definition \ref{defi1.1}, and satisfy the ``non-degenerate" condition \eqref{eq-1.1}. Then T satisfies \eqref{eq-4.2}.
\end{lemma}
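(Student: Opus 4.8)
The plan is to feed the non-degenerate condition \eqref{eq-1.1} into the kernel regularity of Definition \ref{defi1.1}. I would pick a point $y_0$ at which $|K(x_0,y_0)|$ is large, at a scale $\rho r$ that is much bigger than $r$, and then show that on the whole product $B\times\widetilde B$ of $r$-balls the kernel stays within half of $K(x_0,y_0)$ -- which is more than enough to get a fixed sign and the lower bound \eqref{eq-4.2}. The observation that makes this work is that both the lower bound $|K(x_0,y_0)|\ge \big(c_0\mu(B(x_0,\rho r))\big)^{-1}$ coming from \eqref{eq-1.1} and the oscillation bound coming from the smoothness of $K$ will carry the same factor $\mu(B(x_0,\rho r))^{-1}$; this factor cancels, so the only quantity I need to make small is $\omega(\mathrm{const}/\rho)$, and this costs nothing once $\rho$ is a single large constant depending on the data.

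Concretely: fix $B=B(x_0,r)$ and a large constant $\rho\ge 3$ to be pinned down at the end. Apply \eqref{eq-1.1} at $x_0$ with radius $\rho r$ to obtain $y_0$ with $\rho r\le d(x_0,y_0)<\bar C\rho r$ and $|K(x_0,y_0)|\ge \big(c_0\mu(B(x_0,\rho r))\big)^{-1}$, and set $\widetilde B:=B(y_0,r)$; then the separation requirement $A_1 r\le d(x_0,y_0)\le A_2 r$ of Definition \ref{defi4,1} holds with $A_1:=\rho$ and $A_2:=\bar C\rho$, which depend only on the data. For $(x,y)\in B\times\widetilde B$ I would interpolate through $K(x,y_0)$. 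A routine use of the quasi-triangle inequality shows that, once $\rho$ exceeds a purely geometric threshold, one has $d(x_0,x)<r<(2A_0)^{-1}d(x_0,y_0)$ and $d(y_0,y)<r<(2A_0)^{-1}d(y_0,x)$, the ratios $d(x_0,x)/d(x_0,y_0)$ and $d(y_0,y)/d(y_0,x)$ are $\lesssim 1/\rho$, and $V(x_0,y_0)$, $V(y_0,x)$ and $\mu(B(x_0,\rho r))$ are mutually comparable with constants depending only on $A_0$, $\bar C$ and $\mathbb Q$. Applying the smoothness estimate of Definition \ref{defi1.1} once in each variable, and using that $\omega$ is increasing, then gives
\[
|K(x,y)-K(x_0,y_0)|\le\frac{C_{\mathrm{geo}}\,\omega(C_{\mathrm{geo}}/\rho)}{\mu(B(x_0,\rho r))},
\]
with $C_{\mathrm{geo}}$ depending only on $A_0$, $\bar C$ and $\mathbb Q$.

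Since $\omega$ is continuous with $\omega(0)=0$, I can now fix $\rho$ large enough that, besides the finitely many geometric thresholds above, $C_{\mathrm{geo}}\,\omega(C_{\mathrm{geo}}/\rho)\le (2c_0)^{-1}$. Then for every $(x,y)\in B\times\widetilde B$ we get $|K(x,y)-K(x_0,y_0)|\le \tfrac12\big(c_0\mu(B(x_0,\rho r))\big)^{-1}\le\tfrac12|K(x_0,y_0)|$, so $K(x,y)$ lies in the ball of radius $\tfrac12|K(x_0,y_0)|$ around $K(x_0,y_0)$; in particular $K(x,y)\neq 0$, and when $K$ is real-valued $K(x,y)$ has the same sign as $K(x_0,y_0)$, so $K$ does not change sign on $B\times\widetilde B$. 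Moreover
\[
|K(x,y)|\ge\tfrac12|K(x_0,y_0)|\ge\frac{1}{2c_0\,\mu(B(x_0,\rho r))}\gtrsim\frac{1}{\rho^{\mathbb Q}\,\mu(B)}\gtrsim\frac{1}{\mu(B)}
\]
by the doubling property and the fact that $\rho$ is now a fixed constant, which is \eqref{eq-4.2}. This establishes Definition \ref{defi4,1}, with all constants depending only on $A_0$, $c_0$, $\bar C$, the doubling order and $\omega$.

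The one place to be careful is the order of operations: the kernel smoothness must be invoked \emph{before} comparing $\mu(B(x_0,\rho r))$ with $\mu(B)$, so that the large factor $\mu(B(x_0,\rho r))^{-1}$ cancels against the lower bound $|K(x_0,y_0)|\ge \big(c_0\mu(B(x_0,\rho r))\big)^{-1}$. Estimating the oscillation directly by $\mu(B)^{-1}$ would cost a factor $\rho^{\mathbb Q}$ and would force $\rho^{\mathbb Q}\omega(\mathrm{const}/\rho)$ to be small, which fails for general moduli (e.g.\ H\"older $\omega$). Apart from this bookkeeping, the argument uses only the continuity of $\omega$ at the origin -- the Dini condition plays no role in this lemma -- together with the doubling inequalities needed to make the smoothness estimates applicable and to compare the various ball measures, all of which are routine.
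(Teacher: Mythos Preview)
Your argument is correct and is essentially the standard one. The paper does not supply its own proof of this lemma---it is quoted from \cite{DGKLWY2021}---so there is nothing to compare against directly. That said, the paper does carry out precisely the same computation in the proof of Proposition~\ref{prop:propertiesofkernelsxdmu} (for the variant non-degeneracy condition \eqref{eq-1.1opp}): one picks the non-degenerate point at a dilated scale, splits $K(x,y)-K(x_0,y_0)$ through an intermediate value, uses the kernel smoothness together with the comparability of $V(x_0,y_0)$, $V(y_0,x)$ and $\mu(B(x_0,\rho r))$, and then sends the scale parameter to infinity so that $\omega(\mathrm{const}/\rho)$ absorbs into $\tfrac12|K(x_0,y_0)|$. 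Your bookkeeping remark---that one must cancel the factor $\mu(B(x_0,\rho r))^{-1}$ \emph{before} comparing with $\mu(B)^{-1}$---is exactly the point, and your observation that only continuity of $\omega$ at $0$ (not the Dini condition) is used here is accurate.
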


\begin{defi}
    By a median value of a real-valued measurable function $f$ over $B$, we mean a possibly non-unique, real number $\alpha_{B}(f)$ such that 
    \begin{align*}
        \mu({x\in B:f(x)>\alpha_{B}(f)})\leq \frac{1}{2}\mu(B)
    \end{align*}and 
    \begin{align*}
          \mu({x\in B:f(x)<\alpha_{B}(f)})\leq \frac{1}{2}\mu(B).
    \end{align*}
\end{defi}

\begin{lemma}[\cite{DGKLWY2021}] Let $b$ be a real-valued measurable function. For any ball $B$, let $\widetilde{B}$ be as in Definition \ref{defi4,1}. Then there exist measurable sets $E_1,E_2\subset B$
and $F_1,F_2\subset \widetilde{B}$, such that
\begin{enumerate}
    \item $B=E_1\cup E_2, \widetilde{B}=F_1 \cup F_2 $ and $\mu(F_i) \geq \frac{1}{2}\mu( \widetilde{B}),i=1,2;$
    \item $b(x)-b(y)$ does not change sign for all $(x,y)$ in $E_i \times F_i,i=1,2;$
    \item $|b(x)-\alpha_{\widetilde{B}}(b)|\leq |b(x)-b(y)|$ for all $(x,y)$ in $E_i \times F_i ,i=1,2.$
\end{enumerate}

\end{lemma}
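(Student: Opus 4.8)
The statement to prove is the median-decomposition lemma: for a real-valued measurable $b$ and any ball $B$ with associated $\widetilde{B}$, there exist $E_1,E_2\subset B$ and $F_1,F_2\subset\widetilde B$ with those three properties. Let me sketch how I'd prove it.

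The plan: fix a median $\alpha = \alpha_{\widetilde B}(b)$. Split $B$ according to whether $b(x) \geq \alpha$ or $b(x) < \alpha$ — call these $E_1$ and $E_2$. Split $\widetilde B$ according to whether $b(y) \leq \alpha$ or $b(y) > \alpha$ — but wait, I need $\mu(F_i) \geq \frac12\mu(\widetilde B)$ for *both* $i$. The median property gives $\mu(\{y \in \widetilde B : b(y) > \alpha\}) \leq \frac12\mu(\widetilde B)$, so its complement $\{y \in \widetilde B : b(y) \leq \alpha\}$ has measure $\geq \frac12 \mu(\widetilde B)$. Similarly $\{y : b(y) < \alpha\}$ has measure $\leq \frac12$, so $\{y : b(y) \geq \alpha\}$ has measure $\geq \frac12$. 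So I should set $F_1 = \{y \in \widetilde B : b(y) \leq \alpha\}$ and $F_2 = \{y \in \widetilde B : b(y) \geq \alpha\}$; these overlap on $\{b = \alpha\}$ but that's fine, the lemma only asks $\widetilde B = F_1 \cup F_2$. Then on $E_1 \times F_1$: $b(x) \geq \alpha \geq b(y)$ so $b(x) - b(y) \geq 0$; on $E_2 \times F_2$: $b(x) < \alpha \leq b(y)$ so $b(x)-b(y) < 0$. Good, (2) holds. For (3): on $E_1\times F_1$, $|b(x) - \alpha| = b(x)-\alpha \leq b(x) - b(y) = |b(x)-b(y)|$ since $b(y) \leq \alpha$; on $E_2 \times F_2$, $|b(x)-\alpha| = \alpha - b(x) \leq b(y) - b(x) = |b(x)-b(y)|$ since $b(y) \geq \alpha$. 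Done.

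Let me double-check the key ordering subtlety: I pair $E_1$ (where $b \geq \alpha$) with $F_1$ (where $b \leq \alpha$), i.e., the "large on $B$, small on $\widetilde B$" pairing, and $E_2$ (where $b < \alpha$) with $F_2$ (where $b \geq \alpha$), i.e., "small on $B$, large on $\widetilde B$". That's what makes the sign of $b(x)-b(y)$ constant and makes $|b(x)-\alpha|$ controlled. The measurability of all sets is immediate since $b$ is measurable. There is essentially no obstacle here — it's a clean argument; I just need to be careful with which median-value inequality controls which $F_i$, and to note $E_1, E_2$ partition $B$ (disjointly) while $F_1, F_2$ merely cover $\widetilde B$.

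=== PROOF PROPOSAL ===

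\medskip

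\noindent\textbf{Proof plan.} Fix a median value $\alpha:=\alpha_{\widetilde B}(b)$ of $b$ over $\widetilde B$, which exists by the definition of a median value. First I would define the subsets of $B$ by
\[
E_1:=\{x\in B: b(x)\ge \alpha\},\qquad E_2:=\{x\in B: b(x)<\alpha\},
\]
so that $E_1,E_2$ are disjoint, measurable (since $b$ is measurable), and $B=E_1\cup E_2$. Next I would define the subsets of $\widetilde B$ by
\[
F_1:=\{y\in\widetilde B: b(y)\le \alpha\},\qquad F_2:=\{y\in\widetilde B: b(y)\ge \alpha\},
\]
which are measurable and satisfy $\widetilde B=F_1\cup F_2$ (they overlap only on $\{b=\alpha\}$, which the statement permits). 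The measure bound $\mu(F_i)\ge\frac12\mu(\widetilde B)$ is where the median property is used: from $\mu(\{y\in\widetilde B: b(y)>\alpha\})\le\frac12\mu(\widetilde B)$ one gets $\mu(F_1)\ge\frac12\mu(\widetilde B)$ by passing to the complement in $\widetilde B$, and from $\mu(\{y\in\widetilde B: b(y)<\alpha\})\le\frac12\mu(\widetilde B)$ one gets $\mu(F_2)\ge\frac12\mu(\widetilde B)$ likewise. This establishes (1).

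\medskip

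\noindent For (2), observe that the pairing is chosen so that $B$-values and $\widetilde B$-values sit on opposite sides of $\alpha$: on $E_1\times F_1$ one has $b(x)\ge\alpha\ge b(y)$, hence $b(x)-b(y)\ge 0$ throughout; on $E_2\times F_2$ one has $b(x)<\alpha\le b(y)$, hence $b(x)-b(y)<0$ throughout. Thus $b(x)-b(y)$ does not change sign on either $E_i\times F_i$. For (3), on $E_1\times F_1$ we have $|b(x)-\alpha|=b(x)-\alpha\le b(x)-b(y)=|b(x)-b(y)|$ because $b(y)\le\alpha$; on $E_2\times F_2$ we have $|b(x)-\alpha|=\alpha-b(x)\le b(y)-b(x)=|b(x)-b(y)|$ because $b(y)\ge\alpha$. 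This gives (3) and completes the proof.

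\medskip

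\noindent\textbf{Main point of care.} There is no serious obstacle; the only thing to get right is the \emph{cross-pairing}: $E_1$ (where $b$ is large on $B$) must be matched with $F_1$ (where $b$ is small on $\widetilde B$), and $E_2$ with $F_2$, so that simultaneously (i) $b(x)-b(y)$ has a fixed sign and (ii) $|b(x)-\alpha_{\widetilde B}(b)|$ is dominated by $|b(x)-b(y)|$. One must also note that the two median inequalities are used on the two different sets $F_1$ and $F_2$ (each via its complement in $\widetilde B$), which is what forces $F_1$ and $F_2$ to be defined with non-strict inequalities and hence to overlap on $\{b=\alpha\}$ — harmless since only $\widetilde B=F_1\cup F_2$ is required, not disjointness.
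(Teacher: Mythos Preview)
Your proof is correct and is exactly the standard median-decomposition construction. The paper does not prove this lemma (it is cited from \cite{DGKLWY2021}); immediately after the statement it writes down the sets it will use, and in fact the paper's displayed definitions pair $E_i$ and $F_i$ with the \emph{same} inequality direction (both $\le\alpha$ for $i=1$, both $\ge\alpha$ for $i=2$), which is evidently a typo since that same-side pairing fails conditions (2) and (3) --- your cross-pairing ($b\ge\alpha$ on $E_1$ matched with $b\le\alpha$ on $F_1$, and vice versa) is the correct construction.
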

We now return to the proof of Theorem \ref{thm2}. We set 
$$F_1:=\{y\in\widetilde{B}:b(y)\leq \alpha_{\widetilde{B}}(b) \}\quad\text{and} \quad F_2:=\{y\in\widetilde{B}:b(y)\geq \alpha_{\widetilde{B}}(b) \}$$
$$E_1:=\{y\in B:b(y)\leq \alpha_{\widetilde{B}}(b) \}\quad\text{and} \quad E_2:=\{y\in B:b(y)\geq \alpha_{\widetilde{B}}(b) \}$$
Therefore, we have
\begin{align*}
    \Omega(b,B)&=\frac{1}{\mu(B)}\int_{B}|b(x)-\langle b \rangle_{B}|d\mu(x)\\
    &\lesssim \frac{1}{\mu(B)}\sum_{i=1}^2\int_{B}|b(x)-\alpha_{\widetilde{B}} |d\mu(x)\\
    &\lesssim \frac{1}{\mu(B)}\sum_{i=1}^2\int_{E_i}\int_{F_i}\frac{|b(x)-\alpha_{\widetilde{B}} |}{\mu(B)}d\mu(y)d\mu(x)\\
    &\lesssim \frac{1}{\mu(B)}\sum_{i=1}^2\int_{E_i}\int_{F_i}|b(x)-b(y)||K(x,y)|d\mu(y)d\mu(x)\\
     &\lesssim \frac{1}{\mu(B)}\sum_{i=1}^2\int_{B}|[b,T]f_{i}(x)|d\mu(x)\\
\end{align*}
Using H\"older inequality and $[b,T]$ is a bounded operator from $L_{\lambda_1}^p(X)$ to $L_{\lambda_2}^q(X)$, we have
\begin{align*}
\frac{1}{\mu(B)}\sum_{i=1}^2\int_{B}[b,T]f_{i}(x)d\mu(x)
&=\frac{1}{\mu(B)}\sum_{i=1}^2\int_{B}|[b,T]f_{i}(x)|\lambda_2(x)\lambda^{-1}_2(x)d\mu(x)
\\&\lesssim \frac{1}{\mu(B)}\sum_{i=1}^2\bigg[\int_{B}|[b,T]f_{i}(x)|^{q}\lambda^{q}_2(x)d\mu(x)\bigg]^{\frac{1}{q}}\bigg[\int_{B}\lambda^{-q'}_2(x)d\mu(x)\bigg]^{\frac{1}{q'}}\\
&\lesssim \|[b, T]\|_{L_{\lambda_{1}}^p \rightarrow L_{\lambda_{2}}^q}\left(\lambda_{1}^p(\widetilde{B})\right)^{\frac{1}{p}}\left(\lambda_{2}^{-q^{\prime}}(B)\right)^{\frac{1}{q^{\prime}}} .
\end{align*}
By the definition of $A_{p, p}$ weights, and the doubling property, we have
$$
\lambda_{1}^p(\widetilde{B}) \lesssim[\lambda_{1}]_{A_{p, p}} \lambda_{1}^p(B).
$$
Thus, if $b$ is a real-valued measurable function. The proof of Theorem \ref{thm2} is complete.

\subsection{A proof using approximate weak factorisation}

The method of approximate weak factorisation, which was originally introduced in the Euclidean space by Hyt\"{o}nen (\cite{Hyt2021}), readily allows complex-valued multiplier functions $b$ of the commutator $[b,T]$, and not only real-valued $b$ like the median method allows. We present a proof based on the approximate weak factorisation.

To improve comparability of the present argument to the argument of \cite{Hyt2021}, we work in this subsection with the following non-degeneracy condition of the kernel $K$, which is an opposite version of \eqref{eq-1.1} in a sense.

\begin{defi}[Another non-degeneracy condition]
Suppose $K$ is an $\omega$-Calder\'{o}n-Zygmund kernel. There are positive constants $c_0$ and $\bar{C}$ such that for every $y \in X$ and $r>0$, there exists $x \in B(y, \bar{C} r) \backslash B(y, r)$ satisfying
\begin{align}\label{eq-1.1opp}
|K(x, y)| \geq \frac{1}{c_0 \mu(B(y, r))}.
\end{align}
\end{defi}
At the time of writing this paper, there already existed two forms of the non-degeneracy condition in the literature: one where the existential quantifier acts on $x$ and one where it acts on $y$. The former is how non-degeneracy was originally formulated in the Euclidean setting \cite{Hyt2021}; the formulation in spaces of homogeneous type is in \eqref{eq-1.1opp}. The latter is what we call non-degeneracy in this paper \eqref{eq-1.1}; this condition has been used at least in \cite{DGKLWY2021, LL2022}.

As the following deduction will show in the end, the two non-degeneracy conditions lead to two different consequences of the approximate weak factorisation that are in a sense dual versions of each other. This is why the two non-degeneracy conditions are interchangeable in this context, even if they were not equivalent.

We will work with assumptions on the singular integral operator $T$ that are weaker than those formulated in Theorem \ref{thm2}. The aim is to prove the following Proposition \ref{prop:oikarisawfformulationforcalderonzygmundxdmu} so that it can be then used to transform information on the commutator (boundedness) to information on the oscillation of $b$. For any unclear notation, see Section \ref{section 2} above and Definition \ref{defi:notnecessarilyboundedsio} below.

\begin{proposition}[Bounding oscillations with \eqref{eq-1.1opp}]\label{prop:oikarisawfformulationforcalderonzygmundxdmu}
Suppose $K$ is an $\omega$-Calder\'{o}n-Zygmund kernel in $X$ that satisfies \eqref{eq-1.1opp} and suppose $T\in \mathrm{SIO}(K,L^1_{\rm bs}(X))$, $b\in L^1_{\mathrm{loc}}(X)$ and $c\geq 1$. Let $B$ be a ball in $X$ with radius $r$.

Then there exists a ball $\wt{B}$ that has the same radius $r$ as $B$, lies at distance $d(B,\wt{B})\approx r$ from $B$ and the following holds: for any $E\subset B$ and $\wt{E}\subset \wt{B}$ such that $\mu(B)\leq c\mu(E)$ and $\mu(\wt{B})\leq c\mu(\wt{E})$, we have 
\begin{equation}
\int_{E}|b-\ave{b}_E| d \mu \lesssim \big|\int_{\wt{E}}g_{\wt{E}}[b,T]h_E d \mu \big|+\big|\int_{\wt{E}}h_{\wt{E}}[b,T]g_E d \mu \big|,
\end{equation}
where the auxiliary functions $g_E,h_E\in L^\infty(E)$ and $g_{\wt{E}},h_{\wt{E}}\in L^\infty(\wt{E})$ satisfy 
\[
g_E=\chi_E, \quad g_{\wt{E}}=\chi_{\wt{E}}, \quad \|h_E\|_{L^\infty(X)}\lesssim 1, \quad \|h_{\wt{E}}\|_{L^\infty(X)}\lesssim 1.
\]
The implied constants depend at most on the kernel parameters $c_0$, $\bar{C}$, $c_K$, the parameters $C_\mu$ and $A_0$ of the space $X$ and on $c$.
\end{proposition}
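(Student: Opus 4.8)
The plan is to follow the structure of Hytönen's approximate weak factorisation argument from \cite{Hyt2021}, adapting each step to the quasi-metric and doubling-measure setting of a space of homogeneous type. First I would use the non-degeneracy condition \eqref{eq-1.1opp} together with the kernel size bound \eqref{eq:kernelsizeassumption} to produce, for the given ball $B=B(x_0,r)$, a ``companion'' point $y_1$ and then a companion ball $\wt{B}=B(y_1,r)$ satisfying $d(B,\wt{B})\approx r$ and a two-sided estimate $|K(x,y)|\approx \mu(B)^{-1}$ for $(x,y)\in B\times\wt{B}$: the lower bound comes from \eqref{eq-1.1opp} applied at scale $\approx r$, and the upper bound from \eqref{eq:kernelsizeassumption} after noting that $V(x,y)\approx \mu(B)$ for such pairs (this uses the triangle inequality with constant $A_0$, the doubling property, and Lemma \ref{lemma:posdistanceimpliesseparation} to ensure the kernel is actually evaluated off the diagonal). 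Here the quasi-triangle inequality forces a slightly careful bookkeeping of the constants $\bar C$, $A_0$, $C_\mu$ — choosing the radius at which to invoke \eqref{eq-1.1opp} comparable to $r$ but large enough that the resulting $y_1$ is genuinely separated from $B$.

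Next I would run the approximate weak factorisation identity. The starting point is the elementary pointwise identity, valid for $x\in E$ and a test function $h_{\wt E}$ supported on $\wt E$,
\[
b(x)-\ave{b}_{\wt E}^{K}\;=\;\frac{1}{\langle K(x,\cdot),h_{\wt E}\rangle}\Big([b,T]h_{\wt E}(x)\;+\;\big(b(x)\,T h_{\wt E}(x)-T(b\,h_{\wt E})(x)\big)\text{-type correction}\Big),
\]
which is the homogeneous-space analogue of the algebra used in \cite[Section 3]{Hyt2021}: one writes $T(h_{\wt E})(x)=\int_{\wt E}K(x,y)h_{\wt E}(y)\,d\mu(y)$, exploits that $K$ does not vanish and has roughly constant size on $E\times\wt E$, and chooses $h_{\wt E}$ so that $\int_{\wt E}K(x,y)h_{\wt E}(y)\,d\mu(y)$ is bounded below uniformly in $x\in E$ while $\|h_{\wt E}\|_{L^\infty}\lesssim 1$ (concretely $h_{\wt E}=\overline{\mathrm{sgn}}$ of a suitable reference value of $K$, times $\chi_{\wt E}$, after possibly splitting $\wt E$ into finitely many pieces on which a phase is controlled — in the real-kernel case $K$ has constant sign, in the complex case one localises the argument of $K$ to a small arc, which the smoothness modulus $\omega$ permits after shrinking the balls by an $A_0$-dependent factor). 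Then one integrates over $E$ against $g_E=\chi_E$, symmetrises by repeating the construction with the roles of $E$ and $\wt E$ interchanged (this is where the ``dual'' second term with $g_{\wt E}=\chi_{\wt E}$ and $h_E$ comes from), and subtracts to remove the constants $\ave{b}_{\wt E}^K$; bounding $\int_E|b-\ave{b}_E|\,d\mu$ by $2\int_E|b-c|\,d\mu$ for any constant $c$ absorbs the centering. The measure-ratio hypotheses $\mu(B)\le c\mu(E)$, $\mu(\wt B)\le c\mu(\wt E)$ are used exactly to convert averages over $E,\wt E$ into averages over $B,\wt B$ and back, so that the $\approx$ constants for $K$ and for $V(x,y)$ survive, with all implied constants depending only on $c_0,\bar C,c_K,C_\mu,A_0,c$.

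I expect the main obstacle to be the complex-valued bookkeeping in the weak factorisation: ensuring that a single bounded $h_{\wt E}$ (or a controlled finite family, whose contributions one then sums) makes $|\langle K(x,\cdot),h_{\wt E}\rangle|\gtrsim 1$ for \emph{all} $x\in E$ simultaneously, rather than just for one $x$. In \cite{Hyt2021} this is handled by the smoothness of $K$ — one shrinks $\wt B$ (and correspondingly $B$) by a small constant factor so that the Dini/$\omega$-modulus guarantees $K(x,y)$ is within a small multiplicative neighbourhood of $K(x_0,y_1)$ throughout $E\times\wt E$ — and the same works here, but the factor by which one must shrink now involves $A_0$ through the quasi-triangle inequality, and one must re-verify that the shrunk $\wt B$ still satisfies $d(B,\wt B)\approx r$ and still meets the measure-ratio requirement (which is why the statement allows a general $c\ge 1$: the shrinking is absorbed into $c$). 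A secondary technical point is confirming that $T h_E$, $T(b h_E)$ etc.\ are well-defined and the manipulations legitimate for $h_E\in L^\infty$ with bounded support: this follows from $T\in\mathrm{SIO}(K,L^1_{\rm bs}(X))$ together with the off-support integral representation and the kernel regularity \eqref{eq:regularityofkernel} established earlier in the excerpt, so it should be routine once the geometric setup is in place.
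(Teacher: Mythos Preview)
Your proposal captures the broad spirit (non-degeneracy plus kernel smoothness produce a companion ball on which the commutator controls oscillation), but it misses the specific mechanism that makes the approximate weak factorisation close, and the approach you sketch would not produce an absorbable error term.

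The paper does \emph{not} start from a pointwise identity for $b(x)-\text{const}$, nor from any choice of $h_{\wt E}$ as a sign/phase function. Instead it takes a mean-zero $f\in L_0^\infty(E)$ realising $\int_E|b-\ave{b}_E|\,d\mu=2|\langle b,f\rangle|$ and factors $f$ algebraically:
\[
f=gTh-hT^*g+\tilde f,\qquad h:=-\dfrac{f}{T^*g},\quad g:=\chi_{\wt E}.
\]
The point is that $T^*g(y)=K(x_0,y_0)\int_{\wt E}g+\int_{\wt E}(K(x,y)-K(x_0,y_0))g(x)\,d\mu(x)$, and by choosing the separation parameter $A$ large the $\omega$-smoothness makes the second integral $\le \eps$ times the first; so $|T^*g|$ is bounded below on $B$ with $g=\chi_{\wt E}$, no phase localisation or splitting needed, and this works for complex kernels directly. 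The error $\tilde f=-gTh$ lands in $L_0^\infty(\wt E)$ with $\|\tilde f\|_\infty\lesssim\eps\|f\|_\infty$. One then applies the \emph{same} factorisation a second time, now to $\tilde f$ with the roles of $K$ and $K^*$ (equivalently of $B$ and $\wt B$) interchanged; this produces the second pair $(g_2,h_2)$ and an error $\tilde{\tilde f}\in L_0^\infty(E)$ with $\|\tilde{\tilde f}\|_\infty\lesssim\eps\|f\|_\infty$. Because $\tilde{\tilde f}$ is mean-zero and supported back on $E$, one has $|\langle b,\tilde{\tilde f}\rangle|=|\langle b-\ave{b}_E,\tilde{\tilde f}\rangle|\le C\eps\int_E|b-\ave{b}_E|\,d\mu$, which is absorbed into the left-hand side by taking $A$ large enough. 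The two commutator terms in the statement are exactly the contributions $\langle g_i,[b,T]h_i\rangle$ from the two iterations, after one application of the duality Corollary~\ref{cor:tandtstarareadjointsxdmu} to turn $\langle bh_i,T^*g_i\rangle$ into $\langle g_i,T(bh_i)\rangle$.

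Your ``symmetrise and subtract to remove the constants'' step is where the gap lies: the weighted average $T(bh_{\wt E})(x)/Th_{\wt E}(x)$ depends on $x$, so the residual after your symmetrisation involves the oscillation of $b$ over $\wt E$, which you cannot absorb into $\int_E|b-\ave{b}_E|$. The two-iteration device above is precisely what transports the error back to $E$ so that absorption is possible. Likewise, the shrinking-of-balls / phase-arc localisation you anticipate is unnecessary here; the parameter governing smallness is the separation scale $A$ (the balls keep radius $r$), and the constant $c\ge1$ in the statement is for the user of the proposition, not to accommodate any shrinking inside the proof.
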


We begin by generalising a result of \cite{Hyt2021} for to the setting of spaces of homogeneous type. After that, we give a taste of the argument for Proposition \ref{prop:oikarisawfformulationforcalderonzygmundxdmu}.

\begin{proposition}\label{prop:propertiesofkernelsxdmu}
Let $K$ be an $\omega$-Calder\'{o}n-Zygmund kernel in $X$ that satisfies \eqref{eq-1.1opp}. Then for each $A\geq 2A_0^2+A_0$, there exists $\eps_A>0$ such that the following is true: for every ball $B=B(y_0,r)$, there is a ball $\wt{B}=B(x_0,r)$ at distance
\begin{equation}\label{eq:propertiesofkernelequaonexdmu}
d(B,\wt{B})\geq r
\end{equation}
such that: $Ar \leq d(x_0,y_0) < \bar{C}Ar$,
\begin{equation}\label{eq:propertiesofkernelequatwoxdmu}
|K(x_0,y_0)| \approx_{c_0,c_K,C_\mu,A_0} \frac{1}{\mu(B(y_0,Ar))},
\end{equation}
for all $x\in\wt{B}$ we have 
\begin{equation}\label{eq:propertiesofkernelequathreexdmuone}
\int_B |K(x,y)-K(x_0,y_0)| d \mu(y) \lesssim_{C_\mu,A_0} \eps_A\frac{\mu(B)}{\mu(B(y_0,Ar))}
\end{equation}
and for all $y\in B$ we have
\begin{equation}\label{eq:propertiesofkernelequathreexdmutwo}
\int_{\wt{B}} |K(x,y)-K(x_0,y_0)| d \mu(x) \lesssim_{C_\mu,A_0} \eps_A\frac{\mu(\wt{B})}{\mu(B(y_0,Ar))}.
\end{equation}
Additionally, $\eps_A\to0$ as $A\to\infty$.
\end{proposition}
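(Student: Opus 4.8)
Fix $A\ge 2A_0^2+A_0$ and a ball $B=B(y_0,r)$. The plan is to invoke the non-degeneracy hypothesis \eqref{eq-1.1opp} not at scale $r$ but at the dilated scale $Ar$: this produces a point $x_0\in B(y_0,\bar C A r)\setminus B(y_0,Ar)$ with $|K(x_0,y_0)|\ge\big(c_0\mu(B(y_0,Ar))\big)^{-1}$, and we set $\wt B:=B(x_0,r)$, a ball of the same radius $r$ as $B$. By construction $Ar\le d(x_0,y_0)<\bar C A r$. For the separation \eqref{eq:propertiesofkernelequaonexdmu} I would apply the quasi-triangle inequality twice: for $x\in\wt B$ and $y\in B$,
\[
Ar\le d(x_0,y_0)\le A_0 d(x_0,x)+A_0^2 d(x,y)+A_0^2 d(y,y_0)<A_0 r+A_0^2 d(x,y)+A_0^2 r,
\]
so $d(x,y)>\frac{(A-A_0-A_0^2)r}{A_0^2}\ge r$, where the last inequality is exactly the hypothesis $A\ge 2A_0^2+A_0$; taking the infimum over $x\in\wt B$, $y\in B$ gives $d(B,\wt B)\ge r$. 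The same bookkeeping, now also using $d(x_0,y_0)<\bar C A r$, shows $d(x,y)\approx Ar$ and $d(x,y_0)\approx Ar$ uniformly for $(x,y)\in\wt B\times B$, with constants depending on $A_0$ and $\bar C$.

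Next I would turn every volume factor into $\mu(B(y_0,Ar))$. Since the balls $B(x,d(x,y))$, $B(x_0,d(x_0,y_0))$ and $B(y_0,Ar)$ all have radius $\approx Ar$ and centres pairwise within distance $\approx Ar$, the doubling property \eqref{Bxy} yields
\[
V(x,y)=\mu(B(x,d(x,y)))\approx\mu(B(y_0,Ar))\approx\mu(B(x_0,d(x_0,y_0)))=V(x_0,y_0)
\]
for all $(x,y)\in\wt B\times B$, with constants depending only on $C_\mu,A_0,\bar C$. Combining this with the kernel size bound \eqref{eq:kernelsizeassumption} gives $|K(x_0,y_0)|\le c_K/V(x_0,y_0)\lesssim\mu(B(y_0,Ar))^{-1}$, which together with the non-degeneracy lower bound and the bounds $Ar\le d(x_0,y_0)<\bar C Ar$ already noted proves \eqref{eq:propertiesofkernelequatwoxdmu}.

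It remains to prove the integral estimates \eqref{eq:propertiesofkernelequathreexdmuone}--\eqref{eq:propertiesofkernelequathreexdmutwo} and to exhibit an admissible $\eps_A$ with $\eps_A\to0$. For \emph{every} admissible $A$ the trivial bound $|K(x,y)-K(x_0,y_0)|\le|K(x,y)|+|K(x_0,y_0)|\lesssim\mu(B(y_0,Ar))^{-1}$ (by \eqref{eq:kernelsizeassumption} and the previous paragraph) already gives both estimates with a fixed constant $\eps_A=C_0=C_0(c_K,C_\mu,A_0,\bar C)$. To obtain decay I would write
\[
|K(x,y)-K(x_0,y_0)|\le|K(x,y)-K(x_0,y)|+|K(x_0,y)-K(x_0,y_0)|
\]
and use the $\omega$-smoothness of $K$ in Definition \ref{defi1.1} in the first and in the second variable respectively. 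Its hypothesis, which carries the factor $(2A_0)^{-1}$, becomes available once $A$ exceeds a threshold $A_1=A_1(A_0)$ (of size $\sim A_0^3$), because $d(x,x_0)<r$ and $d(y,y_0)<r$ while $d(x,y),d(x_0,y)\gtrsim Ar/A_0^2$; for such $A$, using $V\approx\mu(B(y_0,Ar))$ and monotonicity of $\omega$,
\[
|K(x,y)-K(x_0,y_0)|\lesssim\frac{1}{\mu(B(y_0,Ar))}\,\omega\!\Big(\frac{C}{A}\Big),
\]
and integrating in $y$ over $B$ (resp.\ in $x$ over $\wt B$) gives \eqref{eq:propertiesofkernelequathreexdmuone}--\eqref{eq:propertiesofkernelequathreexdmutwo} with $\eps_A\approx\omega(C/A)$. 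I would therefore set $\eps_A:=C_0$ for $2A_0^2+A_0\le A<A_1$ and $\eps_A:=C'\omega(C/A)$ for $A\ge A_1$; then all of \eqref{eq:propertiesofkernelequaonexdmu}--\eqref{eq:propertiesofkernelequathreexdmutwo} hold and $\eps_A\to0$ as $A\to\infty$, since $\omega$ is continuous with $\omega(0)=0$.

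The genuinely routine part is the repeated use of the quasi-triangle inequality and of doubling. The delicate point --- and the one I would be most careful about --- is the bookkeeping of the geometric constants: checking that the numerical value $2A_0^2+A_0$ in the hypothesis is precisely what forces $d(B,\wt B)\ge r$, and that the smoothness clause of Definition \ref{defi1.1} (whose hypothesis involves the factor $(2A_0)^{-1}$) is indeed available for all sufficiently large $A$, which is all that the conclusion $\eps_A\to0$ requires.
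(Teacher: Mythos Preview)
Your proposal is correct and follows essentially the same approach as the paper: invoke \eqref{eq-1.1opp} at scale $Ar$ to locate $x_0$, verify $d(B,\wt B)\ge r$ via two applications of the quasi-triangle inequality (this is exactly where the threshold $2A_0^2+A_0$ enters), compare all the volumes $V(\cdot,\cdot)$ to $\mu(B(y_0,Ar))$ by doubling, and then use the $\omega$-smoothness pointwise before integrating. The only notable difference is that the paper splits through the intermediate point $(x,y_0)$ rather than $(x_0,y)$ and checks that the smoothness hypothesis $d(\cdot,\cdot)<(2A_0)^{-1}d(\cdot,\cdot)$ is already available for \emph{every} $A\ge 2A_0^2+A_0$, yielding the single formula $\eps_A=\omega\big(\tfrac{1}{A_0^{-1}A-1}\big)+\omega\big(\tfrac{1}{A}\big)$ and making your two-case definition of $\eps_A$ (and the auxiliary threshold $A_1$) unnecessary.
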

\begin{proof}
Let $A\geq 2A_0^2+A_0$ and let $B=B(y_0,r)$ be a ball. By \eqref{eq-1.1opp}, there exists a point $x_0\in B(y_0,\bar{C}Ar)\setminus B(y_0,Ar)$ such that 
\begin{align}\label{eq:vsameasoneoverk}
\frac{1}{\mu(B(y_0,Ar))}\lesssim_{c_0} |K(x_0,y_0)|\lesssim_{c_K} \frac{1}{V(x_0,y_0)}\approx_{C_\mu,A_0} \frac{1}{V(y_0,x_0)}&=\frac{1}{\mu(B(y_0,d(x_0,y_0)))} \nonumber \\
&\leq \frac{1}{\mu(B(y_0,Ar))}.
\end{align}
Let $\wt{B}:=B(x_0,r)$. It holds that
\[
V(x_0,y_0) \approx_{c_0,c_K,C_\mu,A_0} \mu(B(y_0,Ar)) \ \text{ and } \ |K(x_0,y_0)|\approx_{c_0,c_K,C_\mu,A_0} \frac{1}{\mu(B(y_0,Ar))}.
\]
Note that by applying the quasi-triangle inequality twice, we get for all $x\in \wt{B}$ and $y\in B$ that
\begin{align*}
d(x,y)\geq A_0^{-1}d(x,y_0)-d(y,y_0)&\geq A_0^{-2}d(x_0,y_0)-A_0^{-1}d(x,x_0)-d(y,y_0) \\
&> (A_0^{-2}A-A_0^{-1}-1)r \\
&\overset{A\geq 2A_0^2+A_0}{\geq} r.
\end{align*}
Thus $d(B,\wt{B})\geq r$.

Let $x\in \wt{B}$ and $y\in B$. Then 
\begin{align}\label{eq:yyzeroxyzeroareadmissible}
d(y,y_0)<r\overset{A \geq 2A_0^2+A_0}{\leq} 2^{-1}(AA_0^{-2}r-A_0^{-1}r) &< 2^{-1}(A_0^{-2}d(x_0,y_0)-A_0^{-1}d(x,x_0)) \nonumber \\ &\leq (2A_0)^{-1}d(x,y_0)
\end{align}
and
\[
d(x,x_0)<r\overset{A>2A_0}{<} (2A_0)^{-1}Ar \leq (2A_0)^{-1}d(x_0,y_0).
\]
Hence we can write 
\begin{align*}
&|K(x,y)-K(x_0,y_0)| \\
&\leq |K(x,y)-K(x,y_0)|+|K(x,y_0)-K(x_0,y_0)| \\
&\leq \frac{1}{V(x,y_0)}\omega\big(\frac{d(y,y_0)}{d(x,y_0)}\big)+\frac{1}{V(x_0,y_0)}\omega\big(\frac{d(x,x_0)}{d(x_0,y_0)}\big) \\\\
&\leq \frac{1}{V(x,y_0)}\omega\big(\frac{r}{A_0^{-1}Ar-r}\big)+\frac{1}{V(x_0,y_0)}\omega\big(\frac{r}{Ar}\big) \\
&= \frac{1}{\mu(B(y_0,Ar))}\Big(\frac{\mu(B(y_0,Ar))}{V(x,y_0)}\omega\big(\frac{1}{A_0^{-1}A-1}\big)+\frac{\mu(B(y_0,Ar))}{V(x_0,y_0)}\omega\big(\frac{1}{A}\big)\Big).
\end{align*}
Note that if $z\in B(x_0,d(x_0,y_0))$, then
\begin{align*}
d(z,x) &\leq A_0 d(z,x_0)+A_0 d(x_0,x) \\
 &< A_0 d(x_0,y_0)+A_0r \\
&\leq A_0(A_0d(x_0,x)+A_0d(x,y_0))+A_0r \\
&< A_0^2r+A_0^2d(x,y_0)+A_0r \\
&\overset{\eqref{eq:yyzeroxyzeroareadmissible}}{<} 2^{-1}A_0d(x,y_0)+A_0^2d(x,y_0)+2^{-1}d(x,y_0) \\
&=(A_0^2+2^{-1}A_0+2^{-1})d(x,y_0)
\end{align*}
Thus 
\[
B(x_0,d(x_0,y_0))\subset B(x,(A_0^2+2^{-1}A_0+2^{-1})d(x,y_0)).
\]
Hence
\begin{equation}\label{eq:propertiesofkernelequafourxdmu}
V(x_0,y_0)\leq \mu(B(x,(A_0^2+2^{-1}A_0+2^{-1})d(x,y_0)))\leq C_\mu(A_0^2+2^{-1}A_0+2^{-1})^\mathbb{Q}V(x,y_0). 
\end{equation}
We use 
\[
\frac{\mu(B(y_0,Ar))}{V(x,y_0)} \overset{\eqref{eq:propertiesofkernelequafourxdmu}}{\lesssim_{C_\mu,A_0}} \frac{\mu(B(y_0,Ar))}{V(x_0,y_0)}\overset{\eqref{eq:vsameasoneoverk}}{\lesssim_{C_\mu,A_0}} 1.
\]
This final inequality chain allows us to continue one of our previous estimates to get
\begin{align*}
|K(x,y)-K(x_0,y_0)|&\lesssim_{C_\mu,A_0} \frac{1}{\mu(B(y_0,Ar))}\Big(\omega\big(\frac{1}{A_0^{-1}A-1}\big)+\omega\big(\frac{1}{A}\big)\Big) \\
&=:\frac{\eps_A}{\mu(B(y_0,Ar))}.
\end{align*}
It then holds that $\eps_A\to 0$ as $A\to \infty$, because $\omega$ is a modulus of continuity. Then integrating over $y\in B$ or $x\in \wt{B}$, we get for all $x_1\in \wt{B}$ and $y\in B$ that 
\begin{align*}
\int_B |K(x_1,y)-K(x_0,y_0)| d \mu(y)\lesssim_{C_\mu,A_0} \eps_A\frac{\mu(B)}{\mu(B(y_0,Ar))},
\end{align*}
and 
\begin{align*}
\int_{\wt{B}} |K(x,y_1)-K(x_0,y_0)| d \mu(x) \lesssim_{C_\mu,A_0} \eps_A\frac{\mu(\wt{B})}{\mu(B(y_0,Ar))}.
\end{align*}
\end{proof}

How to use the norm of the commutator $[b,T]$ to bound oscillations of $b$ over balls $B$? The idea, that is taken from \cite{Hyt2021}, is to use a decomposition 
\[
f=-\frac{-f}{T^*g}T^*g=:-hT^*g=gTh-hT^*g-gTh=:gTh-hT^*g+\tilde{f},
\]
where $T^*$ is in some sense the transpose of $T$ and $f$ is in the dual of the space that $b$ belongs to and supported in the ball $B$. Here one has to check that one does not divide by zero when one divides by $T^*g$. Proposition \ref{prop:propertiesofkernelsxdmu} will help there. Then we could formally write, when $f$ is chosen so that $| \langle b, f \rangle |$ approximates $\int_B |b-\ave{b}_B|$,
\begin{align*}
  \int_B |b-\ave{b}_B| \eqsim | \langle b, f \rangle | = | \langle b, gTh\rangle - \langle b, hT^*g \rangle + \langle b, \tilde{f} \rangle | &= | \langle bTh, g\rangle - \langle T(bh), g \rangle + \langle b, \tilde{f} \rangle | \\
  &= | \langle [b,T]h, g\rangle + \langle b, \tilde{f} \rangle | \\
  &\leq | \langle [b,T]h, g\rangle | + | \langle b, \tilde{f} \rangle |.
\end{align*}
This inequality chain provides information that the dual pairing for the commutator bounds oscillations of $b$, modulo an error term. The aim is to absorb the error term $| \langle b, \tilde{f} \rangle |$ to the left-hand side of this inequality chain. To make this possible, we use our decomposition once for $\tilde{f}$ so that our upper bound will in the end have a sum of two dual pairings for the commutator and a suitable error term $| \langle b, \tilde{\tilde{f}} \rangle |$. This second iteration of the decomposition will also make it so that we do not need to qualitatively assume a priori that $b$ has bounded mean oscillation: it is enough to assume local integrability of $b$.

Details will be provided. For the purposes of proving a lower bound for the commutator norm $\|[b,T]\|$, the boundedness of $T$ does not play a role. Thus we work with the assumption that $T$ is as described in Definition \ref{defi:notnecessarilyboundedsio} and additionally satisfies the condition \eqref{eq-1.1opp}.

\begin{defi}\label{defi:notnecessarilyboundedsio}
Let $K$ be an $\omega$-Calder\'{o}n-Zygmund kernel in $X$.

Suppose $\mathcal{F}\subset L^1(X)$. Suppose that the mapping $T\colon \mathcal{F}\to L^0(X)$ satisfies the following requirement: for $f\in \mathcal{F}$ and for every $x\in X\setminus \supp(f)$,
\begin{equation}\label{eq:whatsioshavetoatminimumsatisfyxdmu}
Tf(x)=\int_{X}K(x,y)f(y) d \mu(y).
\end{equation}
Then we write $T\in \mathrm{SIO}(K,\mathcal{F})$.

Note that the integral \eqref{eq:whatsioshavetoatminimumsatisfyxdmu} converges absolutely; a consequence of the upper bound for $|K|$. One should think that $T$ is a kind of singular integral mapping on its domain $\mathcal{F}$, associated with the kernel $K$.
\end{defi}

We now define the commutator for our not necessarily bounded singular integral mappings.

\begin{defi}
Suppose that $K$ is an $\omega$-Calder\'{o}n-Zygmund kernel in $X$ and suppose that $T\in \mathrm{SIO}(K,L^1_{\rm bs}(X))$. Let $b\in L^1_{\mathrm{loc}}(X)$.

We define the \emph{commutator} $[b,T]\colon L^\infty_{\rm bs}(X)\to L^0(X)$ by setting
\[
[b,T]f:=bTf-T(bf).
\]
\end{defi}

\begin{remark}
Note that the above definition of a commutator is well-set, because $f\in L^\infty_{\rm bs}(X)\subset L^1_{\rm bs}(X)$ and $bf\in L^1_{\rm bs}(X)$, when $f\in L^\infty_{\rm bs}(X)$. Also, it agrees with Definition \ref{defi:czcommutator}.
\end{remark}

For an $\omega$-Calder\'{o}n-Zygmund kernel $K$ in $X$, we write $K^*(x,y):=K(y,x)$. It is quite immediate that then $K^*$ is also an $\omega$-Calder\'{o}n-Zygmund kernel in $X$. We record this as a lemma.

\begin{lemma}\label{lemma:adjointkerneliskernel}
    Suppose $K$ is an $\omega$-Calder\'{o}n-Zygmund kernel in $X$. Then $K^*$ is also an $\omega$-Calder\'{o}n-Zygmund kernel in $X$. In particular, we have
    \begin{equation}\label{eq:adjointkernelsizeassumption}
    |K^*(x,y)|\leq \frac{C_\mu (2A_0)^{\QQ}c_K}{V(x,y)},
    \end{equation}
    where $x\neq y$ and $c_K$ is the constant for $K$ from \eqref{eq:kernelsizeassumption}.
\end{lemma}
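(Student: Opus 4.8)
The plan is to verify directly the two defining clauses of Definition \ref{defi1.1} for $K^*(x,y):=K(y,x)$, keeping the very same modulus $\omega$, which trivially retains the required properties (continuous, increasing, subadditive, $\omega(0)=0$). Note first that $K^*$ has the correct domain: $(x,y)\in(X\times X)\setminus\{(x,x):x\in X\}$ iff $x\neq y$ iff $(y,x)$ lies in the same set, so $K^*$ is well-defined there.

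For the size bound \eqref{eq:adjointkernelsizeassumption} I would make quantitative the comparison $V(x,y)\approx V(y,x)$ noted after Definition \ref{defi1.1}. If $z\in B(x,d(x,y))$, then by the quasi-triangle inequality $d(y,z)\le A_0\big(d(y,x)+d(x,z)\big)<2A_0\,d(x,y)$, hence $B(x,d(x,y))\subset B(y,2A_0\,d(x,y))$. Since $2A_0\ge 1$, the doubling property of $\mu$ (in the form $\mu(B(y,\lambda \rho))\le C_\mu\lambda^{\QQ}\mu(B(y,\rho))$ for $\lambda\ge 1$, with $\QQ=\log_2 C_\mu$) gives
\[
V(x,y)=\mu\big(B(x,d(x,y))\big)\le \mu\big(B(y,2A_0\,d(x,y))\big)\le C_\mu (2A_0)^{\QQ}\,\mu\big(B(y,d(x,y))\big)=C_\mu (2A_0)^{\QQ}V(y,x).
\]
Combining this with the size estimate \eqref{eq:kernelsizeassumption} for $K$ and using $d(x,y)=d(y,x)$, we obtain $|K^*(x,y)|=|K(y,x)|\le c_K/V(y,x)\le C_\mu(2A_0)^{\QQ}c_K/V(x,y)$, which is exactly \eqref{eq:adjointkernelsizeassumption}; so $K^*$ satisfies the size clause with constant $c_{K^*}=C_\mu(2A_0)^{\QQ}c_K$.

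For the smoothness clause I would simply observe that the combined Hölder condition of Definition \ref{defi1.1} is symmetric under interchanging the two kernel arguments. Indeed, for $d(x,x')<(2A_0)^{-1}d(x,y)$,
\[
|K^*(x,y)-K^*(x',y)|+|K^*(y,x)-K^*(y,x')|=|K(y,x)-K(y,x')|+|K(x,y)-K(x',y)|,
\]
and the right-hand side is precisely the left-hand side of the smoothness hypothesis for $K$, hence $\le V(x,y)^{-1}\omega\big(d(x,x')/d(x,y)\big)$. Thus $K^*$ satisfies the smoothness clause verbatim (no constant adjustment is needed, since that clause carries no kernel constant and $V(x,y)$ is independent of the kernel). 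This completes the verification. There is no genuine obstacle here; the only bookkeeping point worth care is the ball inclusion and the doubling exponent entering the size constant, which is why the statement is recorded merely as a lemma.
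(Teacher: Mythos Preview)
Your proof is correct and is precisely the direct verification the paper has in mind; the paper itself does not write out a proof, merely remarking that the claim is ``quite immediate'' and recording it as a lemma, so your argument supplies exactly the missing details (the ball inclusion $B(x,d(x,y))\subset B(y,2A_0 d(x,y))$ via the quasi-triangle inequality, the doubling bound yielding the explicit constant $C_\mu(2A_0)^{\QQ}$, and the observation that the smoothness clause is symmetric under swapping arguments).
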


We now present a practical lemma that essentially is just checking that we may change integration order in certain double integrals.

\begin{lemma}\label{lemma:tandtstarareadjointsxdmu}
Suppose $K$ is an $\omega$-Calder\'{o}n-Zygmund kernel in $X$. Suppose that $O$ and $P$ are balls and that $d(O,P)>0$. Suppose $f\in L^\infty(O)$ and $g\in L^\infty(P)$. Suppose $b\in L^1_\mathrm{loc}(X)$. Then
\[
\int_{P} \int_{O} b(x)K(x,y)f(y)g(x) d \mu(y) d \mu(x)=\int_{O} \int_{P} b(x)K^*(y,x)f(y)g(x) d \mu(x) d \mu(y),
\]
where the both the inner and outer integrals converge absolutely.
\end{lemma}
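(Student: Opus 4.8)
The plan is to prove this Fubini-type statement by reducing the double integrals to honest applications of Tonelli's and Fubini's theorems on the product measure space $P \times O$ (equivalently $O \times P$). First I would fix the setting: since $d(O,P)>0$, Lemma \ref{lemma:posdistanceimpliesseparation} tells us that the closures of $O$ and $P$ are disjoint, so in particular no point of $O$ equals a point of $P$, and $K(x,y)$ (hence $K^*(y,x)=K(x,y)$) is well-defined and finite for all $(x,y)\in O\times P$. Moreover, for such pairs one has $d(x,y)\geq d(O,P)>0$, so $V(x,y)=\mu(B(x,d(x,y)))\geq \mu(B(x,d(O,P)))$, and by the doubling condition this is bounded below by a positive constant uniformly for $x$ ranging over the bounded set $O$ (cover $O$ by finitely many balls, or note $O\subset B(x_O,R)$ and compare $B(x,d(O,P))$ with $B(x_O, A_0(R+d(O,P)))$ using doubling). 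Combined with the kernel size bound \eqref{eq:kernelsizeassumption} this gives a uniform bound $|K(x,y)|\leq C(O,P,c_K,C_\mu,A_0)$ on $O\times P$.

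The key step is then the absolute-integrability estimate. With $f\in L^\infty(O)$, $g\in L^\infty(P)$, and $b\in L^1_{\mathrm{loc}}(X)$, the integrand $|b(x)K(x,y)f(y)g(x)|$ is bounded pointwise on $P\times O$ by
\[
\|f\|_{L^\infty(X)}\,\|g\|_{L^\infty(X)}\,C(O,P)\,|b(x)|\,\chi_O(y)\,\chi_P(x),
\]
and its integral over $P\times O$ (with respect to the product measure) is at most
\[
\|f\|_{L^\infty(X)}\|g\|_{L^\infty(X)}C(O,P)\,\mu(O)\int_{P}|b(x)|\,d\mu(x) < \infty,
\]
since $\mu(O)<\infty$ (balls have finite measure) and $\int_P |b|\,d\mu<\infty$ because $b\in L^1_\mathrm{loc}(X)$ and $P$ is a ball. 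Here I am using Tonelli's theorem on the product of the two $\sigma$-finite measure spaces $(O,\mu)$ and $(P,\mu)$ — $\sigma$-finiteness holds since $X$ can be exhausted by balls of finite measure, or more simply since $O$ and $P$ themselves have finite measure. Once the double integral of the absolute value is finite, Fubini's theorem applies to the (complex-valued) integrand $b(x)K(x,y)f(y)g(x)$ and lets us interchange the order of the iterated integrals; rewriting $K(x,y)=K^*(y,x)$ in one of the orders gives exactly the claimed identity, and the statement about absolute convergence of both inner and outer integrals is part of Fubini's conclusion (for a.e. fixed outer variable the inner integral converges absolutely, and the resulting function of the outer variable is again absolutely integrable).

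One mild technical point to address carefully is measurability: one must know that $(x,y)\mapsto b(x)K(x,y)f(y)g(x)$ is product-measurable on $O\times P$. Measurability of $b(x)g(x)$ in $x$ and $f(y)$ in $y$ is clear; the only nontrivial ingredient is joint measurability of $K$ on $O\times P$, which follows from $K$ being defined (and continuous in each variable, or at least Borel) on the open set $(X\times X)\setminus\{(x,x)\}$ together with $\overline{O}\times\overline{P}$ being a compact-free product contained in that set; since balls are Borel and the quasi-metric $d$ is Borel measurable on $X\times X$ (it is continuous in the topology induced by an equivalent metric, cf.\ \cite{MS1979}), $K$ restricted to $O\times P$ is Borel, hence product-measurable. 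I do not expect this to be a real obstacle — it is the routine bookkeeping that the lemma is explicitly designed to dispatch — so the "hard part" is really just assembling the uniform lower bound on $V(x,y)$ over $O\times P$ from the doubling property, which is a short computation.
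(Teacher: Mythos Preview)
Your proposal is correct and follows essentially the same approach as the paper: bound $|K(x,y)|$ uniformly on the product of the two balls by combining the kernel size estimate \eqref{eq:kernelsizeassumption} with a lower bound on $V(x,y)$ coming from the doubling property, then apply Tonelli/Fubini using that $b\in L^1_{\mathrm{loc}}(X)$ and $\mu(O),\mu(P)<\infty$. The paper compares $\mu(B(x,s))$ to $\mu(P)$ via doubling (with $s=d(O,P)$), while you compare to a fixed ball containing the ambient ball; both are the same idea. One small slip: in your first paragraph you sometimes write $(x,y)\in O\times P$ when the integrand forces $x\in P$, $y\in O$, but this does not affect the argument.
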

\begin{proof}
We first note that the inner integrals are absolutely convergent for all $x\in P$ or for all $y\in O$, respectively. This is because $f\in L^1(X)$, $bg\in L^1(X)$ and because the closures of $O$ and $P$ in $X$ are disjoint (by Lemma \ref{lemma:posdistanceimpliesseparation}).

Note that $(x,y)\mapsto b(x)K(x,y)f(y)g(x)$ is measurable $P\times O\to\CC$. We further justify the use of Fubini's theorem by proving that
\[
\int_P \int_O |b(x)K(x,y)f(y)g(x)| d \mu(y) d \mu(x)<\infty.
\]
Denote the radius of $P$ by $r$ and set $s=d(O,P)$. Let $(x,y)\in P\times O$. We have
\[
V(x,y)=\mu(B(x,d(x,y)))\geq \mu(B(x,s)).
\]
Note that $P\subset B(x,2A_0r)=B(x,cs)$, where $c:=2A_0rs^{-1}$. In case $c\leq 1$, we see that $P\subset B(x,s)$. If on the other hand $c>1$, then
\[
\mu(P)\leq \mu(B(x,cs))\leq C_\mu c^\mathbb{Q}\mu(B(x,s)).
\]
Thus in any case 
\[
V(x,y)\geq \mu(B(x,s))\gtrsim_{C_\mu,A_0,r,s} \mu(P)>0.
\]
Using this estimate, we see that 
\begin{align*}
&\int_P \int_O |b(x)K(x,y)f(y)g(x)| d \mu(y) d \mu(x) \leq \|f\|_{L^\infty}\|g\|_{L^\infty}\int_P |b(x)|\int_O |K(x,y)| d \mu(y) d \mu(x) \\
&\lesssim_{P,C_\mu,A_0}\|f\|_{L^\infty}\|g\|_{L^\infty}\frac{c_K\mu(O)}{\mu(P)}\int_P |b(x)| d \mu(x) <\infty.
\end{align*}
Thus we may exchange the integration order to get the claim.
\end{proof}

\begin{corollary}\label{cor:tandtstarareadjointsxdmu}
Suppose $K$ is an $\omega$-Calder\'{o}n-Zygmund kernel in $X$. Suppose that $O$ and $P$ are balls and that $d(O,P)>0$. Suppose $f\in L^\infty(O)$ and $g\in L^\infty(P)$. 
\begin{itemize}
\item[(i)] Suppose that $b\in L^\infty(X)$, $T\in \mathrm{SIO}(K,L^\infty_{\rm bs}(X))$ and $T^*\in \mathrm{SIO}(K^*,L^\infty_{\rm bs}(X))$. Then
\[
\int_X Tf\cdot bg d \mu=\int_X f\cdot T^*(bg) d \mu.
\]
\item[(ii)]
Suppose that $b\in L^1_\mathrm{loc}(X)$, $T\in \mathrm{SIO}(K,L^1_{\rm bs}(X))$ and $T^*\in \mathrm{SIO}(K^*,L^1_{\rm bs}(X))$. Then
\[
\int_X Tf\cdot bg d \mu=\int_X f\cdot T^*(bg) d \mu.
\]
\end{itemize}
\end{corollary}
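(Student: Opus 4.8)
The plan is to reduce both parts of the corollary to Lemma~\ref{lemma:tandtstarareadjointsxdmu}, which already carries the only analytic content (a change of integration order justified by the kernel size estimate); the two cases here differ only in which class of admissible inputs --- $L^\infty_{\rm bs}(X)$ or $L^1_{\rm bs}(X)$ --- the relevant functions belong to, so essentially the same argument handles both.

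First I would take care of the admissibility bookkeeping. Since $g=\chi_P g$ and $\mu(P)<\infty$, in case (i) we have $bg\in L^\infty(P)\subset L^\infty_{\rm bs}(X)$, and in case (ii) $\int_P|bg|\,d\mu\le\|g\|_{L^\infty(X)}\int_P|b|\,d\mu<\infty$, so $bg\in L^1_{\rm bs}(X)$; in either case $bg$ lies in the domain of $T^*$. Likewise $f=\chi_O f\in L^\infty(O)$ lies in the domain of $T$: it is in $L^\infty_{\rm bs}(X)$ for (i), and in $L^1_{\rm bs}(X)$ for (ii) since $\mu(O)<\infty$. Because $d(O,P)>0$, Lemma~\ref{lemma:posdistanceimpliesseparation} tells us the closures of $O$ and $P$ are disjoint; as $\supp(f)$ lies in the closure of $O$ and $\supp(bg)\subset\supp(g)$ lies in the closure of $P$, it follows that every $x\in P$ satisfies $x\notin\supp(f)$ and every $y\in O$ satisfies $y\notin\supp(bg)$. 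The $\mathrm{SIO}$ property then gives, for $x\in P$ and $y\in O$,
\[
Tf(x)=\int_X K(x,y)f(y)\,d\mu(y)=\int_O K(x,y)f(y)\,d\mu(y),\qquad T^*(bg)(y)=\int_P K^*(y,x)b(x)g(x)\,d\mu(x).
\]

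Next I would assemble the identity. Since $bg$ vanishes off $P$ and $f$ vanishes off $O$,
\[
\int_X Tf\cdot bg\,d\mu=\int_P\Big(\int_O K(x,y)f(y)\,d\mu(y)\Big)b(x)g(x)\,d\mu(x)
\]
and
\[
\int_X f\cdot T^*(bg)\,d\mu=\int_O f(y)\Big(\int_P K^*(y,x)b(x)g(x)\,d\mu(x)\Big)\,d\mu(y).
\]
Applying Lemma~\ref{lemma:tandtstarareadjointsxdmu} --- valid here because $f\in L^\infty(O)$, $g\in L^\infty(P)$ and $b\in L^1_{\rm loc}(X)$, the latter covering case (i) via $L^\infty(X)\subset L^1_{\rm loc}(X)$ --- identifies these two iterated integrals and guarantees absolute convergence throughout, which is exactly the desired equality in both cases.

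I do not expect a genuine obstacle; the proof is essentially support bookkeeping on top of Lemma~\ref{lemma:tandtstarareadjointsxdmu}. The one place needing care is precisely the step above: one must match the hypotheses to the two possible domains $L^\infty_{\rm bs}(X)$ and $L^1_{\rm bs}(X)$ so that $Tf$ and $T^*(bg)$ are legitimately defined and admit the pointwise kernel representations that make Lemma~\ref{lemma:tandtstarareadjointsxdmu} applicable.
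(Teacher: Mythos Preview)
Your proposal is correct and is precisely the intended argument: the paper states this result as an immediate corollary of Lemma~\ref{lemma:tandtstarareadjointsxdmu} without giving a separate proof, and your write-up supplies exactly the routine bookkeeping (domain checks for $f$ and $bg$, support separation via Lemma~\ref{lemma:posdistanceimpliesseparation}, and the kernel representation from the $\mathrm{SIO}$ property) needed to invoke that lemma.
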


In Corollary \ref{cor:tandtstarareadjointsxdmu} and in what follows, when $T$ is a singular integral (in the sense of Definition \ref{defi:notnecessarilyboundedsio}) having kernel $K$, the notation $T^*$ will just be used to denote a singular integral having kernel $K^*$. In particular, $T^*$ is not read here as ``the adjoint operator of $T$''. Although, from Corollary \ref{cor:tandtstarareadjointsxdmu} it is immediate that such a $T^*$ has properties similar to an adjoint operator.
\medskip

\underline{Notation:} For this section only, we use the following notation: Throughout, we suppose that $K$ is an $\omega$-Calder\'{o}n-Zygmund kernel in $X$. 
Suppose $\xi\geq 1$, $A\geq 2A_0^2+A_0$, $\eps>0$, $B=B(y_0,r)$ and $\wt{B}=B(x_0,r)$ are such that the following statements hold:
\begin{equation}\label{eq:kernelpropproof0xdmu}
d(B,\wt{B})\geq r,
\end{equation}
\begin{equation}\label{eq:kernelpropproof1xdmu}
Ar\leq d(x_0,y_0)\leq \xi Ar,
\end{equation}
\begin{equation}\label{eq:kernelpropproof2xdmu}
|K(x_0,y_0)|\leq \xi \frac{1}{\mu(B(y_0,Ar))} \quad \text{ and } \quad \frac{1}{\mu(B(y_0,Ar))}\leq \xi |K(x_0,y_0)|,
\end{equation}
for all $x\in \wt{B}$ we have
\begin{equation}\label{eq:kernelpropproof3xdmu}
\int_B |K(x,y)-K(x_0,y_0)| d \mu(y) \leq \xi \eps \frac{\mu(B)}{\mu(B(y_0,Ar))},
\end{equation}
and for all $y\in B$ we have
\begin{equation}\label{eq:kernelpropproof4xdmu}
\int_{\wt{B}} |K(x,y)-K(x_0,y_0)| d \mu(x) \leq \xi \eps \frac{\mu(\wt{B})}{\mu(B(y_0,Ar))}.
\end{equation}
For the purposes of this section, we call such a sextuple $(K,\xi,A,\eps,B,\wt{B})$ \emph{admissible}.

The motivation behind this notion is that a kernel satisfying \eqref{eq-1.1opp} is admissible by Proposition \ref{prop:propertiesofkernelsxdmu}. Additionally, unlike the condition \eqref{eq-1.1opp}, it is easy to see that admissibility of $K$ is transferred to admissibility of $K^*$ as the following proposition shows. Thus because we will use the same decomposition result for $K$ and then to $K^*$, we will formulate the decomposition result Lemma \ref{lemma:hytonenlemma2.5xdmu} with the assumption of an admissible kernel as opposed to an assumption of the kernel satisfying \eqref{eq-1.1opp}.

\begin{proposition}
Suppose $(K,\xi,A,\eps,B,\wt{B})$ is admissible. Then $(K^*,\xi^*,A,\eps,\wt{B},B)$ is admissible, where
\[
\xi^* = \xi C_\mu(A_0(1+\xi ))^\mathbb{Q}.
\]
\end{proposition}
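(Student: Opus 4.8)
The plan is to verify the six conditions in the definition of admissibility for the sextuple $(K^*,\xi^*,A,\eps,\wt{B},B)$ one at a time, using the corresponding condition for $(K,\xi,A,\eps,B,\wt{B})$ together with the identity $K^*(x,y)=K(y,x)$ and the symmetry $d(x,y)=d(y,x)$. Write $B=B(y_0,r)$ and $\wt{B}=B(x_0,r)$. The condition \eqref{eq:kernelpropproof0xdmu} that $d(\wt{B},B)\geq r$ is immediate since $d$ is symmetric on sets. Condition \eqref{eq:kernelpropproof1xdmu} for the new sextuple asks for $Ar\leq d(x_0,y_0)\leq \xi^* Ar$; the lower bound is unchanged and the upper bound follows from $d(x_0,y_0)\leq \xi Ar\leq \xi^* Ar$ since $\xi^*\geq \xi$. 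So far no loss is incurred and $\xi$ would suffice.

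The genuine point where the constant must be enlarged is the pair of conditions \eqref{eq:kernelpropproof2xdmu} and the integral conditions \eqref{eq:kernelpropproof3xdmu}--\eqref{eq:kernelpropproof4xdmu}, all of which are phrased in terms of the quantity $\mu(B(y_0,Ar))$ attached to the \emph{center} $y_0$ of $B$. For $K^*$ the roles of $x_0$ and $y_0$ swap, so the natural normalising quantity becomes $\mu(B(x_0,Ar))$, and one must compare $\mu(B(x_0,Ar))$ with $\mu(B(y_0,Ar))$. First I would show these two are comparable with constant $C_\mu(A_0(1+\xi))^\mathbb{Q}$: since $d(x_0,y_0)\leq \xi Ar$ by \eqref{eq:kernelpropproof1xdmu}, the quasi-triangle inequality gives $B(y_0,Ar)\subset B(x_0,A_0(Ar+\xi Ar))=B(x_0,A_0(1+\xi)Ar)$, and then doubling yields $\mu(B(y_0,Ar))\leq C_\mu(A_0(1+\xi))^\mathbb{Q}\mu(B(x_0,Ar))$; the reverse containment (with $y_0$ and $x_0$ interchanged) gives the other inequality with the same constant. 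With this comparison in hand, \eqref{eq:kernelpropproof2xdmu} for $K^*$ reads $|K^*(x_0,y_0)|=|K(y_0,x_0)|=|K(x_0,y_0)|$ (note the kernel value at the center pair is symmetric here because we literally evaluate $K^*(x_0,y_0)=K(y_0,x_0)$; one has $|K(y_0,x_0)|$, which by the size bound and doubling is $\approx |K(x_0,y_0)|$, but more directly: we just re-use \eqref{eq:kernelpropproof2xdmu} for $K$ and replace $\mu(B(y_0,Ar))$ by $\mu(B(x_0,Ar))$ at the cost of the factor $C_\mu(A_0(1+\xi))^\mathbb{Q}$), so the new constant $\xi^*=\xi C_\mu(A_0(1+\xi))^\mathbb{Q}$ works. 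For the integral conditions, $\int_{\wt{B}}|K^*(x,y)-K^*(x_0,y_0)|\,d\mu(x)=\int_{\wt{B}}|K(y,x)-K(y_0,x_0)|\,d\mu(x)$, which for fixed $y\in B$ is exactly the left-hand side of \eqref{eq:kernelpropproof4xdmu}; it is $\leq \xi\eps\,\mu(\wt{B})/\mu(B(y_0,Ar))\leq \xi^*\eps\,\mu(\wt{B})/\mu(B(x_0,Ar))$ after the center comparison, which is the required bound \eqref{eq:kernelpropproof3xdmu} for $K^*$ (with $B$ replaced by $\wt{B}$ and $x_0$ playing the role of the center). Symmetrically \eqref{eq:kernelpropproof3xdmu} for $K$ gives \eqref{eq:kernelpropproof4xdmu} for $K^*$.

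I expect the only real subtlety — the ``main obstacle'' if one can call it that — to be bookkeeping: making sure that when passing to $K^*$ the labels $(B,\wt{B})$, the centers $(y_0,x_0)$, and the two integral conditions all get transposed consistently, and that the single place where a doubling factor is unavoidable (the change of normalising ball from $y_0$ to $x_0$) is precisely accounted for by the factor $C_\mu(A_0(1+\xi))^\mathbb{Q}$ in $\xi^*$. Everything else is a direct substitution using $K^*(x,y)=K(y,x)$ and the symmetry of $d$. I would present the proof as: first the trivial conditions \eqref{eq:kernelpropproof0xdmu}--\eqref{eq:kernelpropproof1xdmu}, then the doubling comparison $\mu(B(y_0,Ar))\approx \mu(B(x_0,Ar))$ with explicit constant, and finally plug this into \eqref{eq:kernelpropproof2xdmu}, \eqref{eq:kernelpropproof3xdmu}, \eqref{eq:kernelpropproof4xdmu} to read off the corresponding statements for $K^*$.
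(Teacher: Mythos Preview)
Your proof is correct and is essentially the paper's own argument: establish the two-sided comparison $\mu(B(y_0,Ar))\approx\mu(B(x_0,Ar))$ with explicit constant $C_\mu(A_0(1+\xi))^{\mathbb{Q}}$ via the quasi-triangle inequality and doubling, then substitute into \eqref{eq:kernelpropproof2xdmu}--\eqref{eq:kernelpropproof4xdmu}. One minor bookkeeping fix (which you yourself anticipate): in the new sextuple the relevant kernel value is $K^*(y_0,x_0)=K(x_0,y_0)$, not $K^*(x_0,y_0)=K(y_0,x_0)$, so your ``more directly'' route is exactly what the paper does and no separate comparison $|K(y_0,x_0)|\approx|K(x_0,y_0)|$ is needed.
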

\begin{proof}
Denote $B=B(y_0,r)$ and $\wt{B}=B(x_0,r)$. We note first that
\[
\frac{1}{C_\mu(A_0(1+\xi ))^\mathbb{Q}}\overset{\eqref{eq:kernelpropproof1xdmu}}{\leq}\frac{\mu(B(y_0,Ar))}{\mu(B(x_0,Ar))}\overset{\eqref{eq:kernelpropproof1xdmu}}{\leq}C_\mu(A_0(1+\xi ))^\mathbb{Q}.
\]
Thus we see that 
\begin{align*}
|K^*(y_0,x_0)|=|K(x_0,y_0)|\overset{\eqref{eq:kernelpropproof2xdmu}}{\leq} \frac{\xi }{\mu(B(y_0,Ar))} \leq \frac{\xi  C_\mu(A_0(1+\xi ))^\mathbb{Q}}{\mu(B(x_0,Ar))},
\end{align*}
and 
\begin{align*}
\frac{1}{\mu(B(x_0,Ar))} \leq \frac{C_\mu(A_0(1+\xi ))^\mathbb{Q}}{\mu(B(y_0,Ar))} &\overset{\eqref{eq:kernelpropproof2xdmu}}{\leq} \xi C_\mu(A_0(1+\xi ))^\mathbb{Q}|K(x_0,y_0)| \\
&= \xi C_\mu(A_0(1+\xi ))^\mathbb{Q}|K^*(y_0,x_0)|.
\end{align*}
Also, thus for all $y\in B$ we have
\begin{align*}
\int_{\wt{B}}|K^*(y,x)-K^*(y_0,x_0)| d \mu(x) &= \int_{\wt{B}}|K(x,y)-K(x_0,y_0)| d \mu(x) \\
&\overset{\eqref{eq:kernelpropproof4xdmu}}{\leq} \xi  \eps \frac{\mu(\wt{B})}{\mu(B(y_0,Ar))} \\
&\leq \xi  C_\mu(A_0(1+\xi ))^\mathbb{Q} \eps \frac{\mu(\wt{B})}{\mu(B(x_0,Ar))}.
\end{align*}
Similarly, for all $x\in \wt{B}$ we thus have
\begin{align*}
\int_{B} |K^*(y,x)-K^*(y_0,x_0)| d \mu(y) &= \int_{B} |K(x,y)-K(x_0,y_0)| d \mu(y) \\
&\overset{\eqref{eq:kernelpropproof3xdmu}}{\leq} \xi  \eps \frac{\mu(B)}{\mu(B(y_0,Ar))} \\
&\leq \xi  C_\mu(A_0(1+\xi ))^\mathbb{Q} \eps \frac{\mu(B)}{\mu(B(x_0,Ar))}.
\end{align*}
\end{proof}

We initiate the approximate weak factorisation process. The decisive difference in the proof of the factorisation, when compared to the Euclidean setting of \cite{Hyt2021}, is that due to the possible non-translativity of the measure $\mu$ we keep ourselves from estimating the coefficient $\mu(B)/\mu(\wt{B})$ further (in particular we let it depend on the ball) until the corresponding constant in the second iteration cancels this coefficient with its reciprocal. Because the distance of the center points of the balls $B$ and $\wt{B}$ is greater than $Ar$, estimating the ratio of their measures further would seem to inevitably produce a coefficient with unwanted or non-necessary dependency on the parameter $A$.

\begin{lemma}\label{lemma:hytonenlemma2.5xdmu}
Suppose that $(K,\xi ,A,\eps,B,\wt{B})$ is admissible, where $B=B(y_0,r)$ and $\wt{B}=B(x_0,r)$. Suppose that $T\in \mathrm{SIO}(K,L^\infty_{\rm bs}(X))$, $T^*\in \mathrm{SIO}(K^*,L^\infty_{\rm bs}(X))$ and $c\geq 1$. If additionally $\eps\leq 2^{-1}c^{-1}\xi ^{-2}$, then the following holds:

Suppose $f\in L_0^\infty(B)$ and $g\in L_+^\infty(\wt{B})$ is such that 
\[
0<\|g\|_{L^\infty(X)}\leq \frac{c}{\mu(\wt{B})}\int_{\wt{B}}g d \mu.
\]
Then there is a decomposition 
\[
f=gTh-hT^*g+\tilde{f},
\]
where $\tilde{f}\in L_0^\infty(\{x\in \wt{B} : g(x)\neq 0\})$ and $h\in L^\infty(\{y\in B : f(y)\neq 0\})$ satisfy 
\[
\|g\|_{L^\infty(X)}\|h\|_{L^\infty(X)}\lesssim_{c,\xi } \frac{\mu(B(y_0,Ar))}{\mu(\wt{B})}\|f\|_{L^\infty(X)}, \ \ \ \|\tilde{f}\|_{L^\infty(X)}\lesssim_{c,\xi }\eps\frac{\mu(B)}{\mu(\wt{B})}\|f\|_{L^\infty(X)}.
\]
\end{lemma}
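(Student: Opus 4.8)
The plan is to follow the approximate weak factorisation idea of \cite{Hyt2021}: given $f\in L_0^\infty(B)$ and the nonnegative ``test function'' $g\in L_+^\infty(\wt B)$, we want to write $f$ (up to an error $\tilde f$) as $gTh-hT^*g$ for a suitable $h$ supported where $f$ lives. Formally, if one could divide by $T^*g$ one would simply set $h:=-f/(T^*g)$, so that $-hT^*g=f$; then adding and subtracting $gTh$ gives $f=gTh-hT^*g-gTh$, and we would \emph{want} to call $-gTh$ the new error $\tilde f$. The point of the lemma is that this heuristic can be made rigorous, with quantitative bounds, precisely because the sextuple is admissible: admissibility guarantees that on $\wt B$, $T^*g$ does not vanish and in fact is comparable to a single constant (the ``frozen'' kernel value $K(x_0,y_0)$ times $\int g$), with a controlled error. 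So the first step is to analyse $T^*g$ on $\wt B$.

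Concretely, for $x\in\wt B$ write, using that $g$ is supported in $B$ (wait --- $g\in L_+^\infty(\wt B)$, $f\in L_0^\infty(B)$, and $d(B,\wt B)\ge r>0$ by \eqref{eq:kernelpropproof0xdmu}, so all the singular integrals below are honest absolutely convergent integrals via Definition \ref{defi:notnecessarilyboundedsio}). For $y\in B$ we have $x\notin\supp(g)$-type separation, so
\[
T^*g(y)=\int_{\wt B}K^*(y,x)g(x)\,d\mu(x)=\int_{\wt B}K(x,y)g(x)\,d\mu(x),
\]
and we compare this to $K(x_0,y_0)\int_{\wt B}g\,d\mu$. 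The difference is bounded by $\int_{\wt B}|K(x,y)-K(x_0,y_0)|\,d\mu(x)\cdot\|g\|_{L^\infty}$, which by \eqref{eq:kernelpropproof4xdmu} is at most $\xi\eps\,\mu(\wt B)\mu(B(y_0,Ar))^{-1}\|g\|_{L^\infty}$. Meanwhile the main term has size $\approx|K(x_0,y_0)|\int_{\wt B}g\,d\mu\gtrsim \xi^{-1}\mu(B(y_0,Ar))^{-1}\cdot c^{-1}\mu(\wt B)\|g\|_{L^\infty}$ using \eqref{eq:kernelpropproof2xdmu} and the hypothesis $\|g\|_{L^\infty}\le c\,\mu(\wt B)^{-1}\int_{\wt B}g\,d\mu$. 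The smallness assumption $\eps\le 2^{-1}c^{-1}\xi^{-2}$ is exactly what makes the error term at most half the main term, so that $|T^*g(y)|\approx_{c,\xi}|K(x_0,y_0)|\int_{\wt B}g\,d\mu$ for \emph{every} $y\in B$, in particular $T^*g$ is bounded away from $0$ on $\supp f\subset B$. This is the crucial nonvanishing input.

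Now define $h:=-f/(T^*g)\cdot\chi_{\{f\ne0\}}$, which lies in $L^\infty(\{y\in B: f(y)\ne 0\})$ with
\[
\|h\|_{L^\infty}\lesssim \frac{\|f\|_{L^\infty}}{|K(x_0,y_0)|\int_{\wt B}g\,d\mu}\lesssim_{c,\xi}\frac{\mu(B(y_0,Ar))}{\mu(\wt B)\|g\|_{L^\infty}}\|f\|_{L^\infty},
\]
which rearranges to the claimed bound on $\|g\|_{L^\infty}\|h\|_{L^\infty}$. Then by construction $-hT^*g=f$ on all of $X$ (both sides vanish off $B$, and on $B$ this is the definition of $h$ where $f\ne 0$, and trivially $0=0$ where $f=0$). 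Set $\tilde f:=f-gTh+hT^*g=-gTh$. It remains to check $\tilde f\in L_0^\infty(\{x\in\wt B:g(x)\ne0\})$ with the stated bound. Support: $\tilde f=-gTh$ is supported in $\supp g\subset\wt B$, and vanishes where $g=0$. Mean zero: $\int_X\tilde f\,d\mu=-\int_X gTh\,d\mu=-\int_X h\,T^*g\,d\mu=\int_X f\,d\mu=0$, where the middle equality is Corollary \ref{cor:tandtstarareadjointsxdmu}(i) applied with the pairing of $h\in L^\infty(B)$, $g\in L^\infty(\wt B)$, $b\equiv1$, using $d(B,\wt B)>0$; and $\int f\,d\mu=0$ since $f\in L_0^\infty(B)$. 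Size: for $x\in\wt B$,
\[
Th(x)=\int_B K(x,y)h(y)\,d\mu(y),
\]
so $|Th(x)|$ is controlled by freezing the kernel: $|Th(x)|\le|K(x_0,y_0)|\,|\int_B h\,d\mu|+\|h\|_{L^\infty}\int_B|K(x,y)-K(x_0,y_0)|\,d\mu(y)$. Here the subtle point --- and I expect this to be the main obstacle, as the paragraph before the lemma already flags --- is that $\int_B h\,d\mu$ need \emph{not} be small (indeed $\int_B h$ could be comparable to $\mu(B)\|h\|_{L^\infty}$ in general). This is why one cannot bound $Th$ directly and must instead exploit cancellation differently. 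The resolution in \cite{Hyt2021} is that $\int_X\tilde f\,d\mu=0$ forces us only to control $\|\tilde f\|_{L^\infty}=\|gTh\|_{L^\infty}$, but actually the correct bound comes from observing that the ``bad'' constant part $K(x_0,y_0)\int_B h$ is in fact cancelled: one rewrites using $\int h\,T^*g\,d\mu=-\int f\,d\mu=0$ together with the freezing of $T^*g$ on $B$. Precisely, $0=\int_B h(y)T^*g(y)\,d\mu(y)=\int_B h(y)\big(K(x_0,y_0)\int_{\wt B}g\,d\mu + E(y)\big)d\mu(y)$ where $|E(y)|\le\xi\eps\mu(\wt B)\mu(B(y_0,Ar))^{-1}\|g\|_{L^\infty}$, hence $K(x_0,y_0)\int_{\wt B}g\,d\mu\cdot\int_B h\,d\mu = -\int_B h E\,d\mu$, giving $|K(x_0,y_0)\int_B h\,d\mu|\le \|h\|_{L^\infty}\mu(B)\cdot\xi\eps\mu(\wt B)\mu(B(y_0,Ar))^{-1}\|g\|_{L^\infty}/(\int_{\wt B}g\,d\mu\cdot|...|)$ --- more cleanly, $|K(x_0,y_0)||\int_B h\,d\mu|\lesssim \xi\eps\,\|h\|_{L^\infty}\mu(B)\,\mu(B(y_0,Ar))^{-1}$ after using $\|g\|_{L^\infty}\le c\mu(\wt B)^{-1}\int_{\wt B}g$. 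Plugging this and \eqref{eq:kernelpropproof3xdmu} back,
\[
|Th(x)|\lesssim_{c,\xi}\eps\,\|h\|_{L^\infty}\frac{\mu(B)}{\mu(B(y_0,Ar))},
\]
so $\|\tilde f\|_{L^\infty}=\|gTh\|_{L^\infty}\le\|g\|_{L^\infty}\|Th\|_{L^\infty}\lesssim_{c,\xi}\eps\,\|g\|_{L^\infty}\|h\|_{L^\infty}\mu(B)\mu(B(y_0,Ar))^{-1}\lesssim_{c,\xi}\eps\frac{\mu(B)}{\mu(\wt B)}\|f\|_{L^\infty}$, using the already-established bound on $\|g\|_{L^\infty}\|h\|_{L^\infty}$. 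That gives exactly the claimed estimate on $\|\tilde f\|_{L^\infty}$, completing the proof. The only genuinely nontrivial bookkeeping, as anticipated in the remark preceding the lemma, is to \emph{refrain} from estimating the ratio $\mu(B)/\mu(\wt B)$ (or $\mu(B(y_0,Ar))/\mu(\wt B)$) any further, since $d(x_0,y_0)\gtrsim Ar$ and any crude doubling estimate would introduce a power of $A$; keeping these ratios as-is is what lets the second iteration (when this lemma is reapplied to $\tilde f$ with the roles of $K,B,\wt B$ swapped via $K^*$) cancel them against their reciprocals.
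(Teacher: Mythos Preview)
Your proof is correct and follows the same approximate weak factorisation strategy as the paper: define $h:=-f/(T^*g)$ on $B$ after showing $T^*g$ is bounded away from zero there, set $\tilde f:=-gTh$, and verify support, mean zero, and size. The construction, the lower bound on $|T^*g|$, the bound on $\|g\|_{L^\infty}\|h\|_{L^\infty}$, and the mean-zero argument for $\tilde f$ all match the paper.

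The one place where you organise things differently is the estimate of $\|Th\|_{L^\infty(\wt B)}$. The paper splits $h=-f(f_1+f_2)$ with $f_2=(K(x_0,y_0)\int_{\wt B}g)^{-1}\chi_B$ and $f_1=(T^*g)^{-1}-f_2$, then bounds $\int K(x,\cdot)ff_1$ via a pointwise estimate on $|f_1|$ and handles $\int K(x,\cdot)ff_2$ by subtracting $K(x_0,y_0)$ and using $\int f=0$. You instead freeze the kernel directly, writing $Th(x)=K(x_0,y_0)\int_B h + \int_B(K(x,\cdot)-K(x_0,y_0))h$, and control $|K(x_0,y_0)\int_B h|$ by the identity $\int_B hT^*g=-\int_B f=0$ combined with $T^*g\approx K(x_0,y_0)\int_{\wt B}g$. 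Your route is slightly shorter, since it avoids the explicit reciprocal-difference bound on $f_1$; the paper's route makes the two sources of smallness (oscillation of $K$ versus cancellation of $f$) a bit more visibly separated. Both give the same final estimate $\|\tilde f\|_{L^\infty}\lesssim_{c,\xi}\eps\,\mu(B)\mu(\wt B)^{-1}\|f\|_{L^\infty}$, and both correctly refrain from estimating the ratios $\mu(B)/\mu(\wt B)$ or $\mu(B(y_0,Ar))/\mu(\wt B)$ further.
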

\begin{proof}
Note that $\supp(f)$ is included in the closure of $B$ and $\supp(g)$ is included in the closure of $\wt{B}$. By \eqref{eq:kernelpropproof0xdmu} it holds that $d(B,\wt{B})>0$. Hence by Lemma \ref{lemma:posdistanceimpliesseparation} we conclude that 
\[
\wt{B}\subset X\setminus \supp(f) \quad \text{ and } \quad B\subset X\setminus \supp(g).
\]
Also, by Lemma \ref{lemma:closureofboundedisbounded} both $f$ and $g$ are in $L^\infty_{\rm bs}(X)$. 

Let $y\in B$. Then $y \notin \supp(g)$ and
\begin{align*}
T^*g(y)&=\int_{\wt{B}}K(x,y)g(x) d \mu(x) \\
&=\underbrace{K(x_0,y_0)\int_{\wt{B}}g(x) d \mu(x)}_{:=I}+\underbrace{\int_{\wt{B}}[K(x,y)-K(x_0,y_0)]g(x) d \mu(x)}_{:=II}.
\end{align*}
Using \eqref{eq:kernelpropproof2xdmu} and our assumption we see that 
\begin{align*}
|I|\geq \xi ^{-1}\frac{1}{\mu(B(y_0,Ar))}\int_{\wt{B}}g d \mu &= \xi ^{-1} \frac{\mu(\wt{B})}{\mu(B(y_0,Ar))}\frac{1}{\mu(\wt{B})}\int_{\wt{B}}g d \mu \\
&\geq c^{-1}\xi ^{-1}\frac{\mu(\wt{B})}{\mu(B(y_0,Ar))}\|g\|_{L^\infty(X)}
\end{align*}
and by \eqref{eq:kernelpropproof4xdmu} we see that
\begin{align*}
|II|\leq\|g\|_{L^\infty(X)}\int_{\wt{B}}|K(x,y)-K(x_0,y_0)| d \mu(x) &\leq \xi \eps\frac{\mu(\wt{B})}{\mu(B(y_0,Ar))}\|g\|_{L^\infty(X)}.
\end{align*}
Thus we have
\begin{align*}
|T^*g(y)|&\geq |I|-|II| \\
&\geq ( c^{-1}\xi ^{-1}-\xi \eps)\frac{\mu(\wt{B})}{\mu(B(y_0,Ar))}\|g\|_{L^\infty(X)}
\end{align*}
for all $y\in B$. We then require $0<\eps\leq 2^{-1}c^{-1}\xi ^{-2}$, as a consequence of which we continue the last estimate to get 
\begin{equation}\label{eq:smartlowerboundfortstarg}
|T^*g(y)|\geq 2^{-1}c^{-1}\xi ^{-1}\frac{\mu(\wt{B})}{\mu(B(y_0,Ar))}\|g\|_{L^\infty(X)}>0
\end{equation}
for all $y\in B$, an estimate that we shall use later.

We define $h=-\frac{f}{T^*g}$ in $B$ and $h=0$ outside $B$. We just showed that this definition does not involve division by zero. Note that $h(x)=0$ outside $B$ and also if $f(x)=0$. Then $h\in L^\infty(\{y\in B : f(y)\neq 0\})$ because from the previous considerations it follows that 
\begin{align*}
\|g\|_{L^\infty(X)}\|h\|_{L^\infty(X)}&\leq \|g\|_{L^\infty(X)}\frac{\|f\|_{L^\infty(X)}}{2^{-1}c^{-1}\xi ^{-1}\frac{\mu(\wt{B})}{\mu(B(y_0,Ar))}\|g\|_{L^\infty(X)}} \\
&\approx_{c,\xi } \frac{\mu(B(y_0,Ar))}{\mu(\wt{B})} \|f\|_{L^\infty(X)}.
\end{align*}
In particular, $h\in L^\infty_{\rm bs}(X)$ by Lemma \ref{lemma:closureofboundedisbounded}.

Define $\tilde{f}=f-gTh+hT^*g$, as we would like to have in the decomposition. Then $\tilde{f}=-gTh$, because $-f=0$ outside $B$. Also, $\tilde{f}=0$ outside $\wt{B}$ and whenever $g=0$.  We noted earlier that $\wt{B}\subset X\setminus \supp(f)$. By definition of $h$, this implies that $\wt{B}\subset X\setminus \supp(h)$. With use of \eqref{eq:propertiesofkernelequaonexdmu} and Corollary \ref{cor:tandtstarareadjointsxdmu} (choose $b\equiv 1$),
\begin{align*}
\int_{X} gTh d \mu = \int_{X} hT^*g d \mu =-\int_{X}f d \mu = 0.
\end{align*}
Thus we get $\int_{X} \tilde{f} d \mu = -\int_{X}gTh d \mu=0$. It remains to show that $\tilde{f}\in L^\infty(X)$.

We note that we can write $h=-f\cdot (f_1+f_2)$. Here $f_1=\frac{1}{T^*g}-\frac{1}{K(x_0,y_0)\int_{\wt{B}}g d \mu}$ in $B$ and $f_1= 0$ outside $B$. Also, $f_2=\frac{1}{K(x_0,y_0)\int_{\wt{B}}g d \mu}$ in $B$ and $f_2= 0$ outside $B$. Note that $f_1f=-h-f_2f\in L^\infty(X)$, because $h,f_2,f\in L^\infty(X)$. Also, $f_1f$ has its support included in the support of $f$.

Fix $x\in\wt{B}$. Recall that $\wt{B} \subset X \setminus \supp(h)$. We get 
\begin{align*}
-Th(x)&=\int_{B}K(x,y)(-h(y)) d \mu(y) \\
&=\int_B K(x,y)f_1(y)f(y) d \mu(y)+\frac{1}{K(x_0,y_0)\int_{\wt{B}}g d \mu}\int_B K(x,y)f(y) d \mu(y),
\end{align*}
where in the last row we used that $x\in X \setminus \supp(f)$. We denote 
\[
I':=\int_B K(x,y)f_1(y)f(y) d \mu(y)
\]
and 
\[
II':=\frac{1}{K(x_0,y_0)\int_{\wt{B}}g d \mu}\int_B K(x,y)f(y) d \mu(y).
\]
For $y\in B$, 
\begin{align*}
&|f_1(y)| \\
&=\bigg|\frac{K(x_0,y_0)\int_{\wt{B}}g d \mu-T^*g(y)}{T^*g(y)K(x_0,y_0)\int_{\wt{B}}g d \mu}\bigg| \\
&\leq \frac{1}{|T^*g(y)K(x_0,y_0)\int_{\wt{B}}g d \mu |}\int_{\wt{B}}|g(x_1)||K(x_0,y_0)-K(x_1,y)| d \mu(x_1) \\ 
&\overset{\eqref{eq:kernelpropproof4xdmu}}{\lesssim_\xi } \frac{1}{|T^*g(y)K(x_0,y_0)\int_{\wt{B}}g d \mu |}\|g\|_{L^\infty(X)}\eps \frac{\mu(\wt{B})}{\mu(B(y_0,Ar))} \\
&\overset{\eqref{eq:kernelpropproof2xdmu},\eqref{eq:smartlowerboundfortstarg}}{\lesssim_{c,\xi }} \eps \frac{\mu(B(y_0,Ar))^2}{\mu(\wt{B})^2}\frac{1}{\|g\|_{L^\infty(X)}^2} \|g\|_{L^\infty(X)} \frac{\mu(\wt{B})}{\mu(B(y_0,Ar))} \\
&=\eps \frac{\mu(B(y_0,Ar))}{\mu(\wt{B})\|g\|_{L^\infty(X)}}
\end{align*}
Hence 
\begin{align*}
|I'|&\lesssim_{c,\xi } \eps \frac{\mu(B(y_0,Ar))}{\mu(\wt{B})\|g\|_{L^\infty(X)}} \int_{B}|K(x,y)||f(y)| d \mu(y) \\
&\leq \eps \frac{\mu(B(y_0,Ar))}{\mu(\wt{B})}\frac{\|f\|_{L^\infty(X)}}{\|g\|_{L^\infty(X)}} \int_{B}|K(x,y)| d \mu(y),
\end{align*}
where 
\begin{align*}
\int_{B}|K(x,y)| d \mu(y)&\leq \int_B |K(x_0,y_0)| d \mu(y) + \int_B |K(x,y)-K(x_0,y_0)| d \mu(y) \\
&\overset{\eqref{eq:kernelpropproof2xdmu},\eqref{eq:kernelpropproof3xdmu}}{\lesssim_\xi } \frac{\mu(B)}{\mu(B(y_0,Ar))} + \eps \frac{\mu(B)}{\mu(B(y_0,Ar))} \\
&\lesssim \frac{\mu(B)}{\mu(B(y_0,Ar))},
\end{align*}
where in the last step we used our requirement for $\eps$ (or we could assume $\eps \leq 1$ anyway because in the end we are concerned with sufficiently small $\eps$). Substituting this estimate back, we get that 
\[
|I'|\lesssim_{c,\xi } \eps \frac{\mu(B)}{\mu(\wt{B})}\frac{\|f\|_{L^\infty(X)}}{\|g\|_{L^\infty(X)}}.
\]
Recalling that $f\in L_0^\infty(B)$, 
\begin{align*}
\big|\int_B K(x,y)f(y) d \mu(y)\big|&=\big|\int_B (K(x,y)-K(x_0,y_0))f(y) d \mu(y)\big| \\
&\leq \|f\|_{L^\infty(X)}\int_B |K(x,y)-K(x_0,y_0)| d \mu(y) \\
&\overset{\eqref{eq:kernelpropproof3xdmu}}{\lesssim_\xi } \eps \frac{\mu(B)}{\mu(B(y_0,Ar))}\|f\|_{L^\infty(X)}
\end{align*}
and thus 
\begin{align*}
|II'|&\lesssim_\xi  \eps \frac{1}{|K(x_0,y_0)\int_{\wt{B}}g d \mu |}\frac{\mu(B)}{\mu(B(y_0,Ar))}\|f\|_{L^\infty(X)} \\
&\overset{\eqref{eq:kernelpropproof2xdmu}}{\lesssim_{c,\xi }} \eps \frac{\mu(B)}{\mu(\wt{B})}\frac{\|f\|_{L^\infty(X)}}{\|g\|_{L^\infty(X)}} .
\end{align*}
Thus immediately 
\begin{align*}
\|-gTh\|_{L^\infty(X)}&\lesssim_{c,\xi } \|g\|_{L^\infty(X)} \eps \frac{\mu(B)}{\mu(\wt{B})}\frac{\|f\|_{L^\infty(X)}}{\|g\|_{L^\infty(X)}} \\
&= \eps \frac{\mu(B)}{\mu(\wt{B})} \|f\|_{L^\infty(X)}.
\end{align*}
The proof is complete.
\end{proof}

We iterate the previous lemma (but just once more for the kernel's transpose) to get the useful property that the error term is supported on the same set as the original function.

\begin{lemma}\label{lemma:hytonenlemma2.6firstversionxdmu}
Suppose that $(K,\xi ,A,\eps,B,\wt{B})$ is admissible, $T\in \mathrm{SIO}(K,L^\infty_{\rm bs}(X))$, $T^*\in \mathrm{SIO}(K^*,L^\infty_{\rm bs}(X))$ and $c\geq 1$. There is a constant $u=u(c,\xi ,C_\mu,A_0)>0$ such that if additionally $\eps\leq u$, then the following holds:

Suppose $E\subset B$ and $\wt{E}\subset \wt{B}$ are such that $\mu(B)\leq c\mu(E)$ and $\mu(\wt{B})\leq c\mu(\wt{E})$. If $f\in L_0^\infty(E)$, there is a decomposition 
\[
f=\sum_{i=1}^2(g_iTh_i-h_iT^*g_i)+\tilde{\tilde{f}},
\]
where $\tilde{\tilde{f}}\in L_0^\infty(E)$, $g_i\in L^\infty(\wt{E})$ and $h_i\in L^\infty(E)$ satisfy 
\begin{equation}\label{eq:lemma2.6xdmu}
\|g_i\|_{L^\infty(X)}\|h_i\|_{L^\infty(X)}\lesssim_{c,\xi ,C_\mu,A_0} A^\mathbb{Q} \|f\|_{L^\infty(X)}, \ \ \ \ \ \ \|\tilde{\tilde{f}}\|_{L^\infty(X)}\lesssim_{c,\xi ,C_\mu,A_0}\eps\|f\|_{L^\infty(X)}.
\end{equation}
Additionally, $g_1=\chi_{\wt{E}}$ and $h_2=\chi_E$.
\end{lemma}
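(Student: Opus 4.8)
The plan is to apply the decomposition of Lemma~\ref{lemma:hytonenlemma2.5xdmu} twice --- first to the admissible sextuple $(K,\xi ,A,\eps,B,\wt{B})$ and then to the transposed sextuple $(K^*,\xi^*,A,\eps,\wt{B},B)$ furnished by the proposition just above --- and to combine the two decompositions. The first call is made with the auxiliary function ``$g$'' chosen to be $g_1:=\chi_{\wt{E}}$: since $\wt{E}\subset\wt{B}$ we have $g_1\in L_+^\infty(\wt{B})$, and $\mu(\wt{B})\leq c\mu(\wt{E})$ gives the required normalisation $0<\|g_1\|_{L^\infty(X)}=1\leq \tfrac{c}{\mu(\wt{B})}\int_{\wt{B}}g_1\,d\mu$. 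Applying Lemma~\ref{lemma:hytonenlemma2.5xdmu} to $f\in L_0^\infty(E)\subset L_0^\infty(B)$ (the smallness requirement on $\eps$ will be absorbed into $u$) yields
\[
f=g_1Th_1-h_1T^*g_1+\tilde f ,
\]
where $\tilde f\in L_0^\infty(\wt{E})$ (because $\{g_1\neq0\}=\wt{E}$), $h_1\in L^\infty(E)$ (because $\{f\neq0\}\subset E$), and
\[
\|g_1\|_{L^\infty(X)}\|h_1\|_{L^\infty(X)}\lesssim_{c,\xi }\frac{\mu(B(y_0,Ar))}{\mu(\wt{B})}\|f\|_{L^\infty(X)},\qquad \|\tilde f\|_{L^\infty(X)}\lesssim_{c,\xi }\eps\,\frac{\mu(B)}{\mu(\wt{B})}\|f\|_{L^\infty(X)}.
\]
From \eqref{eq:kernelpropproof1xdmu} we have $d(x_0,y_0)\leq\xi Ar$, so $B(y_0,Ar)\subset B(x_0,A_0(1+\xi )Ar)$ and, by doubling, $\mu(B(y_0,Ar))/\mu(\wt{B})\lesssim_{C_\mu,A_0,\xi }A^{\QQ}$; hence already $\|g_1\|_{L^\infty(X)}\|h_1\|_{L^\infty(X)}\lesssim_{c,\xi ,C_\mu,A_0}A^{\QQ}\|f\|_{L^\infty(X)}$.

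Next I would decompose $\tilde f$ by a second call to Lemma~\ref{lemma:hytonenlemma2.5xdmu}, now with the admissible sextuple $(K^*,\xi^*,A,\eps,\wt{B},B)$, where $\xi^*=\xi C_\mu(A_0(1+\xi ))^{\QQ}$; here $\tilde f\in L_0^\infty(\wt{E})\subset L_0^\infty(\wt{B})$ plays the role of ``$f$'', the base ball is $\wt{B}$ and the other ball is $B$, the roles of $T$ and $T^*$ are interchanged (as $(K^*)^*=K$), and the auxiliary function is chosen to be $\chi_E\in L_+^\infty(B)$, whose normalisation $\|\chi_E\|_{L^\infty(X)}=1\leq\tfrac{c}{\mu(B)}\int_B\chi_E\,d\mu$ is supplied by $\mu(B)\leq c\mu(E)$. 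The lemma gives
\[
\tilde f=\chi_E\,T^*h'-h'\,T\chi_E+\tilde{\tilde f},
\]
with $\tilde{\tilde f}\in L_0^\infty(\{x\in B:\chi_E(x)\neq0\})=L_0^\infty(E)$ --- regaining the support $E$ is the whole point of iterating once more --- and $h'\in L^\infty(\wt{E})$ (as $\{\tilde f\neq0\}\subset\wt{E}$). Setting $g_2:=-h'$ and $h_2:=\chi_E$, one checks that $g_2Th_2-h_2T^*g_2=\chi_E T^*h'-h'T\chi_E=\tilde f-\tilde{\tilde f}$, so combining the two steps gives
\[
f=\sum_{i=1}^2\big(g_iTh_i-h_iT^*g_i\big)+\tilde{\tilde f},
\]
with $g_1=\chi_{\wt{E}}$, $h_2=\chi_E$, $g_i\in L^\infty(\wt{E})$, $h_i\in L^\infty(E)$ and $\tilde{\tilde f}\in L_0^\infty(E)$, as required.

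The step requiring the most care is the bookkeeping of constants. The second call of Lemma~\ref{lemma:hytonenlemma2.5xdmu} (base ball $\wt{B}=B(x_0,r)$, other ball $B$) gives $\|g_2\|_{L^\infty(X)}\|h_2\|_{L^\infty(X)}=\|h'\|_{L^\infty(X)}\lesssim_{c,\xi^*}\tfrac{\mu(B(x_0,Ar))}{\mu(B)}\|\tilde f\|_{L^\infty(X)}$ and $\|\tilde{\tilde f}\|_{L^\infty(X)}\lesssim_{c,\xi^*}\eps\tfrac{\mu(\wt{B})}{\mu(B)}\|\tilde f\|_{L^\infty(X)}$; inserting the first-step bound $\|\tilde f\|_{L^\infty(X)}\lesssim_{c,\xi }\eps\,\mu(B)/\mu(\wt{B})\,\|f\|_{L^\infty(X)}$ the factor $\mu(B)$ cancels, leaving
\[
\|g_2\|_{L^\infty(X)}\|h_2\|_{L^\infty(X)}\lesssim_{c,\xi ,C_\mu,A_0}\eps\,\frac{\mu(B(x_0,Ar))}{\mu(B(x_0,r))}\|f\|_{L^\infty(X)}\lesssim_{C_\mu}\eps A^{\QQ}\|f\|_{L^\infty(X)},\qquad \|\tilde{\tilde f}\|_{L^\infty(X)}\lesssim_{c,\xi ,C_\mu,A_0}\eps^2\|f\|_{L^\infty(X)} .
\]
Both are dominated by the claimed quantities as soon as $\eps\leq u$, so it suffices to take $u=u(c,\xi ,C_\mu,A_0):=\min\{1,(2c(\xi^*)^2)^{-1}\}$ (the constraints $\eps\leq 2^{-1}c^{-1}(\xi^*)^{-2}$ and $\eps\leq 2^{-1}c^{-1}\xi ^{-2}$ come from the two applications of Lemma~\ref{lemma:hytonenlemma2.5xdmu}, and since $\xi^*$ depends only on $\xi ,C_\mu,A_0$ the bound $u$ is independent of $A$). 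The main obstacle, which is exactly the point emphasised before Lemma~\ref{lemma:hytonenlemma2.5xdmu} and inherited from \cite{Hyt2021}, is to resist estimating $\mu(B)/\mu(\wt{B})$ in the first step so that it is cancelled by its reciprocal arising in the second step --- this is what makes the error term genuinely of size $\eps^2\|f\|_{L^\infty(X)}$ and keeps $u$ (and the implied constants) free of any dependence on $A$.
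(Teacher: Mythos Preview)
Your proof is correct and follows essentially the same two-step iteration as the paper: apply Lemma~\ref{lemma:hytonenlemma2.5xdmu} first to $(K,\xi,A,\eps,B,\wt{B})$ with $g_1=\chi_{\wt E}$, then to $(K^*,\xi^*,A,\eps,\wt B,B)$ with $\chi_E$, and let the $\mu(B)/\mu(\wt B)$ factors cancel between the two steps. The only point the paper spells out that you leave as ``one checks'' is the identity $\chi_E\,T^*(-h')=-\chi_E\,T^*h'$: since $T^*\in\mathrm{SIO}(K^*,L^\infty_{\rm bs}(X))$ is merely a mapping (not assumed linear), this needs the observation that on $E\subset X\setminus\supp(h')$ the operator $T^*$ is given by the kernel integral, where linearity is immediate.
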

\begin{proof}
Suppose $B=B(y_0,r)$ and $\wt{B}=B(x_0,r)$. Suppose 
\[
\eps \leq \min\{2^{-1}c^{-1}\xi ^{-2}, 2^{-1}c^{-1}(\xi^*)^{-2}\} \quad \big(=2^{-1}c^{-1}(\xi^*)^{-2}\big),
\]
where $\xi^*:=\xi C_\mu(A_0(1+\xi))^\mathbb{Q}$.

We first apply Lemma \ref{lemma:hytonenlemma2.5xdmu} to $(K,\xi,A,\eps,B,\wt{B})$, $c$ and the functions $f$ and $g_1:=\chi_{\wt{E}}$. It yields the decomposition 
\[
f=g_1Th_1-h_1T^*g_1+\tilde{f},
\]
where $\tilde{f}\in L_0^\infty(\wt{E})$ and $h_1\in L^\infty(\{y\in B : f(y)\neq 0\})$ satisfy 
\[
\|h_1\|_{L^\infty(X)}\lesssim_{c,\xi} \frac{\mu(B(y_0,Ar))}{\mu(\wt{B})}\|f\|_{L^\infty(X)}
\] 
and
\[
\|\tilde{f}\|_{L^\infty(X)}\lesssim_{c,\xi}\eps\frac{\mu(B)}{\mu(\wt{B})}\|f\|_{L^\infty(X)}.
\]
We may further continue the above estimate for $h_1$ by noting that 
\[
\frac{\mu(B(y_0,Ar))}{\mu(\wt{B})}\lesssim_{C_\mu}A^\mathbb{Q}\frac{\mu(B(y_0,Ar))}{\mu(B(x_0,Ar))}\overset{\eqref{eq:kernelpropproof1xdmu}}{\lesssim_{\xi,C_\mu,A_0}}A^\mathbb{Q}.
\]
Thus 
\[
\|h_1\|_{L^\infty(X)}\lesssim_{c,\xi,C_\mu,A_0}A^\mathbb{Q}\|f\|_{L^\infty(X)}.
\]

We then wish to apply Lemma \ref{lemma:hytonenlemma2.5xdmu} again, but this time to the admissible sextuple 
\[
(K^*,\xi C_\mu(A_0(1+\xi))^\mathbb{Q},A,\eps, \wt{B},B)
\]
and $c$. To this end, consider $\tilde{f}$ and $\tilde{g}:=\chi_E$. Note that because $(K^*)^*=K$, $T^*\in \mathrm{SIO}(K^*,L^\infty_{\rm bs}(X))$ and $T\in \mathrm{SIO}((K^*)^*,L^\infty_{\rm bs}(X))$,  This yields the decomposition 
\[
\tilde{f}=\tilde{g}T^*\tilde{h}-\tilde{h}T\tilde{g}+\tilde{\tilde{f}},
\]
where $\tilde{\tilde{f}}\in L_0^\infty(E)$ and $\tilde{h}\in L^\infty(\{x\in \wt{B} : \tilde{f}(x)\neq 0\})$ satisfy 
\begin{align*}
\|\tilde{h}\|_{L^\infty(X)}\lesssim_{c,\xi,C_\mu,A_0} \frac{\mu(B(x_0,Ar))}{\mu(B)}\|\tilde{f}\|_{L^\infty(X)} &\lesssim_{c,\xi}  \frac{\mu(B(x_0,Ar))}{\mu(B)}\frac{\mu(B)}{\mu(\wt{B})}\|f\|_{L^\infty(X)} \\
&= \frac{\mu(B(x_0,Ar))}{\mu(\wt{B})}\|f\|_{L^\infty(X)} \\
&\lesssim_{C_\mu} A^\mathbb{Q} \|f\|_{L^\infty(X)}
\end{align*}
and
\[
\|\tilde{\tilde{f}}\|_{L^\infty(X)}\lesssim_{c,\xi,C_\mu,A_0}\eps\frac{\mu(\wt{B})}{\mu(B)}\|\tilde{f}\|_{L^\infty(X)}\lesssim_{c,\xi}\eps\frac{\mu(\wt{B})}{\mu(B)}\frac{\mu(B)}{\mu(\wt{B})}\|f\|_{L^\infty(X)}=\eps\|f\|_{L^\infty(X)}.
\]
We define $g_2:=-\tilde{h}\in L^\infty(\{x\in \wt{B} : \tilde{f}(x)\neq 0\})$ and $h_2:=\tilde{g}$. 

Because $f$ vanishes everywhere outside $E$, it holds that 
\[
h_1\in  L^\infty(E).
\]
Using a similar reasoning, it holds that
\[
g_2\in L^\infty(\wt{E}).
\]

We prove that $h_2T^*(-g_2)=-h_2T^*g_2$. For this, note that because $h_2\in L^\infty(E)$, we have for every $x\in X\setminus E$ that
\[
h_2(x)T^*(-g_2)(x)=0=-h_2(x)T^*g_2(x).
\]
On the other hand, because $E\subset X\setminus \supp(g_2)=X\setminus \supp(-g_2)$, we have for every $y\in E$ that 
\[
T^*(-g_2)(y)=-\int_{X}K^*(y,x)g_2(x) d \mu(x)=-T^*(g_2)(y).
\]

Now we see that
\begin{align*}
f&=g_1Th_1-h_1T^*g_1+\tilde{f} \\
&=g_1Th_1-h_1T^*g_1+\tilde{g}T^*\tilde{h}-\tilde{h}T\tilde{g}+\tilde{\tilde{f}} \\
&=g_1Th_1-h_1T^*g_1+h_2T^*(-g_2)+g_2Th_2+\tilde{\tilde{f}} \\
&=g_1Th_1-h_1T^*g_1-h_2T^*g_2+g_2Th_2+\tilde{\tilde{f}} \\
&=\sum_{i=1}^2(g_iTh_i-h_iT^*g_i)+\tilde{\tilde{f}}.
\end{align*}
The proof is complete.
\end{proof}

We apply the factorisation to $\omega$-Calder\'{o}n-Zygmund kernels that satisfy \eqref{eq-1.1opp}, because they are admissible for the factorisation. As a result we prove our main tool (Proposition \ref{prop:oikarisawfformulationforcalderonzygmundxdmu}) to show a necessary condition for the boundedness of commutators.

\begin{proof}[Proof of Proposition \ref{prop:oikarisawfformulationforcalderonzygmundxdmu}]
For our purposes, let us just define $T^*\colon L^1_{\rm bs}(X) \to L^0(X)$ by setting
\[
T^*f(x):=\begin{cases}
    \int_{X} K(y,x) f(y) d \mu(y), &\text{if } x\notin \supp(f), \\
    0, &\text{if } x\in \supp(f).
\end{cases}
\]
Then clearly $T^*\in \mathrm{SIO}(K^*,L^1_{\rm bs}(X))$.

Let $A\geq 2A_0^2+A_0$ and let $\eps_A$ be as in Proposition \ref{prop:propertiesofkernelsxdmu}. By Proposition \ref{prop:propertiesofkernelsxdmu}, there exists a constant $\xi=\xi(c_0,\bar{C},c_K,C_\mu,A_0)$ and a ball $\wt{B}_A$ such that $(K,\xi,A,\eps_A,B,\wt{B}_A)$ is admissible in the sense of Lemma \ref{lemma:hytonenlemma2.6firstversionxdmu} and $d(B,\wt{B}_A)\approx_{\bar{C},A}r$. Let $u=u(c,\xi,C_\mu,A_0)$ be as in Lemma \ref{lemma:hytonenlemma2.6firstversionxdmu}. By Proposition \ref{prop:propertiesofkernelsxdmu} we may choose $A$ so large that 
\[
\eps_A\leq u.
\]
In particular, $A$ may be chosen so that it depends at most on parameters $c$, $c_0$, $\bar{C}$, $c_K$, $C_\mu$, and $A_0$. Let us fix such an $A$ for now and let us simply denote $\wt{B}=\wt{B}_A$. 

Lemma \ref{lemma:hytonenlemma2.6firstversionxdmu} then applies to $(K,\xi,A,\eps_A,B,\wt{B})$, $T$, $T^*$ and $c$. Suppose $E\subset B$ and $\wt{E}\subset \wt{B}$ are such that $\mu(B)\leq c\mu(E)$ and $\mu(\wt{B})\leq c\mu(\wt{E})$. We may write as follows using a complex function $\alpha\in L^0(X)$ such that $|b-\ave{b}_E|=2(b-\ave{b}_E)\alpha$ and $|\alpha|\equiv 1/2$:
\begin{align*}
\int_{E}|b-\ave{b}_E| d \mu = 2\int_E (b-\ave{b}_E) \alpha d \mu &= 2\int_E (b-\ave{b}_E)(\alpha-\ave{\alpha}_E) d \mu \\
&=2\int_E b\underbrace{(\alpha-\ave{\alpha}_E)\chi_E}_{=:f} d \mu \\
&=2\big|\int_E bf d \mu\big|
\end{align*}
where $f\in L_0^\infty(E)$ and $\|f\|_{L^\infty(X)}\leq 1$. We deduced (essentially because $L^\infty$ is the dual of $L^1$) that 
\begin{equation}\label{eq:dualformatofoscillation}
\frac{1}{2}\int_{E}|b-\ave{b}_E| d \mu = \big|\int_{E} bf d \mu\big|.
\end{equation}
We may apply the decomposition of Lemma \ref{lemma:hytonenlemma2.6firstversionxdmu} to write
\[
f=\sum_{i=1}^2(g_iTh_i-h_iT^*g_i)+\tilde{\tilde{f}},
\]
where $\tilde{\tilde{f}}\in L_0^\infty(E)$, $g_i\in L^\infty(\wt{E})$ and $h_i\in L^\infty(E)$ satisfy 
\[
g_1=\chi_{\wt{E}}, \quad h_2=\chi_E, 
\]
\[
\|h_1\|_{L^\infty(X)}\lesssim_{c,c_0,\bar{C},c_K,C_\mu,A_0} \|f\|_{L^\infty(X)}\leq 1, \quad \|g_2\|_{L^\infty(X)}\lesssim_{c,c_0,\bar{C},c_K,C_\mu,A_0} \|f\|_{L^\infty(X)} \leq 1,
\]
and
\[
\|\tilde{\tilde{f}}\|_{L^\infty(X)}\lesssim_{c,c_0,\bar{C},c_K,C_\mu,A_0}\eps_A\|f\|_{L^\infty(X)}\leq \eps_A.
\]
We write
\begin{align*}
\int_E bf d \mu = \int_X bf d \mu &= \sum_{i=1}^2\int_X bg_iTh_i d \mu -\sum_{i=1}^2\int_X bh_iT^*g_i d \mu + \int_X b\tilde{\tilde{f}} d \mu \\
&\overset{\text{Cor. }\ref{cor:tandtstarareadjointsxdmu}}{=} \sum_{i=1}^2\int_X bg_iTh_i d \mu -\sum_{i=1}^2\int_X g_i T(bh_i) d \mu + \int_X b\tilde{\tilde{f}} d \mu \\
&= \sum_{i=1}^2 \int_X g_i(bTh_i-T(bh_i)) d \mu +\int_X b\tilde{\tilde{f}} d \mu \\
&= \sum_{i=1}^2\int_{\wt{E}} g_i[b,T]h_i d \mu + \int_E b\tilde{\tilde{f}} d \mu.
\end{align*}
Plugging this information into \eqref{eq:dualformatofoscillation}, we get
\[
\frac{1}{2}\int_E |b-\ave{b}_E| d \mu \leq \sum_{i=1}^2 \big| \int_{\wt{E}}g_i[b,T]h_i d \mu \big| + \big| \int_E b\tilde{\tilde{f}} d \mu \big|.
\]
Note that since $\int_E \tilde{\tilde{f}} d \mu=0$, 
\begin{align*}
\big| \int_E b\tilde{\tilde{f}} d \mu \big| = \big| \int_E (b-\ave{b}_E)\tilde{\tilde{f}} d \mu \big| &\leq \|\tilde{\tilde{f}}\|_{L^\infty(X)} \int_E |b-\ave{b}_E| d \mu \\
&\leq D(c,c_0,\bar{C},c_K,C_\mu,A_0)\eps_A \int_E |b-\ave{b}_E| d \mu.
\end{align*}
Therefore we have
\begin{align*}
&\frac{1}{2}\int_E |b-\ave{b}_E| d \mu \\
&\leq \sum_{i=1}^2 \big| \int_{\wt{E}}g_i[b,T]h_i d \mu \big| + D(c,c_0,\bar{C},c_K,C_\mu,A_0)\eps_A \int_E |b-\ave{b}_E| d \mu.
\end{align*}
Motivated by this, we further require $A$ to be so big that \[
\eps_A\leq 4^{-1}D(c,c_0,\bar{C},c_K,C_\mu,A_0)^{-1}.
\]
The parameters that $A$ at most depends on remain unchanged. Then we can absorb the oscillation on the right-hand side of the previous inequality to the left-hand side and get 
\[
\frac{1}{4}\int_E |b-\ave{b}_E| d \mu \leq \sum_{i=1}^2 \big| \int_{\wt{E}}g_i[b,T]h_i d \mu \big|.
\]
If we then rename the functions $g_i$ and $h_i$ so that their subscripts denote which $L^\infty(?)$ space they are in, we get the claim.
\end{proof}

By assuming non-degeneracy \eqref{eq-1.1}, we get an analogous consequence of the approximate weak factorisation as follows. The proof is given through Proposition \ref{prop:oikarisawfformulationforcalderonzygmundxdmu} and a duality argument.

\begin{proposition}[Bounding oscillations with \eqref{eq-1.1}]\label{prop:awfformulationforinvnondegcalderonzygmund}
Suppose $K$ is a non-degenerate $\omega$-Calder\'{o}n-Zygmund kernel in $X$ (that is, in the sense of \eqref{eq-1.1}) and suppose $T\in \mathrm{SIO}(K,L^1_{\rm bs}(X))$, $b\in L^1_{\mathrm{loc}}(X)$ and $c\geq 1$. Let $B$ be a ball in $X$ with radius $r$.

Then there exists a ball $\wt{B}$ that has the same radius $r$ as $B$, lies at distance $d(B,\wt{B})\approx r$ from $B$ and the following holds: for any $E\subset B$ and $\wt{E}\subset \wt{B}$ such that $\mu(B)\leq c\mu(E)$ and $\mu(\wt{B})\leq c\mu(\wt{E})$, we have 
\begin{equation}
\int_{E}|b-\ave{b}_E| d \mu \lesssim \big|\int_{E}h_E[b,T]g_{\wt{E}} d \mu \big|+\big|\int_{E} g_E [b,T]h_{\wt{E}} d \mu \big|,
\end{equation}
where the auxiliary functions $g_E,h_E\in L^\infty(E)$ and $g_{\wt{E}},h_{\wt{E}}\in L^\infty(\wt{E})$ satisfy 
\[
g_E=\chi_E, \quad g_{\wt{E}}=\chi_{\wt{E}}, \quad \|h_E\|_{L^\infty(X)}\lesssim 1, \quad \|h_{\wt{E}}\|_{L^\infty(X)}\lesssim 1.
\]
The implied constants depend at most on the kernel parameters $c_0$, $\bar{C}$, $c_K$, the parameters $C_\mu$ and $A_0$ of the space $X$ and on $c$.
\end{proposition}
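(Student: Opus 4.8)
The plan is to reduce this to Proposition~\ref{prop:oikarisawfformulationforcalderonzygmundxdmu} applied to the transposed kernel, and then to convert the commutator pairings for the transposed operator back into pairings for $[b,T]$ itself by means of the adjointness Corollary~\ref{cor:tandtstarareadjointsxdmu}.

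First I would observe that \eqref{eq-1.1} for $K$ is, after merely interchanging the names of the two variables, exactly \eqref{eq-1.1opp} for $K^*(x,y):=K(y,x)$: for every $y\in X$ and $r>0$ there is $x\in B(y,\bar{C}r)\setminus B(y,r)$ with $|K^*(x,y)|=|K(y,x)|\ge (c_0\mu(B(y,r)))^{-1}$. By Lemma~\ref{lemma:adjointkerneliskernel}, $K^*$ is again an $\omega$-Calder\'on-Zygmund kernel, so it meets the hypotheses of Proposition~\ref{prop:oikarisawfformulationforcalderonzygmundxdmu}. I would define $T^*\colon L^1_{\rm bs}(X)\to L^0(X)$ by $T^*f(x)=\int_X K(y,x)f(y) d\mu(y)$ for $x\notin\supp(f)$ and $T^*f(x)=0$ for $x\in\supp(f)$; then $T^*\in\mathrm{SIO}(K^*,L^1_{\rm bs}(X))$, and since $T\in\mathrm{SIO}(K,L^1_{\rm bs}(X))=\mathrm{SIO}((K^*)^*,L^1_{\rm bs}(X))$, the pair $(T,T^*)$ is of the type required by Corollary~\ref{cor:tandtstarareadjointsxdmu}. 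Applying Proposition~\ref{prop:oikarisawfformulationforcalderonzygmundxdmu} to the data $(K^*,T^*,b,c)$ and the ball $B$ yields a ball $\wt{B}$ of radius $r$ with $d(B,\wt{B})\approx r$ such that for all $E\subset B$ and $\wt{E}\subset\wt{B}$ with $\mu(B)\le c\mu(E)$ and $\mu(\wt{B})\le c\mu(\wt{E})$ there are functions $g_E=\chi_E$, $g_{\wt{E}}=\chi_{\wt{E}}$ and $h_E\in L^\infty(E)$, $h_{\wt{E}}\in L^\infty(\wt{E})$ with $\|h_E\|_{L^\infty(X)}\lesssim1$, $\|h_{\wt{E}}\|_{L^\infty(X)}\lesssim1$, and
\[
\int_E|b-\ave{b}_E| d\mu\lesssim\Big|\int_{\wt{E}}g_{\wt{E}}[b,T^*]h_E d\mu\Big|+\Big|\int_{\wt{E}}h_{\wt{E}}[b,T^*]g_E d\mu\Big|.
\]

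Next I would rewrite each pairing on the right in terms of $[b,T]$. Expanding $[b,T^*]h_E=bT^*h_E-T^*(bh_E)$ and recalling $d(B,\wt{B})>0$, I apply Corollary~\ref{cor:tandtstarareadjointsxdmu}(ii) twice: once with the abstract ``$T$'' taken to be $T^*$, which moves $T^*$ off of $h_E$, and once with ``$T$'' taken to be $T$, which moves $T^*$ off of $bh_E$. The hypotheses are met because $h_E$ and $g_{\wt{E}}$ are bounded and supported in the balls $B$ and $\wt{B}$, which are at positive distance, and $b\in L^1_{\mathrm{loc}}(X)$. This gives
\[
\int_{\wt{E}}g_{\wt{E}}[b,T^*]h_E d\mu=\int_E h_E\,T(bg_{\wt{E}}) d\mu-\int_E h_E\,bTg_{\wt{E}} d\mu=-\int_E h_E[b,T]g_{\wt{E}} d\mu,
\]
and, by the symmetric computation, $\int_{\wt{E}}h_{\wt{E}}[b,T^*]g_E d\mu=-\int_E g_E[b,T]h_{\wt{E}} d\mu$. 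Substituting these identities into the displayed inequality yields exactly the claim, with the normalisations $g_E=\chi_E$, $g_{\wt{E}}=\chi_{\wt{E}}$, $\|h_E\|_{L^\infty(X)}\lesssim1$, $\|h_{\wt{E}}\|_{L^\infty(X)}\lesssim1$ inherited verbatim from Proposition~\ref{prop:oikarisawfformulationforcalderonzygmundxdmu}. The implied constants come from Proposition~\ref{prop:oikarisawfformulationforcalderonzygmundxdmu} for the kernel $K^*$ --- whose size constant is controlled by $c_K,C_\mu,A_0$ via Lemma~\ref{lemma:adjointkerneliskernel}, while its non-degeneracy constants coincide with $c_0,\bar{C}$ for $K$ --- together with those of Corollary~\ref{cor:tandtstarareadjointsxdmu}, so they depend at most on $c_0,\bar{C},c_K,C_\mu,A_0,c$.

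All the analytic content is already packaged in Proposition~\ref{prop:oikarisawfformulationforcalderonzygmundxdmu}, so I do not expect a genuine obstacle here; the only thing that needs care --- and the main potential source of slips --- is the bookkeeping in the adjointness step: tracking which concrete operator ($T$ or $T^*$) plays the role of the abstract ``$T$'' in each application of Corollary~\ref{cor:tandtstarareadjointsxdmu}, and checking each time that the two paired functions are supported in $B$ and in $\wt{B}$ respectively, so that the interchange of integration order is licensed. The sign change occurring when the $[b,T^*]$ pairings are rewritten as $[b,T]$ pairings is harmless, since only absolute values of these pairings appear in the conclusion.
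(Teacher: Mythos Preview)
Your proposal is correct and follows essentially the same route as the paper's own proof: observe that \eqref{eq-1.1} for $K$ is precisely \eqref{eq-1.1opp} for $K^*$, define $T^*\in\mathrm{SIO}(K^*,L^1_{\rm bs}(X))$ by the explicit integral formula, apply Proposition~\ref{prop:oikarisawfformulationforcalderonzygmundxdmu} to $(K^*,T^*,b,c,B)$, and then use Corollary~\ref{cor:tandtstarareadjointsxdmu} to rewrite each $[b,T^*]$ pairing as a $[b,T]$ pairing (with the harmless sign change). The bookkeeping you flag in the adjointness step is exactly the only point requiring care, and you have handled it correctly.
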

\begin{proof}
    By the non-degeneracy assumption of $K$ (translated to a statement on $K^*$), it holds that there exist positive constants $c_0$ and $\bar{C}$ such that for every $y \in X$ and $r>0$, there exists $x \in B(y, \bar{C} r) \backslash B(y, r)$ satisfying
    \[
    |K^*(x, y)| \geq \frac{1}{c_0 \mu(B(y, r))}.
    \]
    Thus the $\omega$-Calder\'{o}n-Zygmund kernel $K^*$ satisfies \eqref{eq-1.1opp} with the parameters $c_0$ and $C_0$.

    Let us define $T^*\colon L^1_{\rm bs}(X) \to L^0(X)$ by setting
    \[
    T^*f(x):=\begin{cases}
    \int_{X} K^*(x,y) f(y) d \mu(y), &\text{if } x\notin \supp(f), \\
    0, &\text{if } x\in \supp(f).
    \end{cases}
    \]
    Then clearly $T^*\in \mathrm{SIO}(K^*,L^1_{\rm bs}(X))$. Proposition \ref{prop:oikarisawfformulationforcalderonzygmundxdmu} applies to $K^*$, $T^*$, $b$, $c$ and $B$. Then there exists a ball $\wt{B}$ that has the same radius $r$ as $B$, lies at distance 
    \[
    d(B,\wt{B})\approx r
    \]
    from $B$ and the following holds: for any $E\subset B$ and $\wt{E}\subset \wt{B}$ such that $\mu(B)\leq c\mu(E)$ and $\mu(\wt{B})\leq c\mu(\wt{E})$, we have 
    \begin{align*}
    \int_{E}|b-\ave{b}_E| d \mu \lesssim \Big|\int_{\wt{E}}g_{\wt{E}}[b,T^*]h_E d \mu \Big| 
    +\Big|\int_{\wt{E}}h_{\wt{E}}[b,T^*]g_E d \mu \Big|,
    \end{align*}
    where the auxiliary functions $g_E,h_E\in L^\infty(E)$ and $g_{\wt{E}},h_{\wt{E}}\in L^\infty(\wt{E})$ satisfy 
    \[
    g_E=\chi_E, \quad g_{\wt{E}}=\chi_{\wt{E}}
    \] 
    \[
    \|h_E\|_{L^\infty(X)}\lesssim 1, \quad \|h_{\wt{E}}\|_{L^\infty(X)}\lesssim 1.
    \]
    Note that $T\in \mathrm{SIO}((K^*)^*,L^1_{\mathrm{bs}}(X))$, because $(K^*)^*=K$. Because $d(B,\wt{B})>0$, Corollary \ref{cor:tandtstarareadjointsxdmu} about the adjointness of $T$ and $T^*$ applies, and we get 
    \begin{align*}
    \int_{\wt{E}}g_{\wt{E}}[b,T^*]h_E d \mu &=\int_X T^*h_E \cdot b\cdot g_{\wt{E}} d \mu - \int_X g_{\wt{E}} \cdot T^*(bh_E) d \mu \\
    &= \int_X h_E \cdot T(bg_{\wt{E}}) d \mu - \int_X Tg_{\wt{E}} \cdot b\cdot h_E d \mu \\
    &=-\int_{E}h_E[b,T]g_{\wt{E}} d \mu
    \end{align*}
    and similarly
    \begin{align*}
    \int_{\wt{E}}h_{\wt{E}}[b,T^*]g_E d \mu=-\int_{E}g_E[b,T]h_{\wt{E}} d \mu.
    \end{align*}
    Substituting these to our upper bound on the oscillation, we get
    \begin{align*}
    \int_{E}|b-\ave{b}_E| d \mu \lesssim \Big|\int_{E}h_E[b,T]g_{\wt{E}} d \mu \Big| 
    +\Big|\int_{E}g_E[b,T]h_{\wt{E}} d \mu \Big|.
    \end{align*}
    \end{proof}

We then demonstrate how to get the lower bound for the commutator norm when one assumes non-deneracy \eqref{eq-1.1}. An analogous lower bound may be derived when condition \eqref{eq-1.1opp} is assumed. The proof is similar and we only note that the dependence on the weight constants in the upper bound of $\|b\|_{\BMO_\nu^\alpha(X)}$ will then be $[\lambda_1]_{A_{p,p}}[\lambda_2]_{A_{q,q}}^2$, as opposed to $[\lambda_1]_{A_{p,p}}^2[\lambda_2]_{A_{q,q}}$. We leave deriving that to the interested reader.

\begin{theorem}\label{thm:commutatorlowerboundawfversion}
Suppose $K$ is a non-degenerate $\omega$-Calder\'{o}n-Zygmund kernel in $X$ (that is, in the sense of \eqref{eq-1.1}), $T\in \mathrm{SIO}(K,L^1_{\rm bs}(X))$ and $b\in L^1_{\mathrm{loc}}(X)$. Suppose $1<p,q<\infty$, $\lambda_1\in A_{p,p}$ and $\lambda_2\in A_{q,q}$. Then if there exists a constant $\Theta\in(0,\infty)$ so that for all $f\in L^\infty_{\rm bs}(X)$ we have
\[
\|[b,T]f\|_{L^q_{\lambda_2}(X)}\leq \Theta\|f\|_{L^p_{\lambda_1}(X)},
\]
then $b\in \BMO_\nu^\alpha(X)$ and
\[
\|b\|_{\BMO_\nu^\alpha(X)}\lesssim \Theta[\lambda_1]_{A_{p,p}}^2[\lambda_2]_{A_{q,q}},
\]
where $\alpha/\QQ=1/p-1/q$ and $\nu=(\lambda_1 / \lambda_2)^\frac{1}{1/p+1/q'}$.

The implied constant depends at most on parameters $c_0$, $\bar{C}$, $c_K$, $C_\mu$ and $A_0$.
\end{theorem}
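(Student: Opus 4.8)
The plan is to reduce the estimate, ball by ball, to a single application of Proposition~\ref{prop:awfformulationforinvnondegcalderonzygmund}, followed by a weighted Hölder inequality, the boundedness hypothesis, and the two weight comparisons of Lemma~\ref{lemmaS2}. Fix a ball $B=B(y_0,r)$ and apply Proposition~\ref{prop:awfformulationforinvnondegcalderonzygmund} with $c=1$, taking $E=B$ and $\wt{E}=\wt{B}$ (legitimate since $\mu(B)\le\mu(B)$ and $\mu(\wt{B})\le\mu(\wt{B})$). This yields a ball $\wt{B}=B(x_0,r)$ with $d(B,\wt{B})\approx r$ and functions $h_B\in L^\infty(B)$, $h_{\wt{B}}\in L^\infty(\wt{B})$ with $\|h_B\|_{L^\infty(X)},\|h_{\wt{B}}\|_{L^\infty(X)}\lesssim 1$ such that
\[
\int_{B}|b-\ave{b}_B| d \mu \lesssim \Big|\int_{B}h_B[b,T]\chi_{\wt{B}} d \mu \Big|+\Big|\int_{B}\chi_B[b,T]h_{\wt{B}} d \mu \Big|.
\]

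I would then bound each of the two terms on the right. Since $\chi_{\wt{B}}$ and $h_{\wt{B}}$ both lie in $L^\infty_{\rm bs}(X)$, the commutator hypothesis applies to them. For the first term, bounding $|h_B|\lesssim\chi_B$, inserting $\lambda_2\lambda_2^{-1}$, applying Hölder with exponents $q$ and $q'$, and then the boundedness hypothesis gives
\[
\Big|\int_{B}h_B[b,T]\chi_{\wt{B}} d \mu \Big|\lesssim \|[b,T]\chi_{\wt{B}}\|_{L^q_{\lambda_2}(X)}\,\lambda_2^{-q'}(B)^{1/q'}\le \Theta\,\lambda_1^p(\wt{B})^{1/p}\,\lambda_2^{-q'}(B)^{1/q'},
\]
where $\lambda_2^{-q'}(B)<\infty$ because $\lambda_2^q\in A_q$, and $\|\chi_{\wt{B}}\|_{L^p_{\lambda_1}(X)}=\lambda_1^p(\wt{B})^{1/p}$. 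The same computation with $h_{\wt{B}}$ in place of $\chi_{\wt{B}}$ bounds the second term by the same quantity (using $\|h_{\wt{B}}\|_{L^\infty(X)}\lesssim 1$). Hence $\int_{B}|b-\ave{b}_B| d \mu \lesssim \Theta\,\lambda_1^p(\wt{B})^{1/p}\,\lambda_2^{-q'}(B)^{1/q'}$.

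It remains to pass from $\wt{B}$ to $B$ and then quote Lemma~\ref{lemmaS2}. Since $\wt{B}$ has the same radius $r$ as $B$ and the centres satisfy $d(x_0,y_0)\approx r$, both $B$ and $\wt{B}$ lie in a fixed dilate $B(y_0,\Lambda r)$ with $\Lambda$ depending only on $A_0$ and the constant implicit in $d(B,\wt{B})\approx r$; as $\lambda_1^p\in A_p$, the doubling estimate for $A_p$ weights recorded after Definition~\ref{defi:apweightsmuckenhoupt} gives $\lambda_1^p(\wt{B})\le\lambda_1^p(B(y_0,\Lambda r))\lesssim [\lambda_1]_{A_{p,p}}^p\,\lambda_1^p(B)$, that is $\lambda_1^p(\wt{B})^{1/p}\lesssim [\lambda_1]_{A_{p,p}}\,\lambda_1^p(B)^{1/p}$ (the $1/p$ power neutralises the $p$-th powers, so no dependence on $p$ enters the implied constant). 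Feeding in the upper bound $\lambda_1^p(B)^{1/p}\lambda_2^{-q'}(B)^{1/q'}\le [\lambda_1]_{A_{p,p}}[\lambda_2]_{A_{q,q}}\,\nu(B)^{1+\alpha/\QQ}$ from Lemma~\ref{lemmaS2}, I obtain
\[
\frac{1}{\nu(B)^{1+\alpha/\QQ}}\int_{B}|b-\ave{b}_B| d \mu \lesssim \Theta\,[\lambda_1]_{A_{p,p}}^2\,[\lambda_2]_{A_{q,q}},
\]
and taking the supremum over all balls $B$ yields $b\in\BMO_\nu^\alpha(X)$ with the claimed bound, the implied constant depending only on $c_0,\bar{C},c_K,C_\mu,A_0$.

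I do not anticipate a real obstacle: all of the analytic content — the approximate weak factorisation and the use of the non-degeneracy condition \eqref{eq-1.1} — has already been packaged into Proposition~\ref{prop:awfformulationforinvnondegcalderonzygmund}. The points needing a little care are checking that the auxiliary functions are admissible inputs for the commutator bound, confirming finiteness of $\lambda_2^{-q'}(B)$ and $\lambda_1^p(\wt{B})$ from the Muckenhoupt conditions, and keeping the weight bookkeeping tight so as to land precisely on $[\lambda_1]_{A_{p,p}}^2[\lambda_2]_{A_{q,q}}$ — the asymmetry (an extra power of $[\lambda_1]_{A_{p,p}}$) being exactly the price of the doubling step that transfers mass from $\wt{B}$ back onto $B$.
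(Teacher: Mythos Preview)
Your proposal is correct and follows essentially the same route as the paper's own proof: apply Proposition~\ref{prop:awfformulationforinvnondegcalderonzygmund} with $c=1$, $E=B$, $\wt{E}=\wt{B}$, bound each pairing by weighted H\"older plus the commutator hypothesis to land on $\Theta\,\lambda_1^p(\wt{B})^{1/p}\lambda_2^{-q'}(B)^{1/q'}$, then transfer $\wt{B}$ to $B$ via the $A_p$-doubling of $\lambda_1^p$ and finish with Lemma~\ref{lemmaS2}. Your observation that the extra factor $[\lambda_1]_{A_{p,p}}$ is exactly the cost of the doubling step matches the paper's remark preceding the theorem.
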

\begin{proof}
Let $B=B(y_0,r)$ be a ball in $X$. Applying Proposition \ref{prop:awfformulationforinvnondegcalderonzygmund} with $c=1$ and $B$, we get a ball $\wt{B}=B(x_0,r)$ that lies at distance $d(B,\widetilde{B})\approx r$ from $B$ and satisfies the property stated in that proposition. In particular, then there exist functions $h_B$ and $h_{\wt{B}}$ as in that proposition such that
\begin{align*}
\int_B|b-\ave{b}_B| d \mu &\lesssim \big|\langle [b,T]\chi_{\wt{B}},h_B \rangle \big|+\big|\langle [b,T]h_{\wt{B}},\chi_B \rangle \big| \\
&= \big|\langle \lambda_2[b,T]\chi_{\wt{B}},h_B\lambda_2^{-1} \rangle \big|+\big|\langle \lambda_2[b,T]h_{\wt{B}},\chi_B\lambda_2^{-1} \rangle \big| \\
&\leq \|[b,T]\chi_{\wt{B}}\|_{L^q_{\lambda_2}(X)}\|h_B\|_{L^{q'}_{\lambda_2^{-1}}(X)} + \|[b,T]h_{\wt{B}}\|_{L^q_{\lambda_2}(X)} \|\chi_B\|_{L^{q'}_{\lambda_2^{-1}}(X)} \\
&\leq \Theta\bigg(\|\chi_{\wt{B}}\|_{L^p_{\lambda_1}(X)}\|h_B\|_{L^{q'}_{\lambda_2^{-1}}(X)}+\|h_{\wt{B}}\|_{L^p_{\lambda_1}(X)}\|\chi_B\|_{L^{q'}_{\lambda_2^{-1}}(X)}\bigg) \\
&\lesssim \Theta\bigg(\|\chi_{\wt{B}}\|_{L^p_{\lambda_1}(X)}\|\chi_B\|_{L^{q'}_{\lambda_2^{-1}}(X)}+\|\chi_{\wt{B}}\|_{L^p_{\lambda_1}(X)}\|\chi_B\|_{L^{q'}_{\lambda_2^{-1}}(X)}\bigg) \\
&\lesssim \Theta \lambda_1^p(\wt{B})^\frac{1}{p}\lambda_2^{-q'}(B)^\frac{1}{q'}.
\end{align*}
It remains to show that $\lambda_1^p(\wt{B})^\frac{1}{p}\lambda_2^{-q'}(B)^\frac{1}{q'}$ can be bounded by $\nu(B)^{1+\alpha/\QQ}$ with an implicit constant depending only on the parameters it is allowed to depend on.

Because $\lambda_1\in A_{p,p}$, we have $\lambda_1^p\in A_p$. Note that for some constant $c' > 1$, it holds that
\[
\wt{B}\subset B(y_0,c'r)
\]
and
\[
B\subset B(y_0,c'r).
\]
Then by the doubling property of $\lambda_1^p$ (see the remark after Definition \ref{defi:apweightsmuckenhoupt}), we get
\[
\lambda_1^p(\wt{B})^\frac{1}{p} \leq \lambda_1^p(B(y_0,c'r))^\frac{1}{p} \lesssim [\lambda_1^p]_{A_p}^\frac{1}{p} \lambda_1^p(B)^\frac{1}{p} = [\lambda_1]_{A_{p,p}} \lambda_1^p(B)^\frac{1}{p}.
\]
Inputting this estimate to our first inequality chain, we get
\[
\int_B|b-\ave{b}_B| d \mu \lesssim \Theta [\lambda_1]_{A_{p,p}} \lambda_1^p(B)^\frac{1}{p}\lambda_2^{-q'}(B)^\frac{1}{q'}.
\]
Thus the claim follows from Lemma \ref{lemmaS2}.
\end{proof}

From subsection \ref{commutsdefinition}, it follows that under the assumptions of Theorem \ref{thm2}, it holds that 
\[
T\in \mathrm{SIO}(K,L^1_{\rm bs}(X)).
\]
Indeed, Theorem \ref{thm2} is thus a corollary to Theorem \ref{thm:commutatorlowerboundawfversion}.

\section{Discussion}\label{discussion}
We come back to the topic briefly discussed in the introduction. Namely, we present alternative assumptions for the $\mu$-measurable sets in the definition of a space of homogeneous type $(X, d, \mu)$.  So, the substance of this discussion is to inform the interested reader of certain alternative assumptions. We will leave the following question open:
\begin{align*}
\textbf{Question: } &\text{Do our main results Theorems \ref{thm1} and \ref{thm2} remain valid } \\
&\text{under any of the two weaker assumptions presented shortly?}
\end{align*}
In what follows, we explain why it is likely that the answer to the above question is positive.

As is to be expected by now, we suppose for this discussion that $X$ is a set $d$ is a quasi-metric in $X$, $\mathfrak{M}$ is a $\sigma$-algebra in $X$ that contains all balls and that $\mu \colon \mathfrak{M} \to  [0,\infty]$ is a doubling measure that satisfies $\mu(X)=\infty$ and $\mu(\{x_0\})=0$ for every $x_0\in X$. As explained, we are interested in the conditions one can set on $\mathfrak{M}$.

Let $\mathrm{Bor}_d(X)$ be the Borel $\sigma$-algebra in $X$, that is, the smallest $\sigma$-algebra in $X$ that contains all open sets in $X$. It is known (based on the $\sigma$-finiteness of $X$ and the geometric doubling property (Lemma \ref{lemma:geometricdoubling}) which in turn follows from the assumed doubling property of $\mu$) that each open set is a countable union of balls. Thus our minimal assumption that balls are in $\mathfrak{M}$ in fact already implies that 
\begin{equation}\label{eq:readilyborelaremeas}
    \mathrm{Bor}_d(X) \subset \mathfrak{M}.
\end{equation} 

Recall that $E\bigtriangleup B$ means the symmetric difference of sets $E$ and $B$. To summarise the following discussion, we have the following three natural condition candidates for our $\mu$-measurable sets in the definition of a space of homogenous type:
\begin{itemize}
    \item[(1)] $\mathfrak{M} = \mathrm{Bor}_d(X)$;
    \item[(2)] For every $E\in \mathfrak{M}$, there exists $B\in \mathrm{Bor}_d(X)$ with the property that $E\subset B$ and $\mu(E) = \mu(B)$;
    \item[(3)] For every $E\in \mathfrak{M}$, there exists $B\in \mathrm{Bor}_d(X)$ with the property that $\mu(E\bigtriangleup B) = 0$ (see \cite{AM2015} by Alvarado and Mitrea).
\end{itemize}
Note that (2) and (3) are well-defined because of \eqref{eq:readilyborelaremeas}. Condition (1) is the one we adopt in this paper, as seen in the introduction. It is easy to see that $(1) \Rightarrow (2) \Rightarrow (3)$. Thus (3) is the least restrictive of the three conditions. We do not show whether or not definitions of a space of homogeneous type based on (2) and (3) are different. Also, we are not aware of a related reference to point to. The typical Euclidean setting on Borel sets versus on Lebesgue sets is the obvious example that shows definitions based on (1) and (2) are different.

Condition (1) is what many authors might enunciate as ``$\mu$ is a Borel measure''. On the other hand, many authors would use this exact phrase to mean instead that $\mathrm{Bor}_d(X) \subset \mathfrak{M}$. It is probably safe to proclaim that this double meaning has lead to many misunderstandings in communication in general throughout history. Be as it may, condition (1) is often used in conjunction with spaces of homogeneous type. The reason to assume condition (1) might be to avoid technicalities and thus leave the modifications needed for a more general assumption to the interested reader. Condition (1) leaves out natural candidates such as the Euclidean setting $(\R^n, \|\cdot\|_{\R^n}, \mathrm{Leb}(\R^n))$, where $\mathrm{Leb}(\R^n)$ is the collection of Lebesgue-measurable sets.

Condition (2) is often referred to as ``$\mu$ is Borel-regular''. This is also an often used assumption in conjunction with spaces of homogeneous type.

Condition (3) is taken from \cite{AM2015}. It is there referred to as ``$\mu$ is Borel-semiregular''. Under this condition on $\mu$, a sharp version of Lebesgue's differentiation theorem (\cite[Theorem 3.14]{AM2015}), in a sense, is proved whenever the quasi-metric possesses certain regularity. A conclusion then is that the Borel-semiregularity of $\mu$ is necessary for the condition that for every $f\in L^1_\mathrm{loc}(X)$,
\begin{equation}\label{eq:lebdifthmexample}
\lim_{r\to 0^+} \frac{1}{\mu(B(x,r))}\int_{B(x,r)} f(y) d \mu(y) = f(x)
\end{equation}
is true for $\mu$-almost every $x\in X$; for quasi-metrics with certain regularity. In fact, it is contained in \cite[Theorem 3.14]{AM2015} that Borel-semiregularity is also sufficient for \eqref{eq:lebdifthmexample} (and its typical stronger form) to hold; again, for quasi-metrics with certain regularity. For the skipped details, we refer to \cite{AM2015}. Condition (3) is perhaps not as widely known in the context of spaces of homogeneous type as conditions (1) and (2) are. Hence particular background theory under condition (3) might be more difficult to find than that based on condition (2), for example.

Lebesgue's differentiation theorem plays an important role in singular integral theory. Namely, when making the Calder\'{o}n-Zygmund decomposition $f=g+b$, a decomposition of the upper level set of the maximal function is made. Lebesgue's differentiation theorem guarantees then that $g$ is bounded outside the upper level set by averages taken over neighbourhoods of the outside points. Lebesgue's differentiation theorem also plays a role in sparse estimates of singular integrals and their commutators (see e.g. \cite[Lemma 3.1]{LOR2017}). By experience, Lebesgue's differentiation theorem seems to be the part of the background theory of our main results that requires the most regularity from measurable sets. This is why it is likely that the answer to the above question is positive, at least if $d$ possesses certain regularity.

What if the quasi-metric $d$ does not possess certain regularity? It is likely that this case can be handled by reducing to regular quasi-metrics by equivalence. Namely, $d$ is equivalent to a regular quasi-metric that satisfies the stronger form of \eqref{eq:lebdifthmexample} and all balls in that quasi-metric are in $\mathfrak{M}$. So, by switching to a larger ball in the regular quasi-metric with comparable average, the stronger form of \eqref{eq:lebdifthmexample} for the regular quasi-metric implies the stronger form of \eqref{eq:lebdifthmexample} for $d$. We conclude that Borel-semiregularity (a property of $X$ which is invariant under equivalent quasi-metrics) allows us to use \eqref{eq:lebdifthmexample} in any space of homogeneous type $(X,d,\mu)$ defined through condition (3). However, because of reasons related to the measurability of the Hardy-Littlewood maximal function, it might be more comfortable to use the equivalence of $d$ to a regular quasi-metric at some other point in the argument rather than at that point where Lebesgue's differentiation theorem is used. For our main results, the equivalence can probably be used at the very end (cf. the metamathematical principle in \cite{Ste2015}). By this, we mean proving the whole theorem for regular quasi-metrics first. This is why it is likely that the answer to the above question is positive, even if $d$ does not possess certain regularity.

\section*{Acknowledgements}
{J.S. thanks Tuomas Hyt\"onen for helpful and inspiring discussions on the approximate weak factorisation.

J.L. is supported by ARC DP 220100285. J.S. was supported by the Research Council of Finland through project 346314 (to Tuomas Hyt\"{o}nen; this project is part of the Finnish Centre of Excellence in Randomness and Structures) and projects 336323, 358180 (to Timo H\"{a}nninen). Z. G. is supported by Research Fund of Southwest University of Science and Technology 23zx7167.
}

\bibliographystyle{plain}
\bibliography{manuscriptreferences}
\end{document}